\documentclass[11pt]{amsart}
\usepackage{geometry}
\usepackage{graphicx}
\usepackage{amssymb}
\usepackage{epstopdf}
\usepackage{amsmath,amscd}
\usepackage{amsthm}
\usepackage{enumerate}
\usepackage{url,verbatim}
\usepackage{subfig}
\usepackage{enumitem}

\RequirePackage[colorlinks,citecolor=blue,urlcolor=blue]{hyperref}
\usepackage{breakurl}
\theoremstyle{plain}
\textwidth=430pt
\textheight=620pt
\DeclareGraphicsRule{.tif}{png}{.png}{`convert #1 `dirname #1 `/`basename #1 .tif`.png}
\setlength{\parindent}{1.5em}

\newtheorem{theorem}{Theorem}
\newtheorem{definition}[theorem]{Definition}
\newtheorem{lemma}[theorem]{Lemma}
\newtheorem{proposition}[theorem]{Proposition}

\newcommand\ol{\overline}
\newcommand\Aut{\mathrm{Aut}}
\newcommand \sA{\mathcal{A}}

\newcommand\RR{{\mathbb R}}

\newcommand\HH{{\mathbb H}}

\renewcommand\ell{l}


\newcommand\bS{\mathbb{S}}

\newcounter{mycount}

\numberwithin{equation}{section}
\numberwithin{theorem}{section}
\numberwithin{figure}{section}

\title{Ising Percolation on Nonamenable Planar Graphs}

\date{}

\author{Zhongyang Li}
\address{Department of Mathematics,
University of Connecticut,
Storrs, Connecticut 06269-3009, USA}
\email{zhongyang.li@uconn.edu}
\urladdr{\url{https://mathzhongyangli.wordpress.com}}

\begin{document}
\maketitle

\begin{abstract}We study infinite ``$+$'' or ``$-$'' clusters for an Ising model on an connected, transitive, non-amenable, planar, one-ended graph $G$ with finite vertex degree. If the critical percolation probability $p_c^{site}$ for the i.i.d.~Bernoulli site percolation on $G$ is less than $\frac{1}{2}$, we find an explicit region for the coupling constant of the Ising model such that there are infinitely many infinite ``$+$''-clusters and infinitely many infinite ``$-$''-clusters, while the random cluster representation of the Ising model has no infinite 1-clusters. If $p_c^{site}>\frac{1}{2}$, we obtain a lower bound for the critical probability in the random cluster representation of the Ising model in terms of $p_c^{site}$.
\end{abstract}

\section{Introduction}

\subsection{Percolation}
A percolation model on a graph $G=(V(G),E(G))$ is a probability measure on the sample space $\Omega$ consisting of all the subsets of vertices of $G$, that is,
\begin{eqnarray*}
\Omega=\{0,1\}^{V(G)}
\end{eqnarray*}
Each subset of vertices is called a percolation configuration. A vertex $v\in V(G)$ has state 1 in the configuration if it is included in the subgraph; otherwise it has state 0 in the configuration.

Let $\omega\in \Omega$. A cluster in $\omega$ is a maximal connected set of vertices in which each vertex has the same state in $\omega$. A cluster is a 0-cluster (resp.\ 1-cluster) if every vertex in it has state 1 (resp.\ state 0).  A cluster is finite (resp.\ infinite) if there are finitely many vertices (resp.\ infinitely many vertices) in the cluster. We say that percolation occurs in $\omega$ if there exists an infinite 1-cluster in $\omega$. 

One of the most studied percolation models is the Bernoulli($p$) site percolation, where $p\in[0,1]$. The Bernoulli($p$) site percolation on $G$ is a random subset of $V(G)$, such that each vertex is in the percolation subset with probability $p$ independently. The critical probability for Bernoulli($p$) site percolation model on $G$ is defined by
\begin{eqnarray*}
p_c^{site}(G):=\inf\{p\in[0,1]: \mathrm{Bernoulli}(p)\ \mathrm{site\ percolation\ on}\ G\ \mathrm{has\ an\ infinite}\ 1-\mathrm{cluster\ a.~s.}\}.
\end{eqnarray*}
See \cite{GP} for background on percolation.

\subsection{Graph Theory}
Let $G$ be a graph with automorphism group $\Aut(G)$. The graph $G$ is called vertex-transitive, or transitive, if there exists a subgroup $\Gamma\subseteq \mathrm{Aut}(G)$, such that all the vertices are in the same orbit under the action of $\Gamma$ on $G$. The graph $G$ is called quasi-transitive if there exists a subgroup $\Gamma\subseteq \mathrm{Aut}(G)$, such that all the vertices are in finitely many different orbits under the action of $\Gamma$ on $G$. 

Assume $\Gamma\subseteq \mathrm{Aut}(G)$ acts on $G$ quasi-transitively. We say the action of $\Gamma$ on $G$ is unimodular if for any $u,v\in V(G)$ in the same orbit of $\Gamma$,
\begin{eqnarray*}
|\mathrm{Stab}_u(v)|=|\mathrm{Stab}_v(u)|.
\end{eqnarray*}
where $\mathrm{Stab}_u$ is the subgroup of $\Gamma$ defined by
\begin{eqnarray*}
\mathrm{Stab}_u:=\{\gamma\in \Gamma: \gamma(u)=u\}.
\end{eqnarray*}

A graph $G$ is called non-amenable if
\begin{eqnarray}
\inf_{K\subseteq V(G), |K|<\infty}\frac{|\partial_{E}K|}{|K|}>0,\label{am}
\end{eqnarray}
where $\partial_{E} K$ consists of all the edges in $E(G)$ that have exactly one endpoint in $K$ and one endpoint not in $K$. If the left-hand side of (\ref{am}) is equal to 0, then the graph $G$ is called amenable.

 A manifold $M$ is plane if every self-avoiding cycle splits it into two parts. Examples of plane includes the 2D sphere $\bS^2$, the Euclidean plane $\RR^2$ and the hyperbolic plane $\HH^2$. See \cite{CFKP} for background on hyperbolic geometry. We say the graph $G$ is planar if it can be embedded in the plane, i.e., it can be drawn on the plane in such a way that its edges intersect only at their endpoints. We say that an embedded graph $G\subset M$ in $M$ is properly embedded if every compact subset of $M$ contains finitely many vertices of $G$ and intersects finitely many edges.
 
 The number of ends of a connected graph is the supremum over its finite subgraphs of the number of
infinite components that remain after removing the subgraph. The number of ends and planarity of a graph is closely related to properties of statistical mechanical models on the graph; see \cite{ZLSAW}, for example, about the effects of the number of ends of a graph on the speed of self-avoiding walks; see \cite{GrL}, about the effects of planarity of a graph on the number of self-avoiding walks.
 
 \subsection{Ising Model and Random-Cluster Model}
 
 The random cluster measure $RC:=RC_{p,q}^{G_0}$ on a finite graph $G_0=(V_0,E_0)$ with parameters $p\in[0,1]$ and $q\geq 1$ is the probability measure on $\{0,1\}^{E_0}$ which assigns probability
\begin{eqnarray}
RC(\xi):\propto q^{k(\xi)}\prod_{e\in E_0}p^{\xi(e)}(1-p)^{1-\xi(e)}.\label{drc}
\end{eqnarray}
 to each $\xi\in \{0,1\}^{E_0}$, where $k(\xi)$ is the number of connected components in $\xi$.

Let $G=(V,E)$ be an infinite, connected, locally finite graph. For each $q\in[1,\infty)$ and each $p\in (0,1)$, let $WRC_{p,q}^G$ be the random cluster measure with the wired boundary condition, and let $FRC_{p,q}^G$ be the random cluster measure with the free boundary condition. More precisely, $WRC_{p,q}^G$  (resp.\ $FRC_{p,q}^G$) is the weak limit of $RC$'s defined by (\ref{drc}) on larger and larger finite subgraphs approximating $G$, where all the edges outside each finite subgraph are required to have state 1 (resp.\ state 0).

When there is no confusion, we may write $FRC_{p,q}^G$ and $WRC_{p,q}^G$ as $FRC_{p,q}$ and $WRC_{p,q}$ for simplicity.
Assume that $G$ is transitive. Then measures $FRC_{p,q}$ and $WRC_{p,q}$ are $\mathrm{Aut}(G)$-invariant, and $\mathrm{Aut}(G)$-ergodic; see Page 295 of \cite{SR01} for explanations.

If we further assume that $G$ is unimodular, nonamenable and planar,
it is known that there exists $p_{c,q}^{w},\ p_{c,q}^{f},\ p_{u,q}^{w},\ p_{c,q}^{f}\in[0,1]$, such that $FRC_{p,q}$-a.s.~the number of infinite clusters equals
\begin{eqnarray}
\left\{\begin{array}{cc}0&\mathrm{if}\ p\leq p_{c,q}^f\\ \infty&\mathrm{if}\ p\in (p_{c,q}^f,p_{u,q}^f)\\1&\mathrm{if}\ p> p_{u,q}^f; \end{array}\right.\label{frc}
\end{eqnarray}
and $WRC_{p,q}$-a.s.~the number of infinite clusters equals
\begin{eqnarray}
\left\{\begin{array}{cc}0&\mathrm{if}\ p< p_{c,q}^w\\ \infty&\mathrm{if}\ p\in (p_{c,q}^w,p_{u,q}^w)\\1&\mathrm{if}\ p\geq p_{u,q}^w.\end{array}\right.\label{wrc}
\end{eqnarray}
see expressions (17),(18), Theorem 3.1 and Corollary 3.7 of \cite{HJL02}. 

The Ising model and the random cluster model when $q=2$ can be coupled in the following way.

\begin{lemma}\label{l213}Fix an infinite locally finite graph $G=(V(G),E(G))$, and $p\in[0,1]$. Consider the Ising model with sample space $\{\pm 1\}^{V(G)}$, and coupling constant $J\geq 0$ on each edge. Let $\mu^{f}$ (resp. $\mu^+$, $\mu^-$) be the infinite volume Gibbs measure of the Ising model with free boundary conditions (resp. ``$+$''-boundary conditions, ``$-$''-boundary conditions). Let  $\omega\in \{0,1\}^{E(G)}$ be a random edge configuration on $G$. For  each connected component $C$ of $\omega$,  pick a spin uniformly from $\{\pm 1\}$,  and assign this spin  to  all  vertices  of $C$.   Do  this  independently  for  different  connected  components of $\omega$. Then we obtain a $\{\pm 1\}^{V(G)}$-valued random spin configuration $\sigma$.
\begin{enumerate}[label=(\Alph*)]
\item If  $\omega$ is distributed according to $FRC_{p,2}$, then $\sigma$ distributed according to the Gibbs measure $\mu^f$ for the Ising model on $G$ with coupling constant $J:=-\frac{1}{2}\log(1-p)$.
\item If  $\omega$ is distributed according to  $WRC_{p,2}$, then $\sigma$ distributed  according to the Gibbs measure $\frac{\mu^++\mu^-}{2}$for the Ising model on $G$ with coupling constant $J:=-\frac{1}{2}\log(1-p)$.
\end{enumerate}
\end{lemma}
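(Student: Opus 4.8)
The plan is to establish the Edwards--Sokal coupling at the level of finite graphs and then pass to the infinite-volume limit, tracking the boundary conditions through the limit. First I would recall the finite-graph version: on a finite graph $G_0=(V_0,E_0)$ with the random-cluster measure $RC_{p,2}^{G_0}$, if one independently assigns a uniform spin from $\{\pm1\}$ to each connected component of a sample $\omega$, the resulting spin configuration $\sigma$ is distributed according to the Ising measure on $G_0$ at inverse temperature determined by $J=-\tfrac12\log(1-p)$, equivalently $p=1-e^{-2J}$. This is the classical Edwards--Sokal identity, proved by writing the joint measure on pairs $(\sigma,\omega)$ proportional to $\prod_{e}\big[(1-p)\id_{\{\omega(e)=0\}}+p\,\id_{\{\omega(e)=1\}}\id_{\{\sigma_x=\sigma_y\}}\big]$, and checking that its two marginals are exactly the Ising and random-cluster measures; summing out $\sigma$ gives a factor $2^{k(\omega)}$ from the free choice of spin on each of the $k(\omega)$ components, reproducing \eqref{drc} with $q=2$, while summing out $\omega$ edge-by-edge gives the Ising weight $\prod_e e^{J\id_{\{\sigma_x=\sigma_y\}}}$ up to normalization.

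Next I would handle the boundary conditions. For part (A), the free random-cluster measure $FRC_{p,2}^G$ is the weak limit of $RC_{p,2}^{G_n}$ along an exhausting sequence $G_n\uparrow G$ with free (state-$0$) boundary, i.e.\ with no edges outside $G_n$. Applying the finite-graph coupling on each $G_n$ and then passing to the limit, one must check that the spin-assignment map $\omega\mapsto\sigma$ is sufficiently continuous for the weak limit of the coupled measures to exist and to have the claimed marginals; since the state of a given vertex $v$ depends on $\omega$ only through which component of $\omega$ contains $v$ together with one extra fair coin, and since in the free limit finite components stabilize, the limiting spin marginal is the free Gibbs state $\mu^f$ (this is where one invokes that $\mu^f$ is itself the weak limit of finite free-boundary Ising measures). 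For part (B), the wired random-cluster measure is the limit of $RC_{p,2}^{G_n}$ with all exterior edges set to state $1$, which has the effect of identifying the entire boundary $\partial G_n$ into a single ``ghost'' component; assigning that ghost component a uniform spin in $\{\pm1\}$ and all other components independent uniform spins is exactly the $\tfrac12$-$\tfrac12$ mixture of wiring the boundary to $+$ and wiring it to $-$. Passing to the limit, the boundary component becomes infinite and carries a single global fair coin, so the spin marginal converges to $\tfrac12(\mu^++\mu^-)$, using that $\mu^{\pm}$ are the weak limits of the $\pm$-boundary finite Ising measures and that the infinite wired cluster is the unique infinite cluster a.s.\ (so there is no ambiguity about ``which'' infinite component gets the global coin).

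The main obstacle is the limiting argument for part (B), specifically justifying that the single fair coin attached to the wired boundary in $G_n$ survives as a single global fair coin in the limit, rather than producing a measure in which distinct infinite clusters could in principle receive independent coins. This requires knowing that $WRC_{p,2}^G$-a.s.\ the infinite clusters, if more than one, nonetheless arise from a single wired boundary component in the finite approximations, so that conditionally on $\omega$ all of them share one spin; equivalently one argues that the conditional law of $\sigma$ given $\omega$ in the limit assigns the same spin to every vertex lying in an ``$\infty$-wired'' component, and a fresh independent fair spin to each finite component. One clean way to carry this out is to identify $\tfrac12(\mu^++\mu^-)$ directly: this mixture is the limit of finite Ising measures with the boundary pinned uniformly to a common value, which under the finite Edwards--Sokal coupling is precisely the wired random-cluster measure with the boundary forced into one component receiving that common spin, and then invoke uniqueness of weak limits. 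I would also remark on measurability of the spin-assignment (it is a measurable function of $\omega$ and an auxiliary i.i.d.\ sequence of fair coins indexed by components), and note that $J\ge 0$ corresponds to $p\in[0,1)$ via $p=1-e^{-2J}$, with the degenerate cases $p=0$ ($J=0$) and $p\to1$ handled trivially.
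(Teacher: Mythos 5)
The paper does not actually prove this lemma; it cites Propositions 2.2 and 2.3 of \cite{HJL02}, and your proposal is essentially a reconstruction of the standard Edwards--Sokal argument used there: the finite-graph coupling identity followed by a weak-limit argument that tracks the boundary condition. The finite-volume identity and part (A) are sound as sketched, modulo the routine verification that the connectivity functions $FRC^{G_n}_{p,2}(u\leftrightarrow v)$ converge to $FRC^{G}_{p,2}(u\leftrightarrow v)$, which follows from stochastic monotonicity in $n$ together with the fact that the cylinder events $\{u\leftrightarrow v\ \text{within}\ G_n\}$ increase to $\{u\leftrightarrow v\ \text{in}\ G\}$.

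The genuine gap is in part (B), precisely at the point you flag as the ``main obstacle,'' because your proposed resolution leans on a false fact: you invoke that ``the infinite wired cluster is the unique infinite cluster a.s.'' In the setting of this paper ($G$ nonamenable and planar), $WRC_{p,2}$ has infinitely many infinite clusters a.s.\ for $p\in(p^w_{c,2},p^w_{u,2})$ by (\ref{wrc}), so uniqueness cannot be assumed. Once several infinite clusters can coexist, the recipe ``independent uniform spins on all components'' and the recipe ``independent uniform spins on finite components, one common uniform spin shared by all infinite components'' give different spin laws: under the first, $E[\sigma_u\sigma_v]=WRC_{p,2}(u\leftrightarrow v)$, whereas the wired finite-volume limit gives $E_{\frac12(\mu^++\mu^-)}[\sigma_u\sigma_v]=\lim_n WRC^{G_n}_{p,2}(u\leftrightarrow v)=WRC_{p,2}(u\leftrightarrow v)+WRC_{p,2}(u\not\leftrightarrow v,\ u\ \text{and}\ v\ \text{both in infinite clusters})$, and the two disagree whenever distinct infinite clusters occur with positive probability. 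What your limiting argument actually delivers (and what \cite{HJL02} states) is the second recipe: the single fair coin attached to the ghost boundary component survives as one common coin for \emph{all} infinite clusters, while finite clusters get fresh independent coins. You should prove that corrected version rather than the literal statement with fully independent coins; part (A) does not suffer from this issue, since in the free finite-volume approximations every cluster is finite and gets its own coin, which is consistent with independent coins on the (possibly infinite) clusters of $FRC_{p,2}$ in the limit.
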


\begin{proof}See Propositions 2.2 and 2.3 of \cite{HJL02}.
\end{proof}

\subsection{XOR Ising Model}

An XOR Ising model on $G$ is a probability measure on $\sigma_{XOR}\in \{\pm 1\}^{V(G)}$, such that 
\begin{eqnarray*}
\sigma_{XOR}(v)=\sigma_1(v)\sigma_2(v),\qquad\forall v\in V(G),
\end{eqnarray*}
where $\sigma_1$, $\sigma_2$ are two i.i.d.\ Ising models with spins located on $V(G)$.

The XOR Ising model was first introduced in \cite{DB11} whose contours are conjectured to have conformally invariant distribution
 at criticality. Contours in the XOR Ising model on the 2D Euclidean square grid are later proved to have the same distribution as that of level lines of the dimer model on the square hexagon lattice ([4,8,8] lattice) embedded into the Euclidean plane $\RR^2$, see \cite{BT12}. The percolation properties for the XOR Ising model on the square grid, hexagonal lattice and triangular lattice in $\RR^2$ were studied in \cite{HL,ZL17}. The percolation properties for the XOR Ising model on the non-amenable triangular lattice in the hyperbolic plane $\HH^2$ were studied in \cite{HL,ZL17}. In this paper, we shall study the percolation properties for the XOR Ising model on non-amenable, vertex-transitive, planar graphs with one end.

\subsection{Main Results}

The main goal of this paper is to study percolation properties of Ising models in the hyperbolic plane. More precisely, we consider when there exists an infinite ``$+$''-cluster or an infinite ``$-$''-cluster for a random Ising configuration on a graph embedded into the hyperbolic plane.

Ising models in the hyperbolic plane were first studied in \cite{SS90}, where for large values of the inverse temperature an uncountable number of mutually singular Gibbs states were constructed, in contrast to the Ising model in the Euclidean plane for which any Gibbs measure is a convex combination of two extremal ones (see \cite{Ai80}) and there is a unique phase transition whose critical temperature can be explicitly identified (see \cite{Li12,CD13}). A related result was proved later in \cite{Wu1, Wu2}, more precisely, under certain temperature, the Gibbs measure with free boundary condition is not the average of the Gibbs measure with ``$+$''-boundary condition and the Gibbs measure with ``$-$''-boundary condition for the Ising model on regular tilings of the hyperbolic plane. A major tool to study the Ising model is the random cluster model (see \cite{FK72,SW87,EDS88,ACCN88}). The random cluster model in the hyperbolic plane was studied in \cite{HJL02} with applications to the Ising and Potts models. In this paper, we shall compare the percolation properties of the Ising model in the hyperbolic plane with the percolation properties of its random cluster representation, and prove the surprising result that percolation does not always occur simultaneously in the two coupled models. Similar result was proved when the Ising model is defined on a vertex-transitive triangular tiling of the hyperbolic plane; see \cite{HL,ZL17}. By applying the new techniques recently developed in \cite{ZL20}, we now extend the result to Ising models on all the connected, locally finite, vertex-transitive, non-amenable and planar graphs with one end.

We use $G^+$ to denote the dual graph of a planar graph $G$. More precisely, each vertex of $G^+$ corresponds to a face of $G$; two vertices of $G^+$ are joined by an edge in $G^+$ if and only if their corresponding faces in $G$ share an edge.
To each site configuration $\omega\in \{0,1\}^{V(G)}$, we associate a bond configuration $\phi^+_{\omega}\in \{0,1\}^{E(G^+)}$ such that for each dual edge $e^+\in E(G^+)$, $\phi^+_{\omega}(e^{+})=1$ if and only if the edge $e\in E(G)$ (dual edge of $e^+$) joins two endpoints with different states in $\omega$. 

\begin{definition}Let $G=(V,E)$ be a graph. Given a set $A\in 2^V$, and a vertex $v\in V$, denote $\Pi_{v}A=A\cup\{v\}$. For $\sA\subset 2^V$, we write $\Pi_v\sA=\{\Pi_v A: A\in \mathcal{A}\}$. A site percolation process $(\mathbf{P},\omega)$ on $G$ is insertion-tolerant if $\mathbf{P}(\Pi_v\sA)>0$ for every $v\in V$ and every event $\sA\subset 2^V$ satisfying $\mathbf{P}(\sA)>0$.

 A site percolation is deletion-tolerant if $\mathbf{P}[\Pi_{\neg v}\sA]>0$ whenever $v\in V$ and $\mathbf{P}(\sA)>0$, where $\Pi_{\neg v}A=A\setminus\{v\}$ for $A\in 2^{V}$, and $\Pi_{\neg v}\sA=\{\Pi_{\neg v} A: A\in \mathcal{A}\}$.
\end{definition}

It is straightforward to check that the Ising percolation with any coupling constant $J\in \RR$ is both deletion tolerant and insertion tolerant.

.
\begin{theorem}\label{m1}Let $G$ be a connected, vertex-transitive, locally finite, planar, nonamenable graph with one end. Consider an $\mathrm{Aut}(G)$-invariant, $\mathrm{Aut}(G)$-ergodic, insertion-tolerant and deletion-tolerant site percolation $\omega\in\{0,1\}^{V(G)}$ with distribution $\mu$. Let $s_0$ (resp.\ $s_1, k^+$) be the total number of infinite 0-clusters in $\omega$(resp.\ infinite 1-clusters in $\omega$, infinite contours in $\phi^+_{\omega}$), then
\begin{eqnarray*}
(s_0,s_1,k^+)\in\{(0,0,1),(\infty,0,\infty),(0,\infty,\infty),(0,1,0),(1,0,0),(\infty,\infty,\infty)\}\ a.s.
\end{eqnarray*}

\end{theorem}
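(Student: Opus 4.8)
The plan is to combine three classical inputs: the Burton--Keane type uniqueness argument for indistinguishable clusters under insertion/deletion tolerance, the topological duality between infinite site clusters and infinite dual contours in a one-ended plane graph, and the trichotomy $\{0,\infty,1\}$ for the number of infinite clusters of a $\Aut(G)$-invariant, ergodic, insertion-tolerant percolation on a transitive nonamenable planar graph (the analogue of (\ref{frc})--(\ref{wrc}), which goes back to the Benjamini--Lyons--Peres--Schramm / Häggström--Jonasson--Lyons framework cited via \cite{HJL02}). Since $\omega$ is insertion-tolerant, ergodic and invariant, $s_1\in\{0,1,\infty\}$ a.s.; since $\omega$ is \emph{deletion}-tolerant, the \emph{complement} configuration $1-\omega$ is insertion-tolerant, invariant and ergodic, so $s_0\in\{0,1,\infty\}$ a.s. as well. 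So the first step reduces the problem to ruling out the forbidden combinations and pinning down $k^+$ in each surviving case.

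The key geometric step is the interface/contour dictionary on a one-ended planar graph. First I would recall that for a properly embedded one-ended plane graph $G$, a face-boundary contour in $\phi^+_\omega$ (a cycle in $G^+$ through the dual edges separating opposite spins) is exactly a connected component of the topological boundary between the $0$-region and the $1$-region of $\omega$; a \emph{finite} contour bounds a disk, and an \emph{infinite} contour is a bi-infinite self-avoiding path in $G^+$ whose two sides are an infinite piece of a $0$-cluster and an infinite piece of a $1$-cluster. One-endedness is what forbids two disjoint infinite contours from ``pinching off'' a finite region, and more importantly it forces: if there is at least one infinite $0$-cluster \emph{and} at least one infinite $1$-cluster, then there is at least one infinite contour, i.e. $k^+\ge 1$; conversely an infinite contour forces both an infinite $0$-cluster and an infinite $1$-cluster on its two sides. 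This already kills $(\infty,0,\cdot)$ with $\cdot\neq$ what we claim and, combined with the next step, will force $k^+=\infty$ whenever both $s_0$ and $s_1$ are positive. Conversely, if $s_0=0$ (no infinite $0$-cluster) then every contour is finite, bounding a finite set of $0$'s, so $k^+=0$; symmetrically if $s_1=0$ then $k^+=0$; and if $s_0=s_1=0$ there are no vertices at all in the ``wrong'' state past infinity, a degenerate case that cannot occur together with $k^+=0$ unless one of $s_0,s_1$ is in fact $1$ — this is how the entries $(0,1,0)$ and $(1,0,0)$ (and the trivial $(0,0,1)$ handling the all-spins-equal-at-infinity picture) arise.

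The remaining work is to upgrade ``$\ge 1$'' to ``$=\infty$'' and to exclude $(1,0,0)$-type anomalies where, say, $s_0=1$ and $s_1=1$. For the first: suppose $s_0,s_1\ge 1$ but $1\le k^+<\infty$. By invariance and ergodicity $k^+$ is a.s.\ constant; a finite nonzero number of infinite contours is itself an $\Aut(G)$-invariant finite nonempty set of bi-infinite objects on a nonamenable transitive graph, and the standard ``no invariant finite nonempty set of infinite geodesic-like objects'' argument (mass-transport / the fact that the pointwise stabilizer of such a finite set has infinite index issues on a nonamenable group, exactly as in the BLPS treatment of infinite clusters) yields a contradiction — alternatively, insertion-tolerance lets one locally merge or split spins near one contour to change $k^+$ by a finite nonzero amount on a positive-probability event, contradicting a.s.\ constancy, as in Burton--Keane. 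This forces $k^+\in\{0,\infty\}$, and then $k^+=\infty$ in the both-positive case. For excluding the bad combinations: if $s_1=\infty$ then there are infinitely many infinite $1$-clusters, each of which, being separated from the $0$-region, contributes at least one infinite contour, forcing $k^+=\infty$ \emph{and} $s_0$ must then also be consistent — but if $s_0\ge1$ we'd also need $s_0=\infty$ by symmetry of the same argument applied across each contour? No: here the one-ended planarity is decisive — two infinite $1$-clusters must be separated along an infinite contour by an infinite \emph{piece of $0$}, but distinct such separating strands, if $s_0$ were $1$, would all have to belong to that single infinite $0$-cluster, which is possible; the actual exclusion of $(\infty,1,\infty)$ and $(1,\infty,\infty)$ comes from applying to $1-\omega$ the same trichotomy, forcing $s_0\in\{0,1,\infty\}$, \emph{and} from a parity/planarity count showing that a single infinite cluster of one type cannot be adjacent (across contours) to infinitely many infinite clusters of the other type on a one-ended plane graph without itself splitting. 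I would formalize this last point with a mass-transport argument: transport mass from each infinite contour to the infinite $0$-cluster and the infinite $1$-cluster bordering it; if $s_0=1$ and $k^+=\infty$ the single infinite $0$-cluster receives infinite mass while each contour sends out finite mass, and a suitable normalization via the nonamenable isoperimetry gives the contradiction — this is precisely the kind of estimate provided by the techniques of \cite{ZL20}.

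The main obstacle I expect is exactly this last coupling between the \emph{number} of infinite contours and the \emph{numbers} $s_0,s_1$: getting from the soft statements ``$s_0,s_1,k^+\in\{0,1,\infty\}$ separately'' to the precise list requires the planar one-ended topology to rule out mixed cases like $(1,\infty,\infty)$, and the clean way to do that is an ergodic-theoretic indistinguishability/mass-transport argument tailored to infinite contours rather than to clusters. I would model it on the cluster-indistinguishability theorem, replacing ``cluster'' by ``pair (infinite $0$-cluster, infinite $1$-cluster) glued along an infinite contour'' and using insertion- and deletion-tolerance to perform the local modifications; verifying that these modifications stay within the measure class (which is where both tolerances, stated just before the theorem, are used) is the technical heart. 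Everything else — the trichotomy, the finite-contour-bounds-a-disk fact, the constancy of $k^+$ — is either quoted from \cite{HJL02} or follows from transitivity and ergodicity.
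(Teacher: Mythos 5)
Your reduction of $s_0,s_1$ to $\{0,1,\infty\}$ and the idea of using deletion/insertion tolerance on $\omega$ and $1-\omega$ is fine, and your instinct that tolerance-based ``splitting'' arguments pin down $k^+$ is in the right spirit. But the ``interface/contour dictionary'' at the centre of your argument is false, and it contradicts the very statement you are proving. A contour in $\phi^+_{\omega}$ is a maximal \emph{connected} set of dual edges separating unequal spins; an infinite contour need not have an infinite $0$-cluster and an infinite $1$-cluster on its two sides --- it can be the connected union of the boundaries of infinitely many \emph{finite} clusters (this is exactly what happens in the admissible triple $(0,0,1)$, where all clusters are finite yet there is one infinite contour). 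Likewise your claim ``if $s_0=0$ then every contour is finite, so $k^+=0$'' is refuted by the admissible triples $(0,0,1)$ and $(0,\infty,\infty)$: when $s_1=\infty$, the infinitely many infinite $1$-clusters are separated by dual contours that are forced to be infinite even though no single $0$-cluster is infinite. Since these two claims are what you use to ``kill'' cases and to derive the entries $(0,1,0)$, $(1,0,0)$, $(0,0,1)$, the case analysis collapses. You also leave the exclusion of $(1,1)$, $(1,\infty)$, $(\infty,1)$ and the upgrade from $k^+\ge 1$ to $k^+=\infty$ as sketched mass-transport/indistinguishability arguments that you yourself flag as the technical heart, without supplying them.

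The paper's route avoids all of this. The mixed pairs $(1,1),(1,\infty),(\infty,1)$ are eliminated by a quoted result (Lemma \ref{n1f}, Theorem 1.6 of \cite{ZL20}): insertion-tolerance plus a unique infinite $0$-cluster forbids infinite $1$-clusters, and dually. The value of $k^+$ is then controlled not by a cluster--contour adjacency dictionary but by the Benjamini--Schramm duality for invariant bond configurations (Lemma \ref{l27}): with $k=s_0+s_1$ the number of infinite contours of the ``equal-spin'' bond configuration $\phi_{\omega}$, one has $(k,k^+)\in\{(1,0),(0,1),(1,\infty),(\infty,1),(\infty,\infty)\}$ a.s., which immediately gives $(0,0)\Rightarrow k^+=1$ and narrows the remaining cases to $k^+\in\{0,1,\infty\}$. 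The residual possibilities $k^+=1$ when $k=\infty$, and $k^+=\infty$ when $(s_0,s_1)=(0,1)$ or $(1,0)$, are excluded by explicit splitting constructions on the interface graph $\overline{G}^{+}$ (Lemmas \ref{l43}--\ref{l45}), which use insertion- or deletion-tolerance to cut one infinite object into two and contradict almost-sure constancy. If you want to salvage your proposal, you need to replace your duality claims with Lemma \ref{l27} (or prove an equivalent statement) and carry out the splitting arguments in detail rather than deferring them.
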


Applying Theorem \ref{m1} on the Ising percolation, we obtain the following results. 

\begin{theorem}\label{ipl}Let $G$ be a connected, vertex-transitive, locally finite, planar, nonamenable graph with one end. Let $d\geq 3$ be the vertex degree of $G$. Assume that the critical site percolation probability on $G$ satisfies
\begin{eqnarray}
p_c^{site}(G)<\frac{1}{2}.\label{psh}
\end{eqnarray}
Consider the Ising model with spins located on vertices of $G$ and coupling constant $J\in \geq$ on each edge. Let $\omega\in \{\pm 1\}^{V(G)}$ be an Ising configuration.
\begin{enumerate}
\item Let $h>0$ satisfy
\begin{eqnarray}
\frac{e^{-h}}{e^{h}+e^{-h}}=p_c^{site}(G)\label{pch}
\end{eqnarray}
  Let $\mu^+$ (resp.\ $\mu^-$. $\mu^f$) be the infinite-volume Ising Gibbs measure with ``$+$''-boundary conditions (resp.\ ``$-$'' boundary conditions, free boundary conditions).  If 
 \begin{eqnarray}
 0\leq J<\frac{h}{d},\label{jh} 
 \end{eqnarray}
 then $\mu$-a.s. there are infinitely many infinite ``$+$''-clusters and infinitely many infinite ``$-$''-clusters in $\omega$, and infinitely many infinite contours in $\phi^+_{\omega}$, where $\mu$ is an arbitrary $\mathrm{Aut}(G)$-invariant Gibbs measure for the Ising model on $G$ with coupling constant $J$.
   \item  Assume $J\geq 0$. If one of the following conditions 
 \begin{enumerate}[label=(\alph*)]
\item  $\mu^f$ is $\mathrm{Aut}(G)$-ergodic; 
\item $\mathrm{inf}_{u,v\in V(G)}\langle \sigma_{u}\sigma_{v}\rangle_{\mu^f}=0$, where $\sigma_{u}$ and $\sigma_{v}$ are two spins associated to vertices $u,v\in V(G)$ in the Ising model; 
\item $0\leq J<\frac{1}{2}\ln\left(\frac{1}{1-p_{u,2}^{f}}\right)$, where $p_{u,2}^{f}$ is the critical probability for the existence of a unique infinite open cluster of the corresponding random cluster representation of the Ising model on $G$, with free boundary conditions as given in (\ref{frc});  
\end{enumerate} 
  holds, then $\mu^f$-a.s. there are infinitely many infinite ``$+$''-clusters and infinitely many infinite ``$-$''-clusters. Indeed, we have $(c)\Rightarrow(b)\Rightarrow (a)$.
  \end{enumerate}

\end{theorem}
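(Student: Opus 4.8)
The plan is to derive both parts from Theorem~\ref{m1}, whose three model-dependent hypotheses we must verify for the relevant Ising Gibbs measure: $\mathrm{Aut}(G)$-invariance is a choice of measure, insertion- and deletion-tolerance hold for every Ising Gibbs measure (as already noted in the text, via finite energy), so $\mathrm{Aut}(G)$-ergodicity is the only delicate point. The combinatorial observation used throughout is that among the six admissible triples $(s_0,s_1,k^+)$ of Theorem~\ref{m1}, the only one with $s_0\ge1$ and $s_1\ge1$ is $(\infty,\infty,\infty)$ (which then also has $k^+=\infty$), and the only ones with $s_0=s_1$ are $(0,0,1)$ and $(\infty,\infty,\infty)$.

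For part~(1), the first step is a stochastic-domination lemma: under \emph{any} Gibbs measure $\mu$ for the ferromagnetic Ising model with coupling $J\ge0$, both $\{v:\sigma_v=+1\}$ and $\{v:\sigma_v=-1\}$ stochastically dominate Bernoulli($\rho$) site percolation on $G$ with $\rho=\frac{e^{-Jd}}{e^{Jd}+e^{-Jd}}$. This follows from Holley's inequality applied on finite volumes — the ferromagnetic specification is monotone and each single-site conditional probability of a $+1$ (resp.\ $-1$) spin is at least $\rho$, the extreme case being that all $d$ neighbours have the opposite sign — followed by the infinite-volume limit and the spin-flip symmetry of the interaction. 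A one-line computation with (\ref{pch}) shows that (\ref{jh}) is exactly the inequality $\rho>p_c^{site}(G)$, so Bernoulli($\rho$) a.s.\ percolates, and therefore $\mu$-a.s.\ there is an infinite ``$+$''-cluster and an infinite ``$-$''-cluster, i.e.\ $s_0\ge1$ and $s_1\ge1$. To apply Theorem~\ref{m1} I would then pass to the extreme points of the Choquet simplex of $\mathrm{Aut}(G)$-invariant Gibbs measures: each such extreme point is ergodic and is itself an Ising Gibbs measure (hence insertion/deletion-tolerant), and, since $\{s_0\ge1\}$ and $\{s_1\ge1\}$ are $\mathrm{Aut}(G)$-invariant events of full $\mu$-measure, they have full measure under almost every extreme point; Theorem~\ref{m1} then pins each extreme point, and hence $\mu$, at $(\infty,\infty,\infty)$.

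For part~(2), write $p=1-e^{-2J}$, the value for which $J=-\tfrac12\log(1-p)$. Condition~(c) reads $p<p_{u,2}^f$, so by (\ref{frc}) the measure $FRC_{p,2}$ is not in its uniqueness regime; since $\langle\sigma_u\sigma_v\rangle_{\mu^f}=FRC_{p,2}(u\leftrightarrow v)$ by Lemma~\ref{l213}(A) and $G$ is transitive, the absence of a unique infinite cluster forces $\inf_{u,v}\langle\sigma_u\sigma_v\rangle_{\mu^f}=0$, which is~(b). The implication (b)$\Rightarrow$(a) is a standard fact about the free state of a ferromagnetic Ising model: vanishing of the two-point function, together with $\langle\sigma_u\rangle_{\mu^f}=0$, forces clustering — e.g.\ via the Lebowitz inequalities bounding truncated correlations of local observables by two-point functions — and clustering implies $\mathrm{Aut}(G)$-ergodicity. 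Now assume~(a). Then $\mu^f$ satisfies the hypotheses of Theorem~\ref{m1}, so $(s_0,s_1,k^+)$ is a.s.\ one fixed triple, and the spin-flip symmetry of $\mu^f$ gives $s_0=s_1$; hence that triple is $(0,0,1)$ or $(\infty,\infty,\infty)$.

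The main obstacle is excluding $(0,0,1)$, and here the blanket hypothesis $p_c^{site}(G)<\tfrac12$ is indispensable: once $J$ is large the domination of part~(1) is useless, since then $\rho<p_c^{site}(G)$. In the case $(0,0,1)$ there is no infinite monochromatic Ising cluster, hence by Lemma~\ref{l213}(A) no infinite cluster in the coupled $FRC_{p,2}$; thus $FRC_{p,2}$ is subcritical and a.s.\ all of its clusters are finite. I would then argue that the ``divide-and-colour'' process obtained by assigning i.i.d.\ uniform $\pm1$ spins to these finite $FRC_{p,2}$-clusters nonetheless produces an infinite ``$+$''-cluster, precisely because its colouring probability $\tfrac12$ strictly exceeds $p_c^{site}(G)$. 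This is \emph{not} a stochastic-domination statement — the divide-and-colour process has single-site conditional probabilities arbitrarily close to $0$ — so it requires a renormalization/coarse-graining argument: reading the coloured configuration on blocks much larger than a typical $FRC_{p,2}$-cluster and comparing it with supercritical site percolation on the graph obtained by contracting those clusters, whose critical parameter does not exceed $p_c^{site}(G)$. This is the step for which the techniques of~\cite{ZL20} are invoked, and it is where the real work lies. Since it contradicts $s_1=0$, the only surviving possibility is $(s_0,s_1,k^+)=(\infty,\infty,\infty)$, which is the assertion of part~(2).
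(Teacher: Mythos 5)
Your part (1) is correct and takes essentially the paper's route: a Holley-type comparison with Bernoulli site percolation followed by Theorem \ref{m1}. The paper's Lemma \ref{l51} sandwiches $\mu$ between two Bernoulli measures $\nu_2\prec\mu^-\prec\mu\prec\mu^+\prec\nu_1$, while you bound the single-site conditional probabilities below by $\rho=e^{-dJ}/(e^{dJ}+e^{-dJ})$ and observe that (\ref{jh}) is exactly $\rho>p_c^{site}(G)$; this is the same computation, and your explicit ergodic-decomposition step (extremal invariant Gibbs measures are ergodic and still Gibbs, hence insertion- and deletion-tolerant) is a reasonable way to justify applying Theorem \ref{m1} to an arbitrary invariant Gibbs measure, a point the paper leaves implicit. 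Your sketches of (c)$\Rightarrow$(b) and (b)$\Rightarrow$(a) also track the results the paper simply cites (\cite{LS99} for cluster indistinguishability, \cite{SR01} for ergodicity of the free state).

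The genuine gap is in the main assertion of part (2), i.e.\ deriving the conclusion from (a) alone. After the reduction, via Theorem \ref{m1} and spin-flip symmetry, to $(s_+,s_-,k^+)\in\{(0,0,1),(\infty,\infty,\infty)\}$ --- exactly the paper's (\ref{pmif}) --- you must rule out $(0,0,1)$ when $J\geq h/d$, and here your argument is a program, not a proof. The central claim, that colouring the a.s.\ finite clusters of $FRC_{p,2}$ with i.i.d.\ fair coins produces an infinite monochromatic cluster whenever $p_c^{site}(G)<\frac12$, is not a known theorem, and your coarse-graining sketch does not establish it: mere absence of an infinite FK cluster gives no tail control on cluster diameters, so there is no block scale ``much larger than a typical cluster''; the graph obtained by contracting the (random) clusters is random, and nothing guarantees its site-percolation threshold is at most $p_c^{site}(G)$; and divide-and-colour measures are exactly the setting in which no general ``clumping helps'' monotonicity is available, which is why stochastic domination fails in the first place. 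Invoking \cite{ZL20} does not close this step either: the results of \cite{ZL20} used in this paper (Lemmas \ref{n1f}, \ref{lc1}, \ref{l41}) concern the possible numbers of infinite clusters and contours on planar nonamenable graphs, not a renormalization of coloured FK clusters. The paper proceeds differently at this point: for $J\geq h/d$ it chooses $J'<h/d$ and transfers the existence of an infinite ``$+$''-cluster from $\mu^f_{J'}$ to $\mu^f_{J}$ by a monotonicity-in-$J$ comparison of the free measures, then concludes with (\ref{pmif}); whatever care that comparison itself requires (stochastic ordering of two spin-flip-symmetric measures is delicate), it is a one-line reduction to part (1), whereas in your proposal the corresponding step is precisely where the work lies and is left unproved.
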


Theorem \ref{ipl} discusses conditions on the coupling constant $J$ such that the Ising configuration on a non-amenable, transitive, planar, one-ended graph has infinitely many infinite ``$+$''-clusters and infinitely many infinite ``$-$''-clusters. In Theorem \ref{tm3}, we shall compare the Ising percolation with the percolation in its random-cluster representation, and prove that percolation does not alway occur simultaneously in an Ising model and its random cluster representation.

\begin{theorem}\label{tm3}Let $G$ be a connected, vertex-transitive, locally finite, planar, nonamenable, one-ended graph with vertex degree $d$. Then 
\begin{enumerate}
\item If $p_c^{site}(G)>\frac{1}{2}$, then 
\begin{eqnarray}
p_{c,2}^{w}(G)\geq 1-\left(\frac{1-p_c^{site}}{p_c^{site}}\right)^{\frac{1}{d}}>0.\label{pcwl}
\end{eqnarray}
\item If $p_c^{site}(G)<\frac{1}{2}$, and
\begin{eqnarray}
J <\frac{1}{2}\log\left(\frac{1}{1-p^w_{c,2}}\right);\label{jj3}
\end{eqnarray}
then for any Gibbs measure of the Ising model on $\omega\in\{\pm 1\}^{V(G)}$ with coupling constant $J$, a.s.~there are infinitely many infinite ``$+$"-clusters and infinitely many infinite ``$-$"-clusters in $\omega$, and infinitely many infinite contours in $\phi^+_{\omega}$. However, for any Gibbs measure of the random cluster representation of the Ising model  on $\{0,1\}^{E(G)}$, a.s.~there are no infinite 1-clusters.
\end{enumerate}
\end{theorem}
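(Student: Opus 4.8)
\emph{Common tool, and Part (1).}
The elementary input used throughout is that a ferromagnetic Ising Gibbs measure is squeezed between two Bernoulli site percolations: if $\mu$ is any Gibbs measure of the Ising model on $G$ with coupling constant $J\ge 0$, then for each vertex $v$ the field $J\sum_{u\sim v}\sigma_u$ lies in $[-Jd,Jd]$, so $\pi_-(J)\le\mu(\sigma_v=+1\mid\sigma_{V\setminus\{v\}})\le\pi_+(J)$ with $\pi_\pm(J):=e^{\pm Jd}/(e^{Jd}+e^{-Jd})$; since ferromagnetic Ising measures satisfy the FKG lattice condition, Holley's inequality (applied in finite volume and passed to the limit) gives, identifying the spin value $+1$ with ``occupied'', $\mathrm{Bernoulli}(\pi_-(J))\preceq\mu\preceq\mathrm{Bernoulli}(\pi_+(J))$, and the global spin flip yields the same upper bound for the ``$-$''-set and for any convex combination of Ising Gibbs states. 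For Part (1), fix $p<1-(\tfrac{1-p_c^{site}}{p_c^{site}})^{1/d}$ and put $J:=-\tfrac12\log(1-p)$, so $e^{-2J}=1-p$ and $\pi_+(J)=(1+(1-p)^d)^{-1}$; a short computation identifies the chosen range of $p$ with $\pi_+(J)<p_c^{site}(G)$. By Lemma \ref{l213}(B), colouring the components of $\omega\sim WRC_{p,2}$ uniformly and independently produces a spin field $\sigma$ of law $\tfrac12(\mu^++\mu^-)$ in which both $\{\sigma=+1\}$ and $\{\sigma=-1\}$ are dominated by $\mathrm{Bernoulli}(\pi_+(J))$; as $\pi_+(J)<p_c^{site}$, $\sigma$ a.s.\ has no infinite monochromatic cluster, hence --- every cluster of $\omega$ being monochromatic in $\sigma$ --- $\omega$ a.s.\ has no infinite cluster, so $p\le p_{c,2}^w(G)$. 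Letting $p$ increase to the bound yields the inequality in (\ref{pcwl}), whose strict positivity follows from $p_c^{site}>\tfrac12$.

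\emph{Part (2): random-cluster side, and reduction to a unique Ising state.}
Writing $p:=1-e^{-2J}$, condition (\ref{jj3}) says exactly $p<p_{c,2}^w$, so by (\ref{wrc}) $WRC_{p,2}$ a.s.\ has no infinite cluster. Every Gibbs measure of the $q=2$ random-cluster model is stochastically dominated by $WRC_{p,2}$, and ``there is an infinite open cluster'' is increasing, so no such Gibbs measure has an infinite open cluster; this is the second assertion. On the Ising side, a standard property of the Edwards--Sokal coupling gives $\langle\sigma_v\rangle_{\mu^+}=WRC_{p,2}(v\leftrightarrow\infty)=0$; vanishing ``$+$''-magnetization forces $\mu^+=\mu^-$, and since every Ising Gibbs measure $\mu$ obeys $\mu^-\preceq\mu\preceq\mu^+$ the Ising model has a unique Gibbs measure $\mu=\mu^f=\mu^+=\mu^-$, which is $\Aut(G)$-invariant, invariant under the global spin flip, $\Aut(G)$-ergodic (being the only invariant Gibbs state), and insertion- and deletion-tolerant as noted in the text.

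\emph{Part (2): Ising side, and the main obstacle.}
Apply Theorem \ref{m1} to $\mu$: by ergodicity the triple $(s_0,s_1,k^+)$ is a.s.\ a single element of the displayed list, and since the global flip leaves $\mu$ invariant while exchanging $s_0,s_1$ and fixing $k^+$, that element is symmetric in its first two coordinates, hence equals $(0,0,1)$ or $(\infty,\infty,\infty)$. To exclude $(0,0,1)$ one must produce an infinite monochromatic cluster, and here is where $p_c^{site}(G)<\tfrac12$ must be used: the plan is to deduce from $p<p_{c,2}^w$ that $J<h/d$ with $h$ as in (\ref{pch}) --- equivalently $\pi_-(J)>p_c^{site}(G)$, equivalently $p<1-(\tfrac{p_c^{site}}{1-p_c^{site}})^{1/d}$ --- whereupon the lower Holley bound gives $\mu\succeq\mathrm{Bernoulli}(\pi_-(J))$, which percolates, so $\mu$ a.s.\ has an infinite ``$+$''-cluster and, by the flip, an infinite ``$-$''-cluster; this rules out $(0,0,1)$, forces $(s_0,s_1,k^+)=(\infty,\infty,\infty)$, and hence gives all three conclusions for $\mu$ and therefore for every Ising Gibbs measure. (Once $J<h/d$ is known one may equally quote Theorem \ref{ipl}(1), whose conclusion already includes the infinite contours.) The step I expect to be hardest is exactly this implication, i.e.\ the upper bound $p_{c,2}^w(G)\le 1-(\tfrac{p_c^{site}}{1-p_c^{site}})^{1/d}$: to prove it one should show $WRC_{p,2}$ already has an infinite cluster once $p$ exceeds that value --- equivalently $\langle\sigma_v\rangle_{\mu^+}>0$ there --- which demands upgrading the Ising--Bernoulli comparison from mere existence of an infinite cluster to a quantitative lower bound on the magnetization (for instance via a finite-volume Holley comparison exploiting the ``$+$''-boundary condition, or by feeding a duality-type input such as $p_{c,2}^w(G)\le p_{u,2}^f(G)$ into Theorem \ref{ipl}(2)(c)). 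Absent the sharp bound, the argument still proves Part (2) on the range $J<\min\{h/d,\ \tfrac12\log\frac{1}{1-p_{c,2}^w}\}$, which already witnesses the claimed failure of simultaneous percolation in the two models.
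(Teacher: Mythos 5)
Your Part (1) is correct and is essentially the paper's argument: compare the Ising measure with Bernoulli site percolation via Holley (the paper's Lemma \ref{l51}), conclude that under $\frac12(\mu^++\mu^-)$ there are no infinite monochromatic clusters when $\pi_+(J)<p_c^{site}$, and then use the coupling of Lemma \ref{l213}(B) to kill infinite clusters of $WRC_{p,2}$; your reparametrization (fixing $p$ instead of taking a supremum over admissible $(J,h)$) is cosmetic. In Part (2), the random-cluster half and the deduction of a unique Ising Gibbs state from $p=1-e^{-2J}<p_{c,2}^w$ (the paper's Lemma \ref{liff}) are also fine.

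The gap is in the Ising half of Part (2), and you have identified it yourself: your route to excluding $(s_0,s_1,k^+)=(0,0,1)$ needs $J<h/d$ with $h$ as in (\ref{pch}), i.e.\ the bound $p_{c,2}^{w}(G)\le 1-\bigl(\tfrac{p_c^{site}}{1-p_c^{site}}\bigr)^{1/d}$, which you neither prove nor need; as written your argument only covers $J<\min\{h/d,\tfrac12\log\tfrac{1}{1-p_{c,2}^w}\}$, which is weaker than the theorem. The paper closes this differently and much more cheaply: since $FRC_{p,2}\preceq WRC_{p,2}$ one has $p_{c,2}^{w}\le p_{c,2}^{f}\le p_{u,2}^{f}$, so (\ref{jj3}) already implies condition (c) of Theorem \ref{ipl}(2); that theorem (whose proof handles $J\ge h/d$ not by a Bernoulli lower bound but by monotonicity of $\mu^f_J$ in $J$ together with the dichotomy $(s_+,s_-)\in\{(0,0),(\infty,\infty)\}$ from Theorem \ref{m1}) then gives infinitely many infinite ``$+$''- and ``$-$''-clusters under $\mu^f$ for the full range (\ref{jj3}); uniqueness of the Gibbs state transfers this to every Gibbs measure, Theorem \ref{m1} upgrades $(\infty,\infty)$ to $k^+=\infty$, and (\ref{wrc}) gives the random-cluster statement. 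Note also that your own intermediate step already hands you condition (a): the unique Gibbs measure is $\mathrm{Aut}(G)$-invariant and, being the unique (hence extremal invariant) Gibbs state, $\mathrm{Aut}(G)$-ergodic, so quoting Theorem \ref{ipl}(2)(a) would have closed your gap without the unproven upper bound on $p_{c,2}^w$; what cannot be salvaged is the claim, implicit in your main line, that (\ref{jj3}) forces $J<h/d$.
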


\noindent{\textbf{Remark.}} If $G$ is a connected, transitive, planar, one-ended graph with vertex degree $d\geq 7$, then $G$ must be non-amenable, and $p_c^{site}(G)<\frac{1}{2}$; see Lemma \ref{lc1}.

The next theorem discusses percolation properties in the XOR Ising model.

\begin{theorem}\label{t16}Let $G$ be a connected, vertex-transitive, locally finite, planar, nonamenable graph with one end. Let $d\geq 3$ be the vertex degree of $G$. Assume that the critical site percolation probability on $G$ satisfies (\ref{psh}).
Consider the XOR Ising model with spins located on vertices of $G$ such that $\sigma_{XOR}=\sigma_1\sigma_2$; where $\sigma_1,\sigma_2\in\{\pm\}^{V(G)}$ are two i.i.d.~Ising configurations with coupling constant $J\geq 0$ on each edge.

 Let $h>0$ satisfy
\begin{eqnarray}
\frac{2}{(e^{h}+e^{-h})^2}=p_c^{site}(G).\label{pch1}
\end{eqnarray}
If 
 \begin{eqnarray}
 0\leq J<\frac{h}{d},\label{jh1} 
 \end{eqnarray}
 then $\mu\times \mu$-a.s. there are infinitely many infinite ``$+$''-clusters and infinitely many infinite ``$-$''-clusters in $\sigma_{XOR}$, and infinitely many infinite contours in $\phi^+_{\sigma_{XOR}}$, where $\mu$ is an arbitrary $\mathrm{Aut}(G)$-invariant Gibbs measure for the Ising model on $G$ with coupling constant $J$.
 \end{theorem}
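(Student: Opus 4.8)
The plan is to reduce Theorem \ref{t16} to Theorem \ref{m1} exactly as Theorem \ref{ipl}(1) does, with the only new ingredient being the identification of the correct single-site marginal for the XOR spin. First I would record that, since $\sigma_1$ and $\sigma_2$ are each $\mathrm{Aut}(G)$-invariant Gibbs measures for the Ising model, the product measure $\mu\times\mu$ is $\mathrm{Aut}(G)$-invariant, and the XOR configuration $\sigma_{XOR}(v)=\sigma_1(v)\sigma_2(v)$ is a deterministic $\mathrm{Aut}(G)$-equivariant function of the pair; hence its law is $\mathrm{Aut}(G)$-invariant. To apply Theorem \ref{m1} one also needs $\mathrm{Aut}(G)$-ergodicity and insertion/deletion-tolerance. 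Ergodicity is the delicate point (see the obstacle paragraph below); insertion- and deletion-tolerance of the XOR law I would obtain by noting that flipping $\sigma_1(v)$ alone flips $\sigma_{XOR}(v)$ alone, and the single-vertex conditional flip has positive probability for the Ising model with any $J\in\RR$, so the finite-energy property passes to the XOR image.

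Next I would carry out the key comparison step. Encode $\sigma_{XOR}$ as a site percolation $\omega\in\{0,1\}^{V(G)}$ by setting $\omega(v)=1$ iff $\sigma_{XOR}(v)=+1$; the infinite ``$+$''-clusters (resp.\ ``$-$''-clusters) of $\sigma_{XOR}$ are the infinite $1$-clusters (resp.\ $0$-clusters) of $\omega$, and the infinite contours in $\phi^+_{\sigma_{XOR}}$ are the infinite contours in $\phi^+_{\omega}$ in the sense of Theorem \ref{m1}. By Theorem \ref{m1}, $(s_0,s_1,k^+)$ takes one of the six listed values a.s.; to rule out all but $(\infty,\infty,\infty)$ it suffices, by the symmetry $\sigma_{XOR}\leftrightarrow-\sigma_{XOR}$ (which holds because $\sigma_1\mapsto-\sigma_1$ is a measure-preserving symmetry of the Ising Gibbs measure paired with itself — here I would use that $\mu$ is taken $\mathrm{Aut}(G)$-invariant and that the statement is about $\mu\times\mu$, so the $\ZZ_2$ spin-flip on the first coordinate gives the needed symmetry of the XOR law), to show that with positive probability there is no infinite ``$+$''-cluster and no infinite ``$-$''-cluster, i.e.\ to show $s_1=0$ with positive probability. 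For this I would compare with i.i.d.\ Bernoulli site percolation at the single-site density of a $+$ spin. A short computation gives
\begin{eqnarray*}
\PP[\sigma_{XOR}(v)=-1]=2\,\PP[\sigma_1(v)=+1]\,\PP[\sigma_1(v)=-1],
\end{eqnarray*}
and the single-site spin marginal for the Ising model with coupling constant $J$ on a $d$-regular graph is squeezed, via the standard $\pm$-boundary Gibbs comparison (monotone coupling with the one-vertex system whose external field from its $d$ neighbours is at most $dJ$ in absolute value), into the interval giving $\PP[\sigma_1(v)=-1]\le e^{-dJ}/(e^{dJ}+e^{-dJ})$ after the correct bookkeeping; substituting and using $0\le J<h/d$ together with the definition \eqref{pch1} of $h$ yields $\PP[\sigma_{XOR}(v)=-1] < p_c^{site}(G)$ and, symmetrically, $\PP[\sigma_{XOR}(v)=+1] < p_c^{site}(G)$ — wait, both cannot hold, so what I actually want is the one-sided bound $\PP[\sigma_{XOR}(v)=-1] < p_c^{site}(G)$, which by stochastic domination of $\omega$ (on the event of interest) below i.i.d.\ Bernoulli at a subcritical density, together with insertion-tolerance, forces $s_1 = 0$ with positive probability; then Theorem \ref{m1} upgrades this to the conclusion that a.s.\ $(s_0,s_1,k^+)=(\infty,\infty,\infty)$. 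I would lift the stochastic-domination comparison from the analogous step in the proof of Theorem \ref{ipl}, which the paper presumably establishes via Liggett–Schonmann–Stacey-type domination of weakly dependent fields, or via the explicit conditional-flip bounds for the Ising model.

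The main obstacle is establishing the ergodicity (and, more precisely, the indistinguishability / uniqueness-of-tail-structure) needed to invoke Theorem \ref{m1} for $\sigma_{XOR}$ rather than merely for a single Ising configuration: even if $\mu$ is $\mathrm{Aut}(G)$-ergodic, the product $\mu\times\mu$ is ergodic, but the pushforward under the (non-injective) map $(\sigma_1,\sigma_2)\mapsto\sigma_1\sigma_2$ need not obviously be ergodic, and the hypotheses of Theorem \ref{m1} are stated for an ergodic percolation. I expect to handle this by either (i) observing that any $\mathrm{Aut}(G)$-invariant event for $\sigma_{XOR}$ pulls back to an $\mathrm{Aut}(G)\times\ZZ_2$-invariant event for $(\sigma_1,\sigma_2)$ and invoking ergodicity of $\mu\times\mu$ under this larger group (using mixing/tail-triviality of the extremal Ising Gibbs measure when $\mu$ is extremal, then passing to general invariant $\mu$ by ergodic decomposition), or (ii) re-running the six-case dichotomy of Theorem \ref{m1} directly for $\sigma_{XOR}$ using only the finite-energy and invariance properties, as the proof of Theorem \ref{m1} itself surely does modulo ergodicity. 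The density computation and the $J<h/d$ bookkeeping with \eqref{pch1} are routine once the framework is in place.
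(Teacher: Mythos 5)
Your overall frame --- reduce to Theorem \ref{m1} and compare the single-site XOR density with Bernoulli site percolation, exactly as in Theorem \ref{ipl}(1) --- is the paper's approach (Lemma \ref{l61} is the XOR analogue of Lemma \ref{l51}, proved by checking the F.K.G.\ lattice condition and invoking Holley). But your key comparison step is inverted, in two linked ways. First, the reduction: to get $(\infty,\infty,\infty)$ from the six-element list of Theorem \ref{m1} you must \emph{rule out} $(0,0,1)$, i.e.\ show that with probability one (or, given ergodicity and symmetry, with positive probability) there \emph{is} an infinite ``$+$''-cluster and an infinite ``$-$''-cluster. You instead set out to show that with positive probability there is \emph{no} infinite cluster of either sign; if that succeeded it would establish $(0,0,1)$, the negation of the theorem. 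Second, the inequality you aim for is false under the hypotheses: writing $a=\PP[\sigma_1(v)=+1]$ and $b=1-a$, the single-site squeeze gives $a,b\in\bigl[\tfrac{e^{-dJ}}{e^{dJ}+e^{-dJ}},\tfrac{e^{dJ}}{e^{dJ}+e^{-dJ}}\bigr]$, hence
\begin{eqnarray*}
\PP[\sigma_{XOR}(v)=-1]=2ab\;\geq\;\frac{2}{(e^{dJ}+e^{-dJ})^2}\;>\;\frac{2}{(e^{h}+e^{-h})^2}\;=\;p_c^{site}(G)
\end{eqnarray*}
whenever $dJ<h$, and symmetrically $\PP[\sigma_{XOR}(v)=+1]=a^2+b^2\leq \tfrac{e^{2dJ}+e^{-2dJ}}{(e^{dJ}+e^{-dJ})^2}<1-p_c^{site}(G)$ by (\ref{puh1}). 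So both spin densities lie strictly in $(p_c^{site},1-p_c^{site})$; neither is subcritical. The moment you noticed ``both cannot hold'' was the signal that the inequality should be $>p_c^{site}$, not $<p_c^{site}$. The paper's argument then sandwiches $\mu\times\mu$ between Bernoulli measures at densities $p_2\in(p_c^{site},\tfrac{2}{(e^{dJ}+e^{-dJ})^2})$ and $p_1\in(\tfrac{e^{2dJ}+e^{-2dJ}}{(e^{dJ}+e^{-dJ})^2},1-p_c^{site})$, concluding that a.s.\ infinite clusters of both signs exist, after which Theorem \ref{m1} leaves only $(\infty,\infty,\infty)$.

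Two smaller points. Your worry about ergodicity of the pushforward law of $\sigma_{XOR}$ is reasonable but not where the work is: the paper handles ergodicity inside the proof of Theorem \ref{m1} by reducing to ergodic components, and the conclusion ``$s_0\geq 1$ and $s_1\geq 1$ a.s.'' is obtained directly from stochastic domination without needing ergodicity of the XOR law. Also, the domination is not a Liggett--Schonmann--Stacey argument for weakly dependent fields; it is a Holley-inequality comparison of the finite-volume product measure $\mu_R\times\mu_R$ against $\nu_{i,R}$, using the density bounds displayed above, followed by a weak limit.
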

 
 Using planar duality of the XOR Ising model, we obtain the following theorem.
 
\begin{theorem}\label{xorc} Let $\sigma_1$, $\sigma_2$ be two i.i.d.\ Ising models with spins located on vertices of the graph $G$, and coupling constant $K\geq 0$. Let $\sigma_{XOR}=\sigma_1\sigma_2$. Let $J\geq 0$ be given by
\begin{eqnarray}
e^{-2J}=\frac{1-e^{-2K}}{1+e^{-2K}},\label{jkr}
\end{eqnarray}
and let $k^+$ be the number of infinite contours in $\phi^+_{\sigma_{XOR}}$. Assume $J$ satisfies (\ref{pch1}) and (\ref{jh1}).  Let $\mu^+$ (resp.\ $\mu^-$. $\mu^f$) be the infinite-volume Ising Gibbs measure with ``$+$''-boundary conditions (resp.\ ``$-$'' boundary conditions, free boundary conditions) and coupling constant $K$ on each edge of $G$, then
\begin{eqnarray*}
\mu^+\times \mu^+(k^+\in\{0,\infty\})=\mu^-\times \mu^-(k^+\in\{0,\infty\})=\mu^f\times \mu^f(k^+\in\{0,\infty\})=1.
\end{eqnarray*}
\end{theorem}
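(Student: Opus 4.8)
\textbf{Proof proposal for Theorem \ref{xorc}.}

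The plan is to combine Theorem \ref{t16} with the known planar duality of the XOR Ising model, as established for instance in \cite{DB11,BT12,HL,ZL17}. Recall that if $\sigma_1,\sigma_2$ are i.i.d.\ Ising configurations on $G$ with coupling constant $K$, then the contours of $\sigma_{XOR}=\sigma_1\sigma_2$ (drawn on the dual graph $G^+$) have a distribution that is, up to the change of parameter \eqref{jkr}, dual to the contour distribution of an XOR Ising model with coupling constant $J$ on $G$. The quantity $e^{-2J}=\frac{1-e^{-2K}}{1+e^{-2K}}$ is precisely the standard low-temperature/high-temperature duality involution for the Ising model, which is exactly what appears in the XOR contour correspondence. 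So the first step is to recall precisely this duality statement from the cited literature and to pin down that the number $k^+$ of infinite contours in $\phi^+_{\sigma_{XOR}}$ for parameter $K$ equals (in distribution, or even in a coupling) the number of infinite contours in the corresponding XOR configuration at parameter $J$.

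Next I would apply Theorem \ref{t16} at coupling constant $J$. Since $J$ is assumed to satisfy \eqref{pch1} and \eqref{jh1}, Theorem \ref{t16} tells us that for the XOR Ising model built from two i.i.d.\ Ising configurations with coupling constant $J$, there are $\mu\times\mu$-a.s.\ infinitely many infinite contours in $\phi^+_{\sigma_{XOR}}$, and this holds for \emph{any} $\mathrm{Aut}(G)$-invariant Gibbs measure $\mu$ for the Ising model at coupling constant $J$. In particular it holds for $\mu^f$ (and for $\mu^+,\mu^-$). However, what we want at the end is a statement about $\mu^+\times\mu^+$, $\mu^-\times\mu^-$, $\mu^f\times\mu^f$ at coupling constant $K$, not at coupling constant $J$. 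So the right way to organize the argument is: for each choice of boundary condition $\natural\in\{+,-,f\}$, the measure $\mu^\natural\times\mu^\natural$ at parameter $K$ induces, via the XOR-duality map, a measure on contour configurations on $G^+$; this pushed-forward measure is $\mathrm{Aut}(G^+)$-invariant (because $\mu^\natural$ is $\mathrm{Aut}(G)$-invariant and the duality map is equivariant) and is the contour measure of some XOR Ising model at parameter $J$. Theorem \ref{m1} (or its XOR consequence used in the proof of Theorem \ref{t16}) then forces the number of infinite contours to lie in $\{0,1,\infty\}$, and the quantitative input from Theorem \ref{t16} rules out the value $1$ under the hypothesis \eqref{jh1} at parameter $J$ — but the value $1$ can only be excluded when the measure is in the regime covered by Theorem \ref{t16}. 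Since we only claim $k^+\in\{0,\infty\}$ here (not $k^+=\infty$), it actually suffices to invoke the ``no single infinite contour'' part: the list in Theorem \ref{m1} shows that whenever there are infinite contours at all, the count is $\infty$ unless we are in the $(0,1,0)$ or $(1,0,0)$ cases, and those correspond to $k^+=0$. Thus $k^+\in\{0,\infty\}$ for any $\mathrm{Aut}(G)$-invariant percolation of the relevant type.

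Concretely, the steps in order are: (i) state the XOR planar-duality lemma with the parameter change \eqref{jkr}, citing \cite{DB11,BT12,HL,ZL17}; (ii) verify that the duality map is $\mathrm{Aut}(G)$-to-$\mathrm{Aut}(G^+)$ equivariant and sends an $\mathrm{Aut}(G)$-invariant XOR measure to an $\mathrm{Aut}(G^+)$-invariant contour measure, so that the hypotheses of Theorem \ref{m1} are met; (iii) apply Theorem \ref{m1} to conclude the contour count lies in $\{0,1,\infty\}$ and, using that a single infinite contour would disconnect the plane into exactly two ends of complementary infinite clusters forcing $(s_0,s_1)\in\{(0,1),(1,0)\}$ hence $k^+=0$ in that case, deduce $k^+\in\{0,\infty\}$; (iv) observe this applies verbatim to $\mu^+\times\mu^+$, $\mu^-\times\mu^-$, and $\mu^f\times\mu^f$, since each is $\mathrm{Aut}(G)$-invariant. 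The main obstacle I expect is step (ii)–(iii): making the duality correspondence between ``infinite contours of $\phi^+_{\sigma_{XOR}}$ at parameter $K$'' and ``infinite contours in the XOR model at parameter $J$'' precise enough that the trichotomy of Theorem \ref{m1} can be transported — in particular checking that infiniteness of contours is preserved under the combinatorial duality map and that no measure-zero pathologies (e.g.\ contours that are infinite on one side but not the other) arise. Once that correspondence is nailed down, the conclusion is immediate from Theorem \ref{m1} and Theorem \ref{t16}.
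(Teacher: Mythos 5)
Your overall theme (XOR planar duality with the parameter change (\ref{jkr}), plus the quantitative input of Theorem \ref{t16} at the dual parameter $J$) matches the paper's, but the step where you actually exclude $k^+=1$ does not work, and the mechanism the paper uses for this is absent from your proposal. Theorem \ref{m1} does \emph{not} rule out $k^+=1$: the triple $(s_0,s_1,k^+)=(0,0,1)$ is explicitly on its list, i.e.\ a single infinite contour with no infinite ``$+$''- or ``$-$''-cluster is an admissible scenario for an invariant, insertion- and deletion-tolerant percolation. Your claim that a single infinite contour ``would disconnect the plane into exactly two ends of complementary infinite clusters forcing $(s_0,s_1)\in\{(0,1),(1,0)\}$'' is false; that is exactly the case that must be excluded by other means. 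Moreover, the duality does not assert that the number of infinite contours at parameter $K$ equals (even in distribution) the number at parameter $J$ --- if it did, Theorem \ref{t16} would immediately give $k^+=\infty$ a.s., a strictly stronger conclusion than the theorem claims. What the partition-function identity (\ref{zlf}) actually provides is a single coupling of two \emph{disjoint} contour systems: the contours of the XOR model on $G$ at $K$ (drawn on $E(G^+)$) and the contours of the XOR model on $G^+$ at $J$ (drawn on $E(G)$), with the stated marginals.

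The missing idea is cluster indistinguishability. The paper argues: if with positive probability there were exactly one infinite contour $C\subset E(G^+)$ of the $K$-model, then, because the two dual contour systems are disjoint, $C$ would lie inside a single spin-cluster of the dual XOR configuration on $G^+$ at parameter $J$. Viewing that dual configuration as a site percolation on $G^+$ with the $K$-contours as scenery (Definition \ref{idd}), Proposition \ref{idc} forces either every infinite ``$+$''-cluster or none to contain an infinite contour of the scenery; since Theorem \ref{t16} supplies infinitely many infinite clusters, the number of infinite $K$-contours is $0$ or $\infty$. Without this (or an equivalent) argument your proposal only re-derives $k^+\in\{0,1,\infty\}$. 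You also leave unaddressed why the conclusion is stated for $\mu^+\times\mu^+$, $\mu^-\times\mu^-$ and $\mu^f\times\mu^f$ specifically: in the paper this comes from identifying the free-boundary XOR model on $\Lambda\subset G^+$ with the ``$++$''/``$--$''-boundary XOR model on the dual $\Lambda_*\setminus\{v_\infty\}\subset G$, both boundary conditions inducing the same contour law.
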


The organization of the paper is as follows. In Section \ref{bk}, we review some known results about percolation and Ising model in the hyperbolic plane, which will be used to prove main results in this paper. In Section \ref{p11}, we prove Theorem \ref{m1}. In Section \ref{p123}, we prove Theorems \ref{ipl} and \ref{tm3}. In Section \ref{p5}, we prove Theorems \ref{t16} and \ref{xorc}.

\section{Backgrounds}\label{bk}

 In this section, we review some known results about percolation and Ising model in the hyperbolic plane, which will be used to prove main results in this paper.

An Archimedean tiling of a two-dimensional Riemannian manifold is a tiling by regular polygons such that the group of isometries of the tiling acts transitively on the vertices of the tiling. 

\begin{lemma}\label{p21}Let $G$ be a locally finite, connected, vertex-transitive planar graph with at most one end. The $G$ has an embedding on $\mathbb{S}^2$, $\RR^2$ or $\HH^2$ as an 
Archimedean tiling; all automorphisms of $G$ extend to automorphisms of the tiling and are induced by isometries of the geometry.
\end{lemma}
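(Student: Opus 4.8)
The statement is essentially a structure theorem for locally finite, connected, vertex-transitive planar graphs with at most one end, identifying them with Archimedean tilings of one of the three constant-curvature simply connected surfaces $\bS^2$, $\RR^2$, $\HH^2$. The plan is to assemble this from classical results rather than to reprove it from scratch, since each ingredient is standard.

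First I would invoke the theorem of Babai (and, in the one-ended planar case, the work of Imrich, and the comprehensive treatment in the literature on planar transitive graphs — e.g. the results quoted in \cite{GrL} or the references to Mohar's work on planar vertex-transitive graphs) asserting that a connected, locally finite, vertex-transitive planar graph admits a proper embedding in a surface $M$ of constant curvature, with the property that $\Aut(G)$ acts by isometries of $M$; the sign of the curvature is determined by the number of ends. If $G$ has $0$ ends it is finite, hence embeds in $\bS^2$; if $G$ has exactly one end, a mass-transport / isoperimetric argument together with Euler's formula forces the embedding to be in $\RR^2$ (Euclidean, when the average face size and vertex degree satisfy the flatness relation $\sum (1 - 2/d - 2/k_i) = 0$ over the face types at a vertex) or in $\HH^2$ (hyperbolic, when this quantity is positive). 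Next I would upgrade "proper embedding with isometric automorphism action" to "Archimedean tiling": because the action is vertex-transitive, all vertices have the same degree and the same cyclic sequence of face sizes around them; one then realizes each combinatorial face type by the corresponding regular polygon of the appropriate geometry, gluing them along edges. Vertex-transitivity guarantees these polygons fit together consistently around every vertex, so the resulting geometric object is a genuine Archimedean tiling whose $1$-skeleton is isomorphic to $G$. Finally, since the tiling is rigid (a regular polygon is determined up to isometry by its number of sides and the geometry), every graph automorphism of $G$ permutes faces preserving incidence and hence extends uniquely to an isometry of $M$ carrying the tiling to itself; conversely every such isometry restricts to a graph automorphism. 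This gives the claimed identification $\Aut(G) \cong \mathrm{Isom}(\text{tiling})$.

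The main obstacle is the passage from a merely topological/combinatorial embedding to a \emph{metric} Archimedean one — i.e., showing the combinatorial data (common vertex degree, common cyclic face-size pattern) can be simultaneously realized by regular polygons with matching side lengths and with angles summing correctly around each vertex. In the spherical and Euclidean cases there are only finitely many such patterns (the classical list of Archimedean tilings), and in the hyperbolic case the angle-sum constraint is an open condition that can always be met by scaling, so the realization exists; but making this uniform and canonical — so that the isometry group of the realization is exactly $\Aut(G)$ and no larger — requires care. I expect to handle it by citing the classification of vertex-transitive planar tilings (Grünbaum–Shephard for the Euclidean and spherical cases, and the analogous hyperbolic classification) together with Lemma-level rigidity of regular polygons, rather than constructing the embedding by hand.
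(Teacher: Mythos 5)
Your proposal is correct and takes essentially the same route as the paper: the paper's proof is simply a citation to Theorem~3.1 of Babai's work \cite{Bab97}, which already packages the embedding as an Archimedean tiling of $\bS^2$, $\RR^2$ or $\HH^2$ together with the extension of all graph automorphisms to isometries, so the additional scaffolding you describe (the curvature trichotomy via the angle-sum condition and the rigidity of regular polygons) is exactly the content of that cited theorem rather than something you need to supply separately.
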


\begin{proof}See Theorem 3.1 of \cite{Bab97}.
\end{proof}

For an vertex-transitive Archimedean tiling, there is an simple criterion to determine whether the graph is amenable or not (see \cite{R04}).

\begin{lemma}\label{l22}Assume the graph $G$ can be realized as a vertex-transitive Archimedean tiling on $\mathbb{S}^2$, $\RR^2$ or $\HH^2$. Assume that each vertex had degree $d\geq 3$, and is incident to $d$ faces of degree $m_1,m_2,\ldots,m_d$. 
\begin{enumerate}
\item If $\frac{1}{m_1}+\frac{1}{m_2}+\ldots+\frac{1}{m_d}=\frac{d-2}{2}$, then $G$ is infinite and amenable can be embedded into the Euclidean $\mathbb{R}^2$ such that all automorphisms of $G$ extend to automorphisms of the tiling and are induced by isometries of $\RR^2$;
\item If $\frac{1}{m_1}+\frac{1}{m_2}+\ldots+\frac{1}{m_d}>\frac{d-2}{2}$, then $G$ is finite and can be embedded into the sphere $\mathbb{S}^2$ such that all automorphisms of $G$ extend to automorphisms of the tiling and are induced by isometries of $\mathbb{S}^2$.;
\item If $\frac{1}{m_1}+\frac{1}{m_2}+\ldots+\frac{1}{m_d}<\frac{d-2}{2}$, then $G$ is non-amemable can be embedded into the hyperbolic plane $\mathbb{H}^2$ such that all automorphisms of $G$ extend to automorphisms of the tiling and are induced by isometries of $\HH^2$.
\end{enumerate}
\end{lemma}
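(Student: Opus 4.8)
The plan is to reduce the statement to a curvature/Euler-characteristic computation for the tiling, and then to invoke the classification in Lemma~\ref{p21} together with the standard characterization of amenability for Archimedean tilings. First I would recall that, by Lemma~\ref{p21}, the graph $G$ embeds as an Archimedean tiling on $\bS^2$, $\RR^2$, or $\HH^2$, and that the three cases are mutually exclusive and exhaustive; so it suffices to decide which geometry arises in terms of the local data $(d; m_1,\dots,m_d)$. The natural invariant is the combinatorial (angular) curvature concentrated at a vertex $v$: assigning to each incident $m_i$-gon the interior angle $\pi(m_i-2)/m_i$ of a Euclidean regular $m_i$-gon, the angle defect at $v$ is
\begin{eqnarray*}
\kappa(v)=2\pi-\sum_{i=1}^{d}\pi\frac{m_i-2}{m_i}=\pi\left(2-d+\sum_{i=1}^{d}\frac{2}{m_i}\right)=2\pi\left(\frac{1}{m_1}+\cdots+\frac{1}{m_d}-\frac{d-2}{2}\right).
\end{eqnarray*}
Thus the sign of $\sum 1/m_i-(d-2)/2$ is exactly the sign of the combinatorial curvature, and the three hypotheses of the lemma correspond to $\kappa>0$, $\kappa=0$, $\kappa<0$.

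Next I would argue that the sign of $\kappa(v)$ pins down the geometry. For the equality case $\kappa=0$: each regular polygon can be realized Euclidean with these interior angles and the angles around every vertex sum to $2\pi$, so the tiling is a genuine Euclidean Archimedean tiling of $\RR^2$; since a transitive tiling of $\RR^2$ with bounded face sizes is infinite and amenable (one exhausts $\RR^2$ by metric balls $B_n$ and checks $|\partial_E B_n|/|B_n|\to 0$ because $|\partial_E B_n|=O(n)$ while $|B_n|=\Theta(n^2)$), this gives (1). For the positive case $\kappa>0$: by a Gauss--Bonnet / Euler-characteristic argument, if $G$ were infinite then summing $\kappa$ over the (infinitely many) vertices would diverge, contradicting the bound $\sum_v\kappa(v)=2\pi\chi$ valid for a tiling of a closed surface and the fact that an infinite planar tiling has nonpositive total combinatorial curvature (DeVos--Mohar / Higuchi type estimates); hence $G$ is finite and, being a transitive planar tiling, embeds on $\bS^2$, giving (2). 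For the negative case $\kappa<0$: Lemma~\ref{p21} leaves only $\HH^2$ as a possibility once $\bS^2$ (finite) and $\RR^2$ ($\kappa=0$) are excluded, and non-amenability then follows either from the standard fact that a transitive tiling of $\HH^2$ with bounded geometry is non-amenable, or directly: a uniform negative angle defect at every vertex forces linear isoperimetry, i.e.\ $|\partial_E K|\geq c|K|$ for a constant $c=c(\kappa)>0$ and all finite $K$, by a discrete Gauss--Bonnet estimate on the subcomplex spanned by $K$. In each case the statement that all automorphisms extend to isometries of the model geometry is exactly the content of Lemma~\ref{p21}, so nothing new is needed there.

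Finally I would assemble the three cases: since $\{\kappa>0,\ \kappa=0,\ \kappa<0\}$ partitions all possibilities and each has been matched to exactly one of $\bS^2$, $\RR^2$, $\HH^2$ with the asserted finiteness/amenability conclusion, the lemma follows; one should also note that the trichotomy is consistent with Lemma~\ref{p21}, which already guarantees one of the three geometries occurs, so the curvature computation merely identifies \emph{which}.

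I expect the main obstacle to be the rigorous justification in the $\kappa>0$ and $\kappa<0$ cases that the sign of the local combinatorial curvature determines global behavior — i.e.\ that positive defect forces finiteness and that negative defect forces a linear isoperimetric inequality. The clean way around this is to cite the discrete Gauss--Bonnet inequality for planar tilings (total combinatorial curvature $\le 2\pi$, with equality forcing $\bS^2$) rather than re-deriving it, and for amenability to quote the known classification of amenable Archimedean tilings referenced in the paper (\cite{R04}); with those inputs the argument is essentially the elementary angle-sum computation displayed above.
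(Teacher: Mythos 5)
The paper gives no proof of Lemma~\ref{l22} at all: it is stated as a known classification with a pointer to \cite{R04}, the embedding and extension-of-automorphisms clauses being already supplied by Lemma~\ref{p21} (Babai's theorem). Your proposal reconstructs the standard argument underlying that citation, and it is essentially correct. The identity $\kappa(v)=2\pi\bigl(\sum_i 1/m_i-(d-2)/2\bigr)$ is right, and the sign of $\kappa$ does single out the geometry, because in each of $\bS^2$, $\RR^2$, $\HH^2$ the interior angles of the regular faces meeting at a vertex must sum to $2\pi$, while spherical (resp.\ hyperbolic) regular $m$-gons have interior angles strictly larger (resp.\ smaller) than the Euclidean value $\pi(m-2)/m$; this is the cleanest way to see that the three curvature signs match the three cases of Lemma~\ref{p21}, and is slightly tighter than your Gauss--Bonnet divergence argument. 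The only non-local steps are exactly the ones you flag: that $\kappa>0$ everywhere forces finiteness (immediate from compactness of $\bS^2$ once Lemma~\ref{p21} places the tiling there, or via DeVos--Mohar/Higuchi), and that $\kappa<0$ everywhere forces non-amenability. For the latter, in this vertex-transitive, properly and cocompactly embedded setting the least painful route is quasi-isometry invariance: $G$ is quasi-isometric to $\HH^2$, which is non-amenable as a bounded-geometry metric space, so $G$ satisfies a linear isoperimetric inequality; this spares you proving a discrete Gauss--Bonnet isoperimetric estimate from scratch. In short, your route is a genuine (and self-contained) proof where the paper offers only a citation; there is no gap of substance, only the choice of which standard reference to lean on for the two global implications.
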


\begin{lemma}\label{p25}Let $G=(V(G),E(G))$ be a connected, locally finite, quasi-transitive graph. Consider an invariant percolation on $G$. Assume one of the following two conditions holds
\begin{enumerate}
\item the percolation on $G$ is insertion tolerant; or 
\item $G$ is a non-amenable, planar graph with one end;
\end{enumerate}
then the number of infinite 1-clusters is a.s.~$0,1,\infty$.
\end{lemma}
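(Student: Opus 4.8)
The plan is to reduce to the $\Aut(G)$-ergodic case and then, under each of the two hypotheses separately, to rule out that the number of infinite $1$-clusters is a.s.\ a finite integer at least $2$. By a routine reduction (passing to the ergodic decomposition of $\mathbf P$, under which $\Aut(G)$-invariance, and in case (1) insertion tolerance, are inherited by $\mathbf P$-a.e.\ component, while case (2) is a hypothesis on $G$ alone), we may assume $\mathbf P$ is $\Aut(G)$-ergodic; then the number of infinite $1$-clusters equals $\mathbf P$-a.s.\ a deterministic value $N\in\{0,1,2,\dots\}\cup\{\infty\}$, and it remains to exclude $2\le N<\infty$. Fix a vertex $o$, and for $r\ge 0$ let $B_r$ be the ball of radius $r$ about $o$, which is finite and connected. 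Let $\sA_r$ be the event that there are exactly $N$ infinite $1$-clusters and each of them meets $B_r$. Since $\bigcup_r B_r=V(G)$ and each of the $N$ infinite clusters is nonempty, $\sA_r$ increases, as $r\to\infty$, to the a.s.\ event that there are exactly $N$ infinite $1$-clusters, so $\mathbf P(\sA_r)>0$ for all large $r$; fix such an $r$ and write $B_r=\{u_1,\dots,u_m\}$.

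Under hypothesis (1), apply insertion tolerance once at each of $u_1,\dots,u_m$ in turn: this yields $\mathbf P(\Pi_{u_m}\cdots\Pi_{u_1}\sA_r)>0$, and from the definition of $\Pi_v$ one checks that $\Pi_{u_m}\cdots\Pi_{u_1}\sA_r=\{A\cup B_r:\ A\in\sA_r\}$, where a configuration is identified with its set of $1$-vertices. Fix any configuration $\omega'=A\cup B_r$ with $A\in\sA_r$. In $\omega'$ the connected set $B_r$ consists entirely of $1$-vertices and meets each of the $N$ infinite $1$-clusters of $A$, so all $N$ of them become joined, through $B_r$, into a single infinite $1$-cluster of $\omega'$. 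Moreover, every $1$-cluster of $\omega'$ is a union of $1$-clusters of $A$ together with part of the finite set $B_r$, so any infinite $1$-cluster of $\omega'$ contains an infinite $1$-cluster of $A$, hence meets $B_r$, hence is the unique $1$-cluster of $\omega'$ containing the connected set $B_r$; thus $\omega'$ has exactly one infinite $1$-cluster. Consequently, with positive probability there is exactly one infinite $1$-cluster, contradicting that there are a.s.\ $N\ge 2$ of them. Hence $N\in\{0,1,\infty\}$.

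Under hypothesis (2) insertion tolerance is not available, and the plan is to argue by planar duality. By Lemmas \ref{p21} and \ref{l22}, $G$ is properly embedded in $\HH^2$ as an Archimedean tiling, with dual graph $G^+$. Suppose $2\le N<\infty$ and let $C\ne C'$ be two of the infinite $1$-clusters: they are disjoint, connected and infinite, so, $G$ being one-ended, no finite set of vertices separates them; using the planar embedding one then extracts a bi-infinite self-avoiding path in $G^+$ — flanked on its two sides by $0$-vertices adjacent to $C$ and to $C'$ respectively — that separates $C$ from $C'$ in $\HH^2$. The $N$ infinite $1$-clusters therefore canonically determine a finite, nonempty, $\Aut(G)$-equivariant family of such bi-infinite ``interfaces''; the contradiction is that an $\Aut(G)$-invariant law cannot support a configuration carrying such a structure, since a finite positive number of bi-infinite interfaces in a one-ended planar graph cannot be positioned in an $\Aut(G)$-invariant way, as a mass-transport argument exploiting the invariance of $\mathbf P$ and the non-amenability of $G$ makes precise (in the spirit of the arguments in \cite{HJL02}). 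Hence again $N\in\{0,1,\infty\}$.

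I expect the main obstacle to be exactly this last step under hypothesis (2): extracting the canonical bi-infinite separating structure from the planar embedding, and then setting up the mass transport that shows such a structure is incompatible with $\Aut(G)$-invariance. Hypothesis (1), by contrast, is the classical Newman--Schulman merging argument and should be routine, the only mild subtlety being the passage to the ergodic decomposition.
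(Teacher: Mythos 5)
The paper does not reprove this lemma: case (1) is cited to Lemma 2.9 of \cite{ZL20} (Newman--Schulman \cite{ns81}) and case (2) to Lemma 3.5 of \cite{bs00}, so you are in effect reproving the cited results. Your case (1) is the standard Newman--Schulman merging argument and is essentially correct for an ergodic measure (the one small point you gloss is that an infinite $1$-cluster of $\omega'=A\cup B_r$ contains an infinite $1$-cluster of $A$: a priori it could be a union of infinitely many finite $A$-clusters, and one must use local finiteness of $G$ to see that only finitely many $A$-clusters can be glued through the finite set $B_r$). However, your ``routine reduction'' to the ergodic case is not routine: it rests on the claim that $\mathbf P$-a.e.\ ergodic component of an insertion-tolerant invariant percolation is again insertion tolerant. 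Conditioning on the invariant $\sigma$-field does not obviously preserve insertion tolerance (the image $\Pi_v\sA$ of an event inside one component's support could a priori be charged only by other components), and the usual justification --- that invariant events agree a.s.\ with events insensitive to finitely many coordinates, via ergodic averages of local functions --- uses amenability, which is exactly what is absent in the intended application. This step needs a proof or a citation; note that merging alone only yields $\mathbf P(N=1)>0$, which is no contradiction for a non-ergodic $\mathbf P$ with $\mathbf P(N=k)>0$.

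The genuine gap is case (2), where insertion tolerance is unavailable and which is precisely the content of the cited Benjamini--Schramm lemma. Your argument there consists of two unproved assertions: (i) that two disjoint infinite clusters in a one-ended planar nonamenable graph are separated by a canonical bi-infinite self-avoiding dual interface (this requires the kind of interface construction the paper carries out with $\ol G$, $\ol{G}^+$ and Lemma \ref{l41}, not just ``using the planar embedding''); and (ii) that a finite positive number of such bi-infinite interfaces is incompatible with $\Aut(G)$-invariance, which you delegate to ``a mass-transport argument makes precise.'' Point (ii) is not a generic one-liner: a \emph{single} infinite interface path can be excluded by Lemma \ref{l06} (an invariant percolation with one component a.s.\ has $p_c<1$, while a bi-infinite path has $p_c=1$ --- this is exactly Lemma \ref{l43} in the paper), but for $2\le m<\infty$ interface components that lemma does not apply to the disconnected union, and ruling this case out requires a further argument (as in \cite{bs00}); moreover any mass-transport argument needs unimodularity, which holds for planar quasi-transitive one-ended graphs but must be invoked. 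Since you yourself identify this step as the main obstacle, case (2) as written is a plan rather than a proof; to match the paper you should either supply the Benjamini--Schramm argument or cite Lemma 3.5 of \cite{bs00} directly.
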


\begin{proof}For the conclusion under condition (1), see Lemma 2.9 of \cite{ZL20}; see also \cite{ns81}. For the conclusion under condition (2), see Lemma 3.5 of \cite{bs00}.
\end{proof}



\begin{lemma}\label{l06}Let $G$ be a connected, non-amenable, quasi-transitive, unimodular graph, and let $\omega$ be an invariant percolation on $G$ which has a single component a.s. Then $p_c(\omega)<1$ a.s.
\end{lemma}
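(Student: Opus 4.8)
The plan is to show that, on the probability-one event that $\omega$ is a single infinite connected cluster, the subgraph $H$ of $G$ determined by $\omega$ is itself nonamenable, and then to conclude by the standard fact that every bounded-degree nonamenable graph has critical probability strictly less than $1$. Note that $G$, being locally finite and quasi-transitive, has bounded vertex degree $D<\infty$, so $H$ has degree at most $D$; and since $\omega$ is invariant (under the unimodular quasi-transitive group $\Gamma$ of the hypotheses) and a.s.\ equal to one infinite cluster, $H$ is an invariant, a.s.\ connected percolation subgraph of $G$.

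The heart of the argument is that such an $H$ is almost surely nonamenable: there is a (possibly random) $\kappa=\kappa(\omega)>0$ with $|\partial_E^H S|\ge\kappa|S|$ for every finite $S\subseteq V(H)$, where $\partial_E^H S$ denotes the edge boundary of $S$ taken inside $H$ (equivalently, since $\deg_H\le D$, the vertex-isoperimetric profile of $H$ is bounded below). This is exactly the step in which nonamenability and unimodularity of $G$ are used: it is part of the Benjamini--Lyons--Peres--Schramm theory of group-invariant percolation, the mechanism being a mass-transport argument. Indeed, a sequence of finite sets in $H$ with boundary-to-volume ratio tending to $0$ would, transported along the unimodular action of $\Gamma$ on $G$, produce finite subsets of $V(G)$ violating $\inf_{K}|\partial_E K|/|K|>0$, contradicting nonamenability of $G$. (By passing to ergodic components one could even take $\kappa$ deterministic, but the argument below needs only $\kappa(\omega)>0$ for a.e.\ $\omega$.) This is the step I expect to be the main obstacle; the remainder is routine, and the unimodularity hypothesis is genuinely needed here, as non-unimodular nonamenable examples such as the grandparent graph show.

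Finally, fix a realization of $\omega$ for which $H$ is nonamenable with constant $\kappa>0$ and $\deg_H\le D$, and run a first-moment Peierls estimate for Bernoulli$(p)$ percolation on $H$. The number of connected vertex subsets of size $n$ containing a fixed $o\in V(H)$ is at most $A^n$ for some $A=A(D)<\infty$ (the standard lattice-animal bound). If the open cluster $C_o$ of $o$ is finite and equals $S$, then every boundary edge (resp.\ exterior boundary vertex) of $S$ in $H$ is closed, so $\PP_p(C_o=S)\le(1-p)^{\kappa|S|}$; summing over the at most $A^n$ such $S$ with $|S|=n$ gives $\PP_p(|C_o|=n)\le\bigl(A(1-p)^{\kappa}\bigr)^{n}$. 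Choosing $p<1$ close enough to $1$ that $A(1-p)^{\kappa}<\tfrac13$ makes $\PP_p(C_o\text{ finite})\le\sum_{n\ge1}3^{-n}<1$, so $H$ has an infinite cluster with positive probability, hence with probability $1$ by the Kolmogorov $0$--$1$ law for the fixed graph $H$; thus $p_c(H)\le p<1$. Since this holds for a.e.\ realization of $\omega$, we conclude $p_c(\omega)<1$ a.s.; the site-percolation formulation is identical, using exterior vertex boundaries and the vertex-isoperimetric constant of $H$ in place of edge boundaries.
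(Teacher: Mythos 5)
The Peierls half of your argument is fine, but the load-bearing step --- that the single component $H$ of an invariant percolation on a nonamenable unimodular quasi-transitive graph is a.s.\ nonamenable, i.e.\ has a (random) isoperimetric constant $\kappa(\omega)>0$ --- is false, and the mass-transport justification you sketch for it does not work: a finite set $S\subseteq V(H)$ whose boundary \emph{inside $H$} is small tells you nothing about its boundary in $G$, which is what nonamenability of $G$ controls, so no contradiction with $\inf_K|\partial_E K|/|K|>0$ is produced. Concrete counterexample to the step: take $G$ nonamenable, transitive, unimodular with $p_u<1$ (e.g.\ a co-compact Fuchsian lattice, or $T_k\times\ZZ$), fix $p\in(p_u,1)$, and let $\omega$ be the unique infinite cluster of Bernoulli($p$) percolation. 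This is an invariant percolation with a single component a.s., yet by finite energy and ergodicity the cluster a.s.\ contains, for every $n$, a pendant self-avoiding path of length $n$ attached to the rest of the cluster by a single edge; taking $S$ to be such a path gives $|\partial_E^H S|=1$ with $|S|=n$, so the isoperimetric constant of $H$ is zero (here $p_c(H)<1$ nonetheless, so the lemma is untouched --- only your intermediate claim fails). Note also that your sketch never genuinely uses the single-component hypothesis, which is essential: on $T_3\times\ZZ$ the deterministic invariant subgraph consisting of all $\ZZ$-fiber edges has two-ended amenable clusters with $p_c=1$.

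Because of this, the remainder of your argument has no valid input: bounded degree plus connectivity cannot be upgraded to a pointwise expansion constant, and what is actually true is only an averaged (``invariant'' or anchored) form of nonamenability of the cluster, expressed through mass-transport bounds on \emph{expected} degrees or boundary sizes of invariant sub-percolations with finite clusters; passing from such averaged statements to $p_c(\omega)<1$ is precisely the nontrivial content of the result and is not a Peierls count. The paper itself does not prove the lemma internally --- it quotes Theorem 3.4 of Benjamini--Schramm, which rests on the Benjamini--Lyons--Peres--Schramm group-invariant percolation machinery --- and that route is genuinely different from, and cannot be replaced by, the ``cluster is nonamenable, hence Peierls'' plan you propose.
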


\begin{proof}See Theorem 3.4 of \cite{bs00}.
\end{proof}

\begin{lemma}\label{n1f}Let $G=(V(G),E(G))$ be a connected, non-amenable, locally finite, planar, transitive graph with one end. Let $\mu$ be an automorphism-invariant percolation measure on $\{0,1\}^{V(G)}$. Then
\begin{enumerate}
\item If $\mu$ is insertion-tolerant and $\mu$-a.s.~there is a unique infinite 0-cluster, then $\mu$-a.s. there are no infinite 1-clusters.
\item If $\mu$ is deletion-tolerant, and $\mu$-a.s.~there is a unique infinite 1-cluster, then $\mu$-a.s. there are no infinite 0-clusters.
\end{enumerate}
\end{lemma}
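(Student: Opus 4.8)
The plan is to prove part (1); part (2) is its mirror image, obtained by applying (1) to the flipped configuration $\mathbf 1-\omega$ — flipping the two states turns an insertion-tolerant measure into a deletion-tolerant one, interchanges $0$-clusters with $1$-clusters, and interchanges the two conclusions. So from now on let $\mu$ be insertion-tolerant with a $\mu$-a.s.\ unique infinite $0$-cluster, call it $\sO$, and suppose for contradiction that $\mu(\text{there is an infinite }1\text{-cluster})>0$. I would fix a proper embedding of $G$ in the plane $M$ supplied by Lemma~\ref{p21} (here $M=\HH^2$ since $G$ is nonamenable, but only the topology of the plane is used), on which $\Aut(G)$ acts by isometries; note that a transitive, one-ended, planar graph is unimodular, so the mass-transport principle is available.

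Write $k_1$ for the number of infinite $1$-clusters. By Lemma~\ref{p25}(2), $k_1\in\{0,1,\infty\}$ $\mu$-a.s., so either $\mu(k_1=\infty)>0$ or $\mu(k_1=1)>0$, and I would treat these two cases separately. The common tool is a planar \emph{hull} construction (in the spirit of the techniques of \cite{ZL20}): for an infinite connected vertex set $K$ in a locally finite, one-ended, properly embedded planar graph, let $\widehat K\subseteq M$ be the union of $K$, of the closed edges and faces incident to $K$, and of all bounded complementary components of that union. Using local finiteness, one-endedness and the Jordan curve theorem I would first check that $\widehat K$ is closed and connected, that each component of $M\setminus\widehat K$ is an unbounded topological half-plane whose frontier meets $K$, and that every infinite connected vertex set disjoint from $K$ lies in exactly one such component (bounded complementary components contain only finitely many vertices). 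The number of components of $M\setminus\widehat K$ is an $\Aut(G)$-equivariant function of $K$. Establishing this hull lemma cleanly is the first task.

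For the case $\mu(k_1=\infty)>0$, work on that event and apply the hull construction to $K=\sO$. If two infinite $1$-clusters shared a component $U$ of $M\setminus\widehat{\sO}$, then inside the half-plane $U$, whose frontier consists of vertices of $\sO$, those two disjoint infinite $1$-clusters would be separated by an infinite contour both of whose ends run into the frontier, which forces either a finite separating set (impossible for two infinite clusters) or a splitting of $\sO$ (contradicting uniqueness). Hence $M\setminus\widehat{\sO}$ has infinitely many components, each an unbounded half-plane bounded by part of $\sO$ and containing an infinite $1$-cluster; running a mass-transport argument over this equivariantly defined family of half-planes (sending mass from the vertices of $\sO$ to the $1$-clusters they border, and comparing with an isoperimetric count of the frontier) then contradicts the nonamenability of $G$. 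I expect this ``coexistence of many infinite clusters'' step to be the main obstacle, since it is where the soft planar topology must be converted into a quantitative inequality; an alternative is to feed the hull picture into the planar-duality results of \cite{bs00} directly.

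For the case $\mu(k_1=1)>0$, let $C$ be the unique infinite $1$-cluster on that event. If $\sO$ had at least two graph-ends, a finite cutset would split it into two infinite parts, and turning that cutset into $1$'s — legitimate by insertion-tolerance — would produce two infinite $0$-clusters with positive probability, contradicting uniqueness; so $\sO$ has exactly one end, and then the hull lemma forces $M\setminus\widehat{\sO}$ to have a single unbounded component $U_0$ with $C\subseteq U_0$. Thus $\sO$ and $C$ would be two one-ended infinite clusters nested in $M$; inserting finitely many $1$'s along a path in $\widehat C$ that reaches across the channel occupied by $\sO$ (such a path exists because $C$ is infinite and runs alongside $\sO$) once more splits $\sO$ into two infinite $0$-clusters, contradicting uniqueness. (Alternatively this degenerate case is subsumed by the duality statement of \cite{bs00}.) The two cases together give the desired contradiction and prove (1); part (2) follows by the state-flip noted at the outset.
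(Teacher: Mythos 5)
First, note that the paper does not prove this lemma internally: its ``proof'' is a citation to Theorem 1.6 of \cite{ZL20}, so your sketch is really an attempt to reprove that external result, and as written it does not close. The most concrete failure is in your case $k_1=1$ (unique infinite $0$-cluster $\sO$ and unique infinite $1$-cluster $C$). You first use insertion-tolerance to show $\sO$ has exactly one end, and then propose to reach a contradiction by inserting finitely many $1$'s along a path so as to ``split $\sO$ into two infinite $0$-clusters.'' But that is exactly what one-endedness of $\sO$ forbids: removing (or flipping to $1$) finitely many vertices of a one-ended cluster leaves at most one infinite component, so the contradiction you aim for can never be produced. Nor can you salvage this by creating a second infinite $1$-cluster instead: under insertion the $1$-set only grows, and finitely many inserted vertices of bounded degree can only merge or enlarge existing $1$-clusters, never increase the number of infinite ones. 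Since part (1) gives you only insertion-tolerance, the $(1,1)$ case needs a genuinely different mechanism --- in the literature (and in Section 3 of this paper for the analogous steps) this is done with the interface/contour machinery on $\ol G$ and $\ol G^+$, e.g.\ showing the interface between $\sO$ and $C$ would be an invariant bi-infinite self-avoiding path and contradicting Lemma \ref{l06} (Theorem 3.4 of \cite{bs00}), not by a local surgery on $\omega$.

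The case $k_1=\infty$ is also not actually proved. The key claim that two infinite $1$-clusters lying in the same component $U$ of $M\setminus\widehat{\sO}$ would be ``separated by an infinite contour both of whose ends run into the frontier, forcing a finite separating set or a splitting of $\sO$'' is unjustified for site percolation: a set of $0$-vertices separating two $1$-clusters need only be connected in the close-packed (matching) sense, so it may consist of infinitely many finite $0$-clusters, none of which is contained in $\sO$, and no splitting of $\sO$ follows. And the final step --- ``running a mass-transport argument \ldots comparing with an isoperimetric count \ldots contradicts nonamenability'' --- is a hope rather than an argument; you yourself flag it as the main obstacle. Together with the unproved hull lemma, this means the heart of the statement (excluding $(s_0,s_1)=(1,\infty)$ and $(1,1)$ under insertion-tolerance alone) is still open in your write-up; only the reduction of part (2) to part (1) by the state flip, and the one-endedness-of-$\sO$ observation, are solid as stated.
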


\begin{proof}See Theorem 1.6 of \cite{ZL20}.
\end{proof}

\begin{lemma}\label{lc1}Let $G$ be an infinite, connected, locally finite, transitive, planar graph in which each vertex has degree at least 7. Consider the Bernoulli($p$) site percolation of $G$. Then 
\begin{enumerate}[label=(\Alph*)]
\item $p_c^{site}<\frac{1}{2}$.
\item For every $p$ in  the range $(p_c^{site}, 1-p_c^{site})$,  there  are  infinitely  many  infinite  open clusters and infinitely many infinite closed clusters a.s.   
\item For every $p$ in the range $[0,1]$, a.s.~there exists at least 1 infinite open or closed cluster.
\end{enumerate}
\end{lemma}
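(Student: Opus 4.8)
The plan is to reduce to the case where $G$ is a one-ended Archimedean tiling of $\HH^{2}$, to establish (A) by planar duality for site percolation, and then to obtain (B) and (C) from (A) via Lemmas~\ref{p25} and~\ref{n1f}. For the reduction: since $G$ is vertex-transitive all its vertices share a common degree $d\ge 7$, and since each face has at least three sides the combinatorial curvature at every vertex is at most $1-\tfrac d2+\tfrac d3=1-\tfrac d6\le-\tfrac16$; combinatorial Gauss--Bonnet then gives a linear isoperimetric inequality, so $G$ is non-amenable and hence has one end or infinitely many ends (a two-ended transitive graph is quasi-isometric to $\ZZ$, hence amenable). The infinitely-ended case is easier and is not needed elsewhere in the paper, so I assume $G$ is one-ended. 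By Lemma~\ref{p21}, $G$ is then an Archimedean tiling all of whose automorphisms are induced by isometries, with finite vertex-stabilizers, so $G$ is unimodular; and since $\sum_i\tfrac1{m_i}\le\tfrac d3<\tfrac{d-2}2$ exactly when $d\ge 7$, Lemma~\ref{l22}(3) puts this tiling in $\HH^{2}$. So from now on $G$ is a non-amenable, one-ended, unimodular, transitive planar graph properly embedded in $\HH^{2}$.

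Part (A) is the crux. The tool is planar crossing-duality for site percolation: in a topological annulus, an open circuit of $G$ separating the two boundary components and a closed circuit of the matching graph $G^{\dagger}$ (same vertex set, with $u\sim v$ in $G^{\dagger}$ iff $u\sim v$ in $G$ or $u,v$ lie on a common face of $G$) that separates them cannot coexist, and, letting the annulus grow, exactly one of them occurs almost surely. Combined with the trichotomy of Lemma~\ref{p25} (Bernoulli percolation is insertion-tolerant) and a $0$--$1$ law, this yields the duality identity $p_c^{site}(G)=1-p_u^{site}(G^{\dagger})$. If $G$ is a triangulation it is self-matching, $G^{\dagger}=G$, so $p_c^{site}(G)+p_u^{site}(G)=1$; since $G$ is non-amenable and planar the non-uniqueness phase is non-degenerate, $p_c^{site}(G)<p_u^{site}(G)$, and therefore $2p_c^{site}(G)<1$. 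For a general Archimedean tiling with $d\ge 7$ one runs the same crossing-duality through $G^{\dagger}$; this is exactly the hyperbolic-plane percolation analysis carried out in \cite{bs00}, and it is the step I expect to be the main obstacle: a naive Peierls/contour estimate is too lossy here, since the exponential volume growth of balls in $\HH^{2}$ overwhelms any entropy bound on contours, so the crossing-duality input is genuinely needed.

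Given (A), parts (B) and (C) are formal. Bernoulli$(p)$ site percolation is insertion- and deletion-tolerant, and its law is invariant under simultaneously swapping the states $0\leftrightarrow 1$ and the parameter $p\leftrightarrow 1-p$. By (A) the interval $(p_c^{site},1-p_c^{site})$ is non-empty; for $p$ in it, $p>p_c^{site}$ forces an infinite $1$-cluster almost surely (from the definition of $p_c^{site}$ and monotone coupling), and $1-p>p_c^{site}$ forces an infinite $0$-cluster almost surely (apply the previous fact after the $0\leftrightarrow1$, $p\leftrightarrow1-p$ swap). By Lemma~\ref{p25} the number of infinite $1$-clusters is almost surely $0$, $1$, or $\infty$, and likewise for $0$-clusters (apply Lemma~\ref{p25} to the complementary percolation). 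If there were almost surely a unique infinite $1$-cluster, then deletion-tolerance and Lemma~\ref{n1f}(2) would give no infinite $0$-cluster, contradicting the above; hence there are infinitely many infinite $1$-clusters. Symmetrically, a unique infinite $0$-cluster would contradict Lemma~\ref{n1f}(1) via insertion-tolerance, so there are infinitely many infinite $0$-clusters; this is (B). For (C), fix any $p\in[0,1]$: if $p>p_c^{site}$ there is an infinite $1$-cluster almost surely; otherwise $p\le p_c^{site}<\tfrac12$, so $1-p>p_c^{site}$ and there is an infinite $0$-cluster almost surely. In either case at least one infinite open or closed cluster exists almost surely.
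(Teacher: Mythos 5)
First, note that the paper does not prove Lemma \ref{lc1} internally at all: its ``proof'' is a citation to Theorem 1.7 of \cite{ZL20}, so the substance you are trying to supply is precisely the content of that external theorem, and it has to be judged on its own merits. Your reduction to a one-ended Archimedean tiling of $\HH^2$ via Lemmas \ref{p21} and \ref{l22}, and your derivation of (B) and (C) from (A) using insertion/deletion tolerance, Lemma \ref{p25} and Lemma \ref{n1f}, are fine in the one-ended case. The genuine gap is exactly where you flag ``the main obstacle'': part (A) for graphs that are not triangulations. The self-matching trick $G^{\dagger}=G$, combined with $p_c<p_u$ and the site-percolation duality, does prove $p_c^{site}<\tfrac12$ for transitive nonamenable planar triangulations, but for a degree-$\ge 7$ tiling with a face of size $\ge 4$ (e.g.\ the $\{8,8\}$ tiling) the matching graph $G^{\dagger}$ is non-planar and strictly larger than $G$, and the duality identity $p_c^{site}(G)+p_u^{site}(G^{\dagger})=1$ pushes the inequality in the wrong direction: since $G\subset G^{\dagger}$ one has $p_c^{site}(G^{\dagger})\le p_c^{site}(G)$, so combining duality with non-uniqueness on the matching side only yields $p_c^{site}(G^{\dagger})<\tfrac12$, not $p_c^{site}(G)<\tfrac12$. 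The analysis in \cite{bs00} does not close this gap (it gives the duality and $p_c<p_u$, nothing more), and ``running the same crossing-duality through $G^{\dagger}$'' is not available because $G^{\dagger}$ is not planar. Supplying an argument here is the actual hard content of Theorem 1.7 of \cite{ZL20}; as written, your proposal proves (A) only for triangulations.

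Two secondary points. You discard the infinitely-ended case as ``easier and not needed,'' but the lemma as stated has no one-endedness hypothesis, and your route to (B) uses Lemma \ref{n1f}, which requires one end, so that case would need a separate argument (e.g.\ via the tree-like structure of such graphs) rather than a dismissal. Also, your statement of the duality should be checked for which member of the matching pair carries $p_c$ and which carries $p_u$; for the triangulation case this does not matter since $G=G^{\dagger}$, but in the general case the distinction is exactly where the argument breaks.
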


\begin{proof}See Theorem 1.7 of \cite{ZL20}.
\end{proof}



Let $G$ be a connected, locally finite, transitive, non-amenable planar graph with one end. By Lemmas \ref{p21} and \ref{l22},  
we can identify the graph $G$ with its embedding in $\HH^2$ in which the action of $\Gamma$ on $G$ extends to an isometric action on $\HH^2$. Recall that $G^{+}$ is the planar dual graph of $G$.

We shall always use $*^+$ to denote the dual of $*$. If $*$ is an edge, then $*^+$ is its dual edge. If $*$ is a vertex, then $*^+$ is its dual face. If $*$ is a face, then $*^+$ is its dual vertex.

We may also consider a bond configuration $\psi\in\{0,1\}^{E(G)}$, that is, to each edge $e\in E(G)$, $\psi$ assigns a unique state in $\{0,1\}$. A contour in $\psi$ is a maximal connected set of edges of $G$ in which each edge has state 1 in $\psi$. A contour is finite (resp.\ infinite) if it contains finitely many edges (resp.\ infinitely many edges).  Each bond configuration $\psi\in\{0,1\}^{E(G)}$ also induces a bond configuration $\psi^{+}\in \{0,1\}^{E(G^+)}$ by the following rule
\begin{itemize}
\item for each $e^+\in E(G^+)$, $\psi^+(e^+)=1$ if and only if $\psi(e)=0$.
\end{itemize}

\begin{lemma}\label{l27}Let $G$ be an infinite, connected, locally finite, planar, transitive, nonamenable graph with one end. Let $\psi\in\{0,1\}^{E(G)}$ be an automorphism-invariant random bond configuration on $G$. Let $k$ be the number of infinite contours in $\psi$, and $k^+$ be the number of infinite contours in $\psi^+$. Then a.s.
\begin{eqnarray*}
(k,k^+)\in \left\{(1,0),(0,1),(1,\infty),(\infty,1),(\infty,\infty)\right\}
\end{eqnarray*}
\end{lemma}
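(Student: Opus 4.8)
The plan is to combine Lemma \ref{p25}(2) (applied to both $\psi$ and $\psi^+$, each of which is an automorphism-invariant bond percolation, hence can be viewed as a site percolation on the line graph, or handled directly by the planar-nonamenable-one-ended criterion) with the planar-duality constraint that an infinite contour in $\psi$ and an infinite contour in $\psi^+$ cannot cross. First I would note that $G^+$ is also a connected, locally finite, planar, transitive, nonamenable graph with one end (by Lemmas \ref{p21} and \ref{l22}, the dual of a nonamenable Archimedean tiling of $\HH^2$ is again such a tiling), so the structural hypotheses are symmetric in $G$ and $G^+$. Applying Lemma \ref{p25} under condition (2) to $\psi$ and separately to $\psi^+$, each of $k$ and $k^+$ lies in $\{0,1,\infty\}$ almost surely. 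This leaves nine a priori possibilities, and the task is to rule out $(0,0)$, $(0,\infty)$, $(\infty,0)$, and $(1,1)$.

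The key geometric input is the following topological fact on a properly embedded graph in $\HH^2$ (a plane in the sense defined in the introduction): a self-avoiding path of edges of $G$ separates $\HH^2$ into two pieces, and any edge of $G^+$ dual to an edge \emph{on} that path is the only way to pass between the two sides through a dual edge; more precisely, an infinite simple contour of $\psi$ is a bi-infinite self-avoiding path which disconnects $\HH^2$, and on its two sides the dual configuration $\psi^+$ cannot have an edge crossing it (since $\psi^+(e^+)=1 \iff \psi(e)=0$, and the edges of the contour have $\psi$-state $1$). Hence an infinite $\psi^+$-contour must lie entirely on one side of each infinite $\psi$-contour. I would use this to show: if $k=0$ (no infinite $\psi$-contour), then ``most'' edges are in the $0$-state region in a way that forces $\psi^+$ to have an infinite contour — this is where one invokes that $G$ has one end, so the complement of the finite $\psi$-contours cannot ``trap'' all infinite dual connectivity; concretely, one argues that if every contour of $\psi$ is finite, then the set of edges with $\psi=0$ contains an infinite connected subgraph of $G^+$-type structure, giving $k^+\geq 1$, so $(0,0)$ is impossible. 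By the symmetric argument $(0,0)$ is the only case with both zero, and it is excluded; this also handles the asymmetry: $k=0$ forces $k^+\geq 1$ and in fact $k^+=\infty$ would need separate treatment, but since $k=0$ already forces $k^+\neq 0$, the remaining subcase $(0,1)$ and $(0,\infty)$ are both allowed — wait, $(0,1)$ and $(0,\infty)$ do appear in the claimed list, so I only need to exclude $(0,0)$ here. Similarly $(\infty,0)$ reduces to $(0,\infty)$ by duality but with roles swapped, and this too is in the list. So the genuine exclusions are $(0,0)$ and $(1,1)$.

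To exclude $(1,1)$: suppose $\psi$ has a unique infinite contour $\mathcal C$ and $\psi^+$ has a unique infinite contour $\mathcal C^+$. The bi-infinite path $\mathcal C$ separates $\HH^2$ into two halves $H_1, H_2$, each containing infinitely many vertices (since $G$ is transitive with one end, both sides are infinite). By the non-crossing fact, $\mathcal C^+$ lies entirely in, say, $H_1$. Now apply an automorphism $\gamma\in\Gamma$ — or rather use the mass-transport / ergodicity-style argument: by transitivity we can translate $\mathcal C$ far into $H_2$; the translated unique infinite $\psi$-contour $\gamma\mathcal C$ must equal $\mathcal C$ (uniqueness), which is a contradiction once the translation is chosen to move $\mathcal C$ off itself — this requires a little care since $\mathcal C$ is infinite, but non-amenability gives that $\mathcal C$ has linear-in-volume edge boundary in $G$ while translations of $G$ act with infinite orbits, so a suitable $\gamma$ moves $\mathcal C$ to a disjoint bi-infinite path; then both $\mathcal C$ and $\gamma\mathcal C$ are infinite $\psi$-contours only if $\psi$ is not $\Gamma$-invariant in distribution with a unique such contour — formally, uniqueness of the infinite contour is a $\Gamma$-invariant event, so $\mathcal C$ is a $\Gamma$-invariantly-defined infinite set, and a transitive group action on a one-ended nonamenable graph cannot fix such a set; this contradiction excludes $(1,1)$. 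I expect the main obstacle to be exactly this last step: making the ``a unique infinite contour in both $\psi$ and $\psi^+$ is impossible'' argument rigorous, since it is the place where planarity, one end, and nonamenability must all be used together, and one must be careful that $\psi$ and $\psi^+$ are determined by the \emph{same} randomness (they are deterministic functions of each other), so the two uniqueness events are not independent and the exclusion must be genuinely geometric rather than probabilistic. The cleanest route is probably to quote or adapt the Burton–Keane-type uniqueness-of-ends argument for infinite clusters in planar nonamenable one-ended graphs already used to establish \eqref{frc}–\eqref{wrc} and Lemma \ref{p25}(2), applied simultaneously to the contour and its dual.
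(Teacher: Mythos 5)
Your proof does not establish the lemma as stated, and the paper itself does not give an argument either: it simply quotes Theorem 3.1 of \cite{bs96} (Benjamini--Schramm), so a blind proposal has to reprove that theorem in full. The first genuine gap is that you misread the target list. The lemma excludes $(0,\infty)$ and $(\infty,0)$ --- they do not appear in $\{(1,0),(0,1),(1,\infty),(\infty,1),(\infty,\infty)\}$ --- yet midway through you declare them ``allowed'' and conclude that only $(0,0)$ and $(1,1)$ need to be ruled out. These two exclusions are not formalities: they encode the uniqueness half of planar duality, namely that if all contours of $\psi$ are finite then $\psi^+$ has \emph{exactly one} infinite contour. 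Your $(0,0)$ sketch, even if completed, would only give $k^+\geq 1$; to get $k^+=1$ one must argue that two distinct infinite contours of $\psi^+$ would be separated by an infinite contour of $\psi$ (and symmetrically with the roles of $\psi$ and $\psi^+$ exchanged), an argument that never appears in your proposal. So the statement you would prove is strictly weaker than the lemma.

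The $(1,1)$ exclusion is also invalid as written, for two reasons. First, an infinite contour of $\psi$ is a maximal connected set of state-$1$ edges, i.e.\ an arbitrary infinite connected subgraph; it is not a doubly infinite self-avoiding path and need not split $\HH^2$ into two half-planes, so the topological setup of your argument collapses. (The paper establishes such a path structure only for the interface $\eta_\omega$ in $\ol{G}^+$, Lemma \ref{l41}, where degree constraints force it --- not for contours of a general bond configuration.) Second, the invariance step is a non sequitur: for an automorphism $\gamma$, the set $\gamma\mathcal{C}$ is the unique infinite contour of the \emph{transformed} configuration $\gamma\psi$, not of $\psi$, so uniqueness does not force $\gamma\mathcal{C}=\mathcal{C}$, and ``a transitive action cannot fix an invariantly-defined infinite random set'' is false as a principle --- Bernoulli percolation in the uniqueness phase on these very graphs has an equivariantly defined unique infinite cluster. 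The genuine exclusion of $(1,1)$ (and of $(0,0)$, $(0,\infty)$, $(\infty,0)$) in Benjamini--Schramm uses a mass-transport/Euler-characteristic-type argument exploiting nonamenability and one-endedness simultaneously for $\psi$ and $\psi^+$; nothing in your sketch substitutes for it. The cleanest repair is to do what the paper does and cite that theorem directly, or to reproduce its proof rather than the translation argument you propose.
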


\begin{proof}See Theorem 3.1 of \cite{bs96}.
\end{proof}

\begin{definition}(Stochastic Domination) Let $G=(V(G),E(G))$ be a graph. Let $\Omega=\{0,1\}^{E(G)}$ (resp.\ $\Omega=\{0,1\}^{V(G)}$). Then the configuration space $\Omega$ is a partially ordered set with partial order given by $\omega_1\leq \omega_2$ if $\omega_1(e)\leq \omega_2(e)$ for all $e\in E(G)$ (resp.\ $\omega_1(v)\leq \omega_2(v)$ for all $v\in V(G)$). A random variable $X:\Omega\rightarrow\RR$ is  called increasing if $X(\omega_1)\leq X(\omega_2)$ whenever $\omega_1\leq \omega_2$. An  event $A\subset\Omega$ is called increasing (respectively, decreasing) if its indicator function $1_A$ is increasing  (respectively, decreasing). Given two probability measures $\mu_1$, $\mu_2$ on $\Omega$, we write $\mu_1\prec \mu_2$, and we say that $\mu_2$ \textbf{stochastically dominates} $\mu_1$, if $\mu_1(A)\leq \mu_2(A)$ for all increasing events $A\subset \Omega$.
\end{definition}

\begin{lemma}(Holley inequality)\label{hln} Let $G=(V(G),E(G))$ be a finite graph. Let $\Omega=\{0,1\}^{V(G)}$. Let $\mu_1$ and $\mu_2$ be strictly positive probability measures on $\Omega$ such that
\begin{eqnarray}
\mu_2(\max\{\omega_1,\omega_2\})\mu_1(\min(\omega_1,\omega_2))\geq \mu_1(\omega_1)\mu_2(\omega_2),\qquad \omega_1,\omega_2\in\Omega,\label{hli}
\end{eqnarray}
Then 
\begin{eqnarray*}
\mu_1\prec \mu_2.
\end{eqnarray*}
\end{lemma}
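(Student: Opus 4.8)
The plan is to prove the Holley inequality in the classical way, by building an explicit monotone coupling of $\mu_1$ and $\mu_2$ through a coupled single-site (heat-bath) dynamics. Write $\eta^{v,a}$ for the configuration agreeing with $\eta$ on $V(G)\setminus\{v\}$ and taking the value $a\in\{0,1\}$ at $v$, and for each $i\in\{1,2\}$ and $v\in V(G)$ set
\[
p_i(v,\eta)=\frac{\mu_i(\eta^{v,1})}{\mu_i(\eta^{v,0})+\mu_i(\eta^{v,1})},
\]
which depends only on the restriction of $\eta$ to $V(G)\setminus\{v\}$; strict positivity of $\mu_i$ makes this well defined and valued in $(0,1)$. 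Let $X^{(i)}_t$ be the continuous-time Markov chain on $\Omega$ in which each vertex $v$ carries a rate-$1$ Poisson clock and, when the clock at $v$ rings, the value at $v$ is resampled to $1$ with probability $p_i(v,\cdot)$ and to $0$ otherwise. A one-line detailed-balance computation shows $X^{(i)}_t$ is reversible for $\mu_i$, and strict positivity makes it irreducible, so the law of $X^{(i)}_t$ converges to $\mu_i$ as $t\to\infty$ from any starting configuration.

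I would then couple the two chains by common randomness: run one family of rate-$1$ Poisson clocks indexed by $V(G)$, and at each ring of the clock at $v$ draw a single uniform $U\in[0,1]$, updating a pair $(X_t,Y_t)\in\Omega\times\Omega$ by setting $X_t(v)=1$ iff $U\le p_1(v,X_t)$ and $Y_t(v)=1$ iff $U\le p_2(v,Y_t)$ (all other coordinates unchanged). Marginally this is exactly $X^{(1)}_t$ and $X^{(2)}_t$. The crux is the monotonicity claim: whenever $\pi\le\omega$ one has $p_1(v,\pi)\le p_2(v,\omega)$ for every $v$. Clearing denominators, the claim is equivalent to $\mu_1(\pi^{v,1})\,\mu_2(\omega^{v,0})\le\mu_2(\omega^{v,1})\,\mu_1(\pi^{v,0})$, and this is precisely hypothesis (\ref{hli}) applied to $\omega_1=\pi^{v,1}$ and $\omega_2=\omega^{v,0}$: since $\pi\le\omega$ we have $\max\{\pi^{v,1},\omega^{v,0}\}=\omega^{v,1}$ and $\min\{\pi^{v,1},\omega^{v,0}\}=\pi^{v,0}$, so (\ref{hli}) reads $\mu_2(\omega^{v,1})\,\mu_1(\pi^{v,0})\ge\mu_1(\pi^{v,1})\,\mu_2(\omega^{v,0})$.

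Granting the claim, each update preserves the partial order: if $X_t\le Y_t$ just before a ring at $v$, then because the same $U$ is used and $p_1(v,X_t)\le p_2(v,Y_t)$, we get $X_t(v)\le Y_t(v)$ afterwards. Starting from $(X_0,Y_0)=(\mathbf{0},\mathbf{1})$, the all-$0$ and all-$1$ configurations, the coupled chain thus stays in the closed set $R=\{(\pi,\omega)\colon\pi\le\omega\}$ for all $t\ge 0$. As a continuous-time finite-state chain started from a fixed state, $(X_t,Y_t)$ converges in distribution to some probability measure $\nu$, necessarily supported on $R$; its $X$- and $Y$-marginals are the limits of the marginal chains started at $\mathbf{0}$ and $\mathbf{1}$, hence equal $\mu_1$ and $\mu_2$. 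Therefore $\nu$ is a coupling of $\mu_1$ and $\mu_2$ concentrated on $\{\pi\le\omega\}$, and for every increasing event $A$, $\mu_1(A)=\nu(\pi\in A)\le\nu(\omega\in A)=\mu_2(A)$; that is, $\mu_1\prec\mu_2$.

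The only place the hypothesis enters is the monotonicity claim for the single-site conditional probabilities, and that is the step I expect to be the real content: the delicate bookkeeping there is that $\max\{\pi^{v,1},\omega^{v,0}\}=\omega^{v,1}$ and $\min\{\pi^{v,1},\omega^{v,0}\}=\pi^{v,0}$ under $\pi\le\omega$, so that (\ref{hli}) applies with exactly the right arguments, together with checking that each coordinate of the coupled process is genuinely Markov with the stated heat-bath dynamics. Everything else — reversibility and convergence of the heat-bath chains, order preservation, and extracting stochastic domination from a monotone coupling — is routine. Alternatively, one can avoid dynamics entirely and prove $\mu_1(A)\le\mu_2(A)$ for all increasing $A$ directly by induction on $|V(G)|$, conditioning on the value at a fixed vertex; this again reduces to the same two-point inequality but is more combinatorial.
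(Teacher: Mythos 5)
Your argument is correct: the single-site claim $p_1(v,\pi)\le p_2(v,\omega)$ for $\pi\le\omega$ follows exactly as you say from (\ref{hli}) with $\omega_1=\pi^{v,1}$, $\omega_2=\omega^{v,0}$, and the rest (order-preserving coupling started from $(\mathbf{0},\mathbf{1})$, convergence of the finite-state continuous-time chains, and extraction of a monotone coupling) is sound. The paper itself does not prove the lemma but cites Theorem 2.1 of \cite{GrGrc} (and \cite{HR74}), whose proof is precisely this coupled single-site Markov chain construction, so your proposal follows essentially the same route as the cited source.
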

\begin{proof}See Theorem (2.1) of \cite{GrGrc}; see also \cite{HR74}.
\end{proof}


\begin{lemma}\label{liff}Let $G$ be a graph, $J>0$, and $p=1-e^{-2J}$. There is no infinite cluster $WRC_{p,2}$ a.s.~if and only if there is a unique Gibbs measure for the Ising model with coupling constant $J$.
\end{lemma}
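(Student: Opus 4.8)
The plan is to connect ``non-uniqueness of the Ising Gibbs measure'' with ``existence of an infinite cluster in $WRC_{p,2}$'' through the classical magnetization identity $\langle\sigma_v\rangle_{\mu^+}=WRC_{p,2}(v\leftrightarrow\infty)$, where $\mu^+$ is the $+$-boundary-condition Gibbs measure and $p=1-e^{-2J}$. Concretely I would establish the chain
\[
\text{no infinite cluster in }WRC_{p,2}\text{ a.s.}\iff \langle\sigma_v\rangle_{\mu^+}=0\ \ \forall v\iff \mu^+=\mu^-\iff \text{unique Gibbs measure},
\]
taking up the links in turn.

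\emph{The Ising side.} Since $J>0$ the interaction is ferromagnetic, so by Holley's inequality (Lemma~\ref{hln}) in finite volumes together with the DLR equations, the weak limits $\mu^+$ (monotone limit of $+$-boundary measures) and $\mu^-$ ($-$-boundary measures) exist and every Gibbs measure $\mu$ satisfies $\mu^-\preceq\mu\preceq\mu^+$; hence the Gibbs measure is unique if and only if $\mu^+=\mu^-$. By the global spin flip $\sigma\mapsto-\sigma$, $\mu^-$ is the image of $\mu^+$, so $\langle\sigma_v\rangle_{\mu^-}=-\langle\sigma_v\rangle_{\mu^+}$; thus $\mu^+=\mu^-$ forces $\langle\sigma_v\rangle_{\mu^+}=0$ for all $v$. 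Conversely, Strassen's theorem gives a monotone coupling $(X,Y)$ with $X\sim\mu^-$, $Y\sim\mu^+$, $X\le Y$ coordinatewise; if $\langle\sigma_v\rangle_{\mu^+}=0$ for all $v$, then each $Y_v-X_v\ge 0$ has mean zero, hence $X=Y$ a.s., hence $\mu^+=\mu^-$. This handles the last two links. The first link is a countable union bound in both directions: $WRC_{p,2}(v\leftrightarrow\infty)=0$ for every $v$ is equivalent to the a.s.\ absence of an infinite cluster.

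\emph{The bridge.} Running the Edwards--Sokal coupling of Lemma~\ref{l213}(B) in a finite volume $\Lambda$ with wired boundary, but assigning the spin $+$ (rather than a fair coin) to the cluster of the wired boundary $\partial\Lambda$ and fair coins to all other clusters, produces the finite-volume Ising measure with $+$-boundary conditions; since the spins of clusters are chosen independently of the edge configuration, $\langle\sigma_v\rangle^+_\Lambda$ equals $RC^{w}_{p,2,\Lambda}(v\leftrightarrow\partial\Lambda)$. Letting $\Lambda\uparrow G$: the left side tends to $\langle\sigma_v\rangle_{\mu^+}$ by definition of $\mu^+$; for the right side, $\{v\leftrightarrow\partial\Lambda\}\subseteq\{v\leftrightarrow\partial\Delta\}$ for any fixed finite $\Delta\subseteq\Lambda$, so using the convergence $RC^{w}_{p,2,\Lambda}\to WRC_{p,2}$ on the local event $\{v\leftrightarrow\partial\Delta\}$, then sending $\Delta\uparrow G$, together with the monotone decrease of the wired measures in the box size for the matching lower bound, the right side tends to $WRC_{p,2}(v\leftrightarrow\infty)$. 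This is the magnetization identity $\langle\sigma_v\rangle_{\mu^+}=WRC_{p,2}(v\leftrightarrow\infty)$, and combining it with the reductions of the previous paragraph completes the proof; compare \cite{HJL02}.

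I expect the only delicate point to be the interchange of the infinite-volume limit with the Edwards--Sokal coupling in the bridge step, specifically the identification of $\lim_{\Lambda\uparrow G}RC^{w}_{p,2,\Lambda}(v\leftrightarrow\partial\Lambda)$ with $WRC_{p,2}(v\leftrightarrow\infty)$; here the squeeze via the local events $\{v\leftrightarrow\partial\Delta\}$ and the monotone decrease of the wired random-cluster measures as $\Lambda$ grows do the work, and the argument is classical.
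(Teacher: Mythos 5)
Your argument is correct: the chain (no infinite cluster in $WRC_{p,2}$) $\iff$ ($\langle\sigma_v\rangle_{\mu^+}=0$ for all $v$) $\iff$ ($\mu^+=\mu^-$) $\iff$ (uniqueness), established via the wired Edwards--Sokal coupling of Lemma \ref{l213}(B), the Holley/FKG sandwich $\mu^-\prec\mu\prec\mu^+$, and Strassen's monotone coupling, is the standard proof of this equivalence, and the limit identification $\lim_{\Lambda\uparrow G}RC^{w}_{p,2,\Lambda}(v\leftrightarrow\partial\Lambda)=WRC_{p,2}(v\leftrightarrow\infty)$ that you flag is handled correctly by your squeeze plus the stochastic monotonicity of the wired measures. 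The paper does not prove the lemma itself but simply cites Proposition 3.2(i) of \cite{HJL02}, and your write-up is essentially the argument underlying that reference, so there is no substantive difference in approach.
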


\begin{proof}See Proposition 3.2 (i) of \cite{HJL02}.
\end{proof}

\begin{definition}\label{idd}Let $G=(V,E)$ be a graph and $\Gamma$ a transitive group acting on $G$. Suppose that $X$ is either $V$, $E$ or $V\cup E$. Let $Q$ be a measurable space and $\Omega:=2^V\times Q^X$. A probability measure $\mathbf{P}$ on $\Omega$ will be called a \textbf{site percolation with scenery} on $G$. The projection onto $2^V$ is the underlying percolation and the projection onto $Q^X$ is the scenery. If $(\omega,q)\in \Omega$, we set $\Pi_v(\omega,q)=(\Pi_v \omega,q)$. We say the percolation with scenery $\mathbf{P}$ is \textbf{insertion-tolerant} if $\mathbf{P}(\Pi_v\mathcal{B})>0$ for every measurable $\mathcal{B}\subset \Omega$ with positive measure. We say that $\mathbf{P}$ has \textbf{indistinguishable infinite clusters} if for every $\mathcal{A}\subset 2^V\times 2^V\times Q^X$ that is invariant under diagonal actions of $\Gamma$, for $\mathbf{P}$-a.e. $(\omega,q)$, either all infinite clusters $C$ of $\omega$ satisfy $(C,\omega,q)\in \mathcal{A}$, or they all satisfy $(C,\omega,q)\notin \mathcal{A}$.
 \end{definition}
 
 \begin{proposition}\label{idc}Let $\mathbf{P}$ be a site percolation with scenery on a graph $G=(V,E)$ with state space $\Omega:=2^V\times Q^X$, where $Q$ is a measurable space and $X$ is either $V$, $E$ or $V\cup E$. If $\mathbf{P}$ is $\Gamma$-invariant and insertion-tolerant, then $\mathbf{P}$ has indistinguishable infinite clusters.
 \end{proposition}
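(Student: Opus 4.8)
The plan is to run the Lyons--Schramm indistinguishability argument for insertion-tolerant invariant percolation, taking care that $\Gamma$ is assumed transitive but not necessarily unimodular, so that the mass-transport principle gets replaced throughout by the stationarity of the environment seen from a delayed simple random walk.

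First I would dispose of the easy cases and set up the contradiction. By Lemma~\ref{p25} the number of infinite $1$-clusters of $\omega$ is $\mathbf{P}$-a.s.\ in $\{0,1,\infty\}$; the conclusion is vacuous when this number is $0$ and immediate when it is $1$, and the event ``infinitely many infinite clusters'' is $\Gamma$-invariant and stable under any single insertion, so the conditional measure on that event is again $\Gamma$-invariant and insertion-tolerant. Thus I may assume $\omega$ has a.s.\ infinitely many infinite clusters, and in particular (by transitivity) $\mathbf{P}(o\in\omega,\ C(o)\text{ infinite})>0$, where $o$ is a fixed base vertex and $C(o)$ its cluster. Suppose, toward a contradiction, that clusters are \emph{not} indistinguishable: there is a diagonally $\Gamma$-invariant measurable $\mathcal{A}\subset 2^V\times 2^V\times Q^X$ and a positive-probability event on which $\omega$ carries simultaneously an infinite cluster $C$ with $(C,\omega,q)\in\mathcal{A}$ and an infinite cluster $C'$ with $(C',\omega,q)\notin\mathcal{A}$; by transitivity, $\mathbf{P}(o\in\omega,\ C(o)\text{ infinite and of type }\mathcal{A})>0$ and likewise for type $\mathcal{A}^c$.

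The heart of the argument is to upgrade $\mathcal{A}$ to a \emph{robust} distinguishing property: informally, an infinite cluster $C(v)$ should be ``robustly of type $\mathcal{A}$ from $v$'' if the conditional probability of $(C(v),\omega,q)\in\mathcal{A}$ given the configuration $(\omega,q)$ outside the ball $B_R(v)$ stays bounded away from $0$ as $R\to\infty$ (an appropriate $\liminf$/averaged version; such conditioning is meaningful because insertion and deletion tolerance give the conditional laws full support inside the finite block $B_R(v)$). Robustness of a cluster is again a diagonally invariant cluster property, and one wants: $\mathbf{P}$-a.s.\ every infinite $\mathcal{A}$-cluster is robustly $\mathcal{A}$ and every infinite $\mathcal{A}^c$-cluster is robustly $\mathcal{A}^c$, so that $\mathcal{A}$ may be assumed robust --- and this is where the stationarity of the delayed-random-walk environment on a cluster, together with a reverse-martingale (L\'evy upward) argument, does the work. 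Robustness is exactly what makes insertion tolerance constructive: if $x\notin\omega$ is adjacent to a robustly-$\mathcal{A}^c$ cluster $C'$, then with positive conditional probability one may put $x\in\omega$, keeping the new cluster robustly $\mathcal{A}^c$ and still containing all of $C'$; iterating along a finite path (enlarging the scenery with auxiliary randomness, which is legitimate), one can, with positive probability under $\mathbf{P}$, graft a robustly-$\mathcal{A}^c$ cluster onto another infinite cluster so that the merged cluster remains robustly $\mathcal{A}^c$, and symmetrically with the two types interchanged.

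Finally, these two facts collide. Run the delayed simple random walk $X_0=o,X_1,X_2,\dots$ on $C(o)$ under $\mathbf{P}(\cdot\mid o\in\omega,\ C(o)\text{ infinite})$; this walk is reversible for the counting measure on the cluster, so (a standard consequence of this reversibility and the $\Gamma$-invariance of $\mathbf{P}$) the sequence of environments re-rooted at $X_n$ is stationary. Combining Birkhoff's ergodic theorem in $n$ with the reverse-martingale convergence, as $R\to\infty$, of $\mathbf{P}\big((C(o),\omega,q)\in\mathcal{A}\mid\mathcal{F}_{B_R(o)}\big)$ to $\mathbf{1}[(C(o),\omega,q)\in\mathcal{A}]$, one obtains: on any $\mathcal{A}$-cluster the $\mathbf{P}$-probability that the radius-$R$ block around the root ``looks like'' a cluster of type $\mathcal{A}^c$ (say, that the conditional probability of type $\mathcal{A}$ given that block is below $\tfrac12$) tends to $0$ as $R\to\infty$, and symmetrically for $\mathcal{A}^c$-clusters. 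But the grafting above produces, with probability bounded below \emph{uniformly in $R$}, a single infinite cluster that globally is of one definite type yet, rooted deep inside the grafted piece (which was a robustly opposite-type cluster sitting far from the finite surgery), has a radius-$R$ block distributed exactly like that of a cluster of the opposite type --- contradicting the vanishing just established. Hence no diagonally invariant $\mathcal{A}$ distinguishes the infinite clusters. The hard part, where essentially all the work lies, is the robustness step: pinning down the notion of a ``robust'' cluster property so that it is both inherited $\mathbf{P}$-a.s.\ by the clusters of each type and preserved under the insertion-tolerant grafting, while everything else (the easy-case reduction, the reverse-martingale convergence, and the use of the delayed-walk environment in place of the mass-transport principle) is by now routine.
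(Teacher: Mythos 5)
The paper does not reprove this statement at all: it is quoted directly from Lyons--Schramm (Theorem 3.3 and Remark 3.4 of \cite{LS99}), so what you are attempting is a from-scratch proof of that theorem. Your sketch follows the Lyons--Schramm strategy, but it has a genuine gap at the point where you claim to dispense with unimodularity. The stationarity of the environment seen from the delayed simple random walk is not a substitute for the mass-transport principle --- in \cite{LS99} it is \emph{derived} from the mass-transport principle, hence from the assumption that $\Gamma$ is a transitive \emph{unimodular} group; for a nonunimodular transitive action the delayed-walk environment is not stationary under $\mathbf{P}(\cdot\mid o\in\omega)$ (one must tilt by the modular function), and indistinguishability in that setting is a genuinely harder problem that this argument does not reach. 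So your ``taking care that $\Gamma$ is \dots not necessarily unimodular'' is circular as written. The correct repair is simply to add the unimodularity hypothesis (as in \cite{LS99}); this costs nothing for the paper, since a connected, transitive, planar graph with one end is automatically unimodular, a fact the paper already uses in the proof of Lemma \ref{l43}.

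Separately, the step you yourself flag as ``where essentially all the work lies'' is asserted rather than proved, and it is exactly the crux of the Lyons--Schramm argument: when you insert vertices along a path to graft a robustly-$\mathcal{A}^c$ cluster onto another infinite cluster, the configuration $(\omega,q)$ changes, so the merged cluster is a \emph{new} cluster in a \emph{new} configuration, and nothing in your definition of robustness guarantees that its type, or its robustness, is preserved; the clash you want with the martingale convergence of $\mathbf{P}\bigl((C(o),\omega,q)\in\mathcal{A}\mid\mathcal{F}_{B_R(o)}\bigr)$ therefore does not yet follow. In \cite{LS99} this is handled by the pivotal-vertex argument combined with stationarity and the fact that the walk eventually reads the type off the configuration far from the finite surgery region; reproducing that is the substantive content of the theorem. (There is also a minor confusion between conditioning on the configuration inside $B_R(v)$, an increasing filtration where L\'evy's upward theorem gives convergence to the indicator, and conditioning on the configuration outside $B_R(v)$, a decreasing one, in your definition of robustness.) For the purposes of this paper, the citation of Theorem 3.3 and Remark 3.4 of \cite{LS99}, with the unimodularity hypothesis noted and verified for planar one-ended transitive $G$, is both sufficient and much shorter.
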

 
 \begin{proof}See Theorem 3.3, Remark 3.4 of \cite{LS99}.
 \end{proof}

\section{Numbers of Infinite Clusters and Infinite Contours}\label{p11}

This section is devoted to the proof of Theorem \ref{m1} about the possible numbers of infinite clusters and infinite contours in a percolation configuration. The idea is to list all the possible values of $(s_0,s_2,k^{+})$, and then exclude those that a.s.~cannot occur by planarity, ergodicity and symmetry.

By Lemma \ref{p25}, a.s.
\begin{eqnarray*}
(s_0,s_1)\in\{(0,0),(0,1),(0,\infty),(1,0),(1,1),(1,\infty),(\infty,0),(\infty,1),(\infty,\infty)\}.
\end{eqnarray*}
Since the probability measure is both insertion-tolerant and deletion-tolerant, by Lemma \ref{n1f} a.s.
\begin{eqnarray*}
(s_0,s_1)\in\{(0,0),(0,1),(0,\infty),(1,0),(\infty,0),(\infty,\infty)\}.
\end{eqnarray*}
We shall investigate the number $k^+$ of infinite contours in $\phi^+_{\omega}$ for each pair of possible values of $(s_0,s_1)$. Without loss of generality, assume that $\mu$ is ergodic. Note that each configuration $\omega$ induces a bond configuration $\phi_{\omega}\in \{0,1\}^{E(G)}$, such that for each $e=(u,v)\in E(G)$, $\phi_{\omega}(e)=1$ if and only if $\omega(u)=\omega(v)$. Let $k$ be the total number of infinite contours in $\phi_{\omega}$. Then $k=s_0+s_1$.

Let $\overline{G}$ be the superposition of $G$ and $G^{+}$. More precisely, each vertex of $\overline{G}$ is either a vertex of $G$, a vertex of $G^{+}$ or the midpoint of an edge of $G$. Two vertices $u,v$ of $\overline{G}$ are joined by an edge of $\overline{G}$ if and only if one of the following two conditions holds.
\begin{enumerate}
\item $u$ is a vertex of $G$, and $v$ is the midpoint of an edge $e(v)$ of $G$, such that $e(v)$ is incident to $u$, or vice versa;
\item $u$ is a vertex of $G^{+}$, and $v$ is the midpoint of an edge $e^+(v)$ of $G^+$, such that $e^+(v)$ is incident to $u$, or vice versa.
\end{enumerate}

Let $\overline{G}^{+}$ be the dual graph of $\overline{G}$. Since in $\overline{G}$ each face has degree 4, in $\overline{G}^+$ each vertex has degree 4.  See Figure \ref{fig:spd}.

\begin{figure}
\includegraphics[width=.4\textwidth]{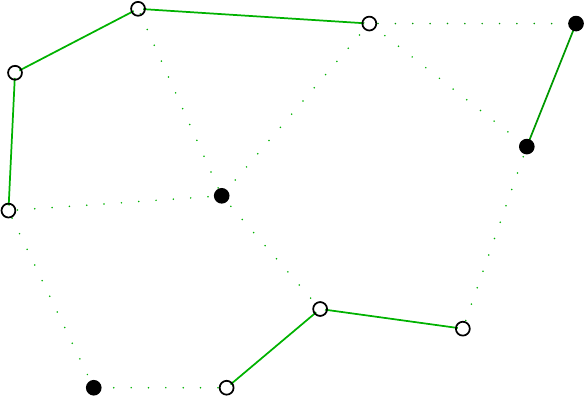}\qquad\qquad \includegraphics[width=.4\textwidth]{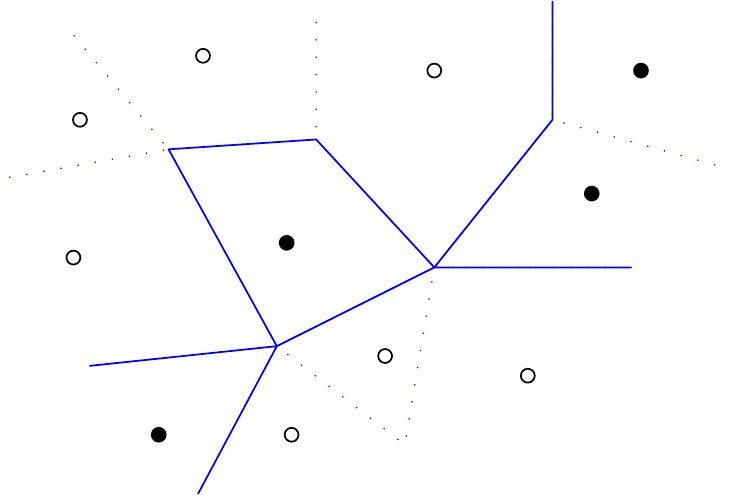}\\
\bigskip
\includegraphics[width=.4\textwidth]{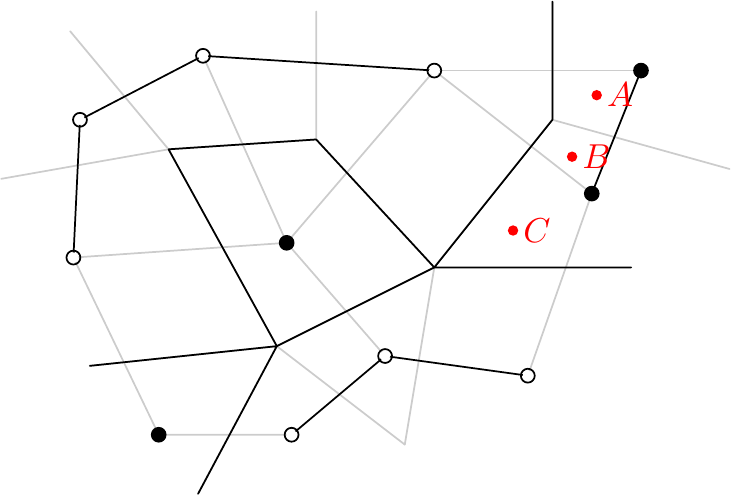}\qquad\qquad \includegraphics[width=.4\textwidth]{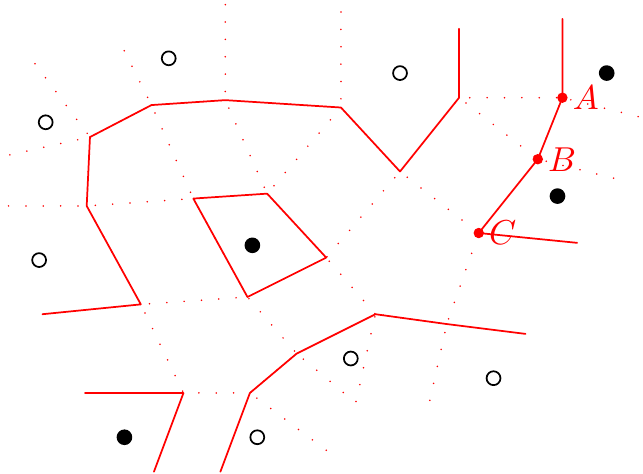}
\caption{State-1 vertices in $\omega$ are represented by dots; state-0 in $\omega$ are represented by circles. The upper left figure represents the graph $G$; state-1 edges in $\phi_{\omega}$ are represented by solid green lines; state-0 edges in $\phi_{\omega}$ are represented by dotted green lines.  The upper right figure represents the graph $G^+$; state-1 edges in $\phi^+_{\omega}$ are represented by solid blue lines; state-0 edges in $\phi^+_{\omega}$ are represented by dotted blue lines. The lower left figure represents the graph $\ol{G}$; state-1 edges in $\ol{\phi}_{\omega}$ are represented by black lines; state-0 edges in $\ol{\phi}_{\omega}$ are represented by grey lines The lower right figure represents the graph $\ol{G}^+$; state-1 edges in $\eta_{\omega}$ are represented by solid red lines; state-0 edges in $\eta_{\omega}$ are represented by dotted red lines.}
\label{fig:spd}
\end{figure}

Note that each vertex of $G^{+}$ has an even degree in the subgraph $\phi^+_{\omega}$. That is because an edge in $E(G^+)$ is present (has state 1) in $\phi^+_{\omega}$ if and only if it separates two vertices of $G$ with different state in $\omega$. Winding once around each face of $G$, the states of vertices must change an even number of times to make sure that each vertex has a unique state in $\omega$.

The configurations $\phi_{\omega}$ and $\phi^+_{\omega}$ naturally induce a bond configuration $\overline{\phi}_{\omega}\in \{0,1\}^{E(\overline{G})}$, where for each $f\in E(\overline{G})$, $\overline{\phi}_{\omega}(f)=1$ if and only if $f$ is the half edge of an edge $e\in E(G)\cup E(G^+)$ satisfying either $\phi_{\omega}(e)=1$ or $\phi^+_{\omega}(e)=1$.

 We define the interface $\eta_{\omega}$ for $\omega$ to be a bond configuration in $\{0,1\}^{E(\overline{G}^{+})}$, where an edge $f\in E(\overline{G}^+)$ satisfies $\eta_{\omega}(f)=1$ if and only if $\overline{\phi}_{\omega}(f^+)=0$. A contour in $\phi_{\omega}$ (resp.\ $\phi^{+}_{\omega}$, $\ol{\phi}_{\omega}$, $\eta_{\omega}$) is a maximal connected set of edges in $E(G)$ (resp.\ $E(G^+)$, $E(\ol{G})$, $E(\ol{G}^+)$) such that each edge has state 1 in $\phi_{\omega}$ (resp.\ $\phi^{+}_{\omega}$, $\ol{\phi}_{\omega}$, $\eta_{\omega}$).
 
Throughout this section we shall use the following notation:
\begin{itemize}
\item $s_0$: the total number of infinite 0-clusters in $\omega$;
\item $s_1$: the total number of infinite 1-clusters in $\omega$;
\item $k$: the total number of infinite contours in $\phi_{\omega}$; note that $k=s_0+s_1$;
\item $k^{+}$: the total number of infinite contours in $\phi^+_{\omega}$;
\item $t$: the total number of infinite contours in $\ol{\phi}_{\omega}$; note that $t=k+k^+$;
\item $t^+$: the total number of infinite contours in $\eta_{\omega}$.
\end{itemize}

We say two infinite clusters $A$, $B$ in $\omega$ are adjacent if there exists a path $l_{ab}$, joining a vertex $a\in A$ and $b\in B$, and consisting of edges of $G$, such that $l_{ab}$ does not intersect any other infinite clusters in $\omega$. In particular, if there are exactly two infinite clusters in $\omega$, then the two infinite clusters must be adjacent.
 Let $L$ be an infinite contour in $\phi^{+}_{\omega}$. We say $A$ is incident to $L$ if there exists a vertex $z\in A$ and an edge $e^{+}$ in $L$ such that $z$ is an endpoint of the dual edge $e$ of $e^{+}$.

\begin{lemma}\label{l41}Each contour in $\eta_{\omega}$ is either a self-avoiding cycle or a doubly infinite self-avoiding path.
\end{lemma}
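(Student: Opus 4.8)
The plan is to analyze the local structure of the interface configuration $\eta_\omega$ by understanding exactly which vertices of $\ol G^+$ can be endpoints of edges with state $1$ in $\eta_\omega$, and with what multiplicity. Recall that $\eta_\omega(f)=1$ iff $\ol\phi_\omega(f^+)=0$, and that $\ol G$ (hence the dual $\ol G^+$) is $4$-regular in the relevant sense: each face of $\ol G$ has degree $4$, so each vertex of $\ol G^+$ has degree exactly $4$. Thus it suffices to show that at every vertex $w$ of $\ol G^+$, the number of $\eta_\omega$-edges incident to $w$ is $0$ or $2$. Once every vertex of the subgraph of $\ol G^+$ formed by state-$1$ edges has degree $0$ or $2$, every connected component is a path or a cycle; since $G$ is infinite, locally finite, one-ended and properly embedded, no infinite component can be a finite path, and a component that is a bi-infinite path with an endpoint is impossible, so each contour is either a finite or bi-infinite self-avoiding path or a self-avoiding cycle — and I then rule out the finite-path case separately (a contour ending at a vertex of degree $1$ would force an odd count at that vertex, contradicting the parity statement).

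The key step is therefore the local parity computation. A vertex $w\in V(\ol G^+)$ is dual to a face $F$ of $\ol G$, and $F$ has $4$ sides, each of which is a half-edge of some edge $e\in E(G)\cup E(G^+)$; the four sides carry the values $\ol\phi_\omega$ of their parent edges. I would unwind the definitions: a face of $\ol G$ of degree $4$ sits around either (i) a vertex $v$ of $G$ together with the midpoints of two consecutive edges of $G$ at $v$ and the dual vertex of the face of $G$ between them, or (ii) the symmetric picture around a vertex of $G^+$. In case (i) the two relevant sides of $F$ are half-edges of two edges $e_1,e_2\in E(G)$ sharing the endpoint $v$, and one checks $\ol\phi_\omega$ on a side equals $\phi_\omega(e_i)$; in case (ii), $\ol\phi_\omega$ on the two relevant sides equals $\phi^+_\omega$ of the two dual edges. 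The crucial input is the already-established fact (stated just before Lemma~\ref{l41}) that \emph{each vertex of $G^+$ has even degree in $\phi^+_\omega$} — equivalently, winding around a face of $G$ the spin in $\omega$ changes an even number of times — together with the dual fact that winding around a vertex $v$ of $G$, the states $\phi_\omega(e)$ over edges $e$ at $v$ and the states $\phi^+_\omega$ of the dual edges are complementary and also change an even number of times. Feeding this into the count of how many of the four sides of $F$ have $\ol\phi_\omega=0$ (and hence contribute an $\eta_\omega$-edge at $w$) yields that this count is always even, i.e.\ $0$, $2$, or $4$; a small case check, using that $\ol G$-faces around a vertex of $G$ and around a vertex of $G^+$ interleave with the ``midpoint'' faces, eliminates the value $4$ (a degree-$4$ vertex in $\eta_\omega$ would correspond to all four surrounding contour-edges present, which one sees is incompatible with the interface separating the $+$/$-$ regions consistently). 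Hence every vertex has $\eta_\omega$-degree $0$ or $2$.

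I expect the main obstacle to be the bookkeeping in this local analysis: one must set up a clean, unambiguous description of the faces of $\ol G$ (there are essentially two types, centered at a $G$-vertex and at a $G^+$-vertex, plus the interleaving structure around edge-midpoints) and carefully track which of $\phi_\omega$, $\phi^+_\omega$ determines $\ol\phi_\omega$ on each side, so that the parity inputs apply correctly. Figure~\ref{fig:spd} should be the guide here, and it may be cleanest to argue once and for all that $\eta_\omega$, viewed in $\ol G^+$, is precisely the set of edges of $\ol G^+$ crossing a $\ol\phi_\omega$-absent half-edge, so that an $\eta_\omega$-path traces the boundary between the union of state-$1$ half-edges and its complement; from that viewpoint the degree-$0$-or-$2$ property is the statement that this boundary is a $1$-manifold, which follows from the even-degree conditions. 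With the degree bound in hand, the topological conclusion — self-avoiding cycle or doubly infinite self-avoiding path, no finite paths because endpoints would have degree $1$ — is immediate, and the proof is complete.
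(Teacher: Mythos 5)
Your overall strategy, reducing the claim to a local statement that every vertex of $\ol{G}^+$ has $\eta_\omega$-degree $0$ or $2$ and then reading off that each contour is a self-avoiding cycle or a doubly infinite self-avoiding path, is the right one and matches the standard argument (the paper itself only cites Lemma 3.1 of \cite{ZL20}, which proceeds by exactly this kind of local analysis), and your sketch can be completed. However, the key local step should be organized differently from what you propose: the even-degree property of $\phi^+_\omega$ around faces of $G$ (and its analogue around vertices of $G$) is not the right input here, and on its own it would not give the parity of the count on a \emph{single} quadrilateral face of $\ol{G}$, since that face only sees two of the edges at a vertex $v$ of $G$ and two of the dual edges at the face of $G$ between them. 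What actually does the work is the complementarity you mention only in passing: each face of $\ol{G}$ is a quadrilateral whose four sides are halves of $e_1$, $e_1^+$, $e_2^+$, $e_2$, where $e_1,e_2\in E(G)$ share a vertex and lie on the boundary of a common face of $G$, and since $\phi^+_\omega(e_i^+)=1-\phi_\omega(e_i)$ for each $i$, exactly one side in each pair $\{$half of $e_i$, half of $e_i^+\}$ satisfies $\ol{\phi}_\omega=0$. Hence every vertex of $\ol{G}^+$ has $\eta_\omega$-degree exactly $2$: there is no parity bookkeeping, no need for a separate case check to eliminate degree $4$, and the finite-path case is excluded immediately because an endpoint of a path would have degree $1$. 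With degree exactly two at every vertex, each maximal connected set of present edges of $\eta_\omega$ is a self-avoiding cycle or a doubly infinite self-avoiding path, as claimed.
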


\begin{proof}See Lemma 3.1 of \cite{ZL20}.
\end{proof}

\begin{lemma}\label{l42}Let $\omega\in \{0,1\}^{V(G)}$. Assume that $I$ is a doubly infinite self-avoiding path consisting of edges of $E(\ol{G}^+)$, which is also an infinite contour in $\eta_{\omega}$. Then $I$ splits the hyperbolic plane $\HH^2$ into two unbounded components. Exactly one component (denoted by $P_I$) contains an infinite contour in $\phi_{\omega}$, and the other component (denoted by $D_I$) contains an infinite contour in $\phi^+_{\omega}$. 
\begin{enumerate}
\item Let $V_I\subset V(G)$ consist of all the vertices on all the faces of $G$ crossed by I. Then all the vertices in $V_I\cap P_I$ are in the same infinite cluster of $\omega$.
\item Let $V_I^+\subset V(G^+)$ consist of all the vertices on all the faces of $G^+$ crossed by I. Then all the vertices in $V_I^+\cap D_I$ are in the same infinite contour of $\phi^+_{\omega}$.
\item If the total number of infinite 0-clusters and infinite 1-clusters in $\omega$ is 1. Denote the unique infinite cluster in $\omega$ by $\xi$, then $\xi\subset P_I$.
\item If there exists a unique infinite contour $C$ in $\phi^+_{\omega}$, then $C\subset D_I$
\end{enumerate}
\end{lemma}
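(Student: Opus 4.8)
\emph{Separation.} I would first show that $I$ splits $\HH^{2}$ into two unbounded components. Work with the superposition $\ol{G}$ of $G$ and $G^{+}$ and its dual $\ol{G}^{+}$, both properly embedded in $\HH^{2}$. By Lemma \ref{l41}, $I$ is self-avoiding, so its image is a properly embedded copy of $\RR$ in $\HH^{2}$, and properness forces each of its two rays to leave every compact set. In the one-point compactification $\bS^{2}=\HH^{2}\cup\{\infty\}$ the set $I\cup\{\infty\}$ is then a Jordan curve (a continuous injective image of $\bS^{1}$), so by the Jordan curve theorem $\HH^{2}\setminus I$ has exactly two components, and each is unbounded because its closure in $\bS^{2}$ contains $\infty$.

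\emph{The two sides.} Next I would set up the local picture. The faces of $\ol{G}$ are quadrilateral ``flags'' whose corners are a vertex $u$ of $G$, a vertex $p$ of $G^{+}$, and the midpoints of the two edges of $G$ joining $u$ to the face $p$; hence $\ol{G}^{+}$ is $4$-regular, and since at each edge-midpoint the two primal half-edges are present in $\ol{\phi}_{\omega}$ precisely when the two dual half-edges are absent, every flag has exactly two present and two absent edges, so $\eta_{\omega}$ has degree $2$ at each vertex of $\ol{G}^{+}$ and $I$ runs through a doubly-infinite sequence of flags $Q_{i}=(u_{i},p_{i})$, meeting each in a single chord. Checking the four possible patterns of $\phi_{\omega}$ on the two $G$-edges of a flag shows that this chord always separates the corner $u_{i}$ from the corner $p_{i}$. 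Moreover, across a transition $Q_{i}\to Q_{i+1}$ the shared (absent) edge of $\ol{G}$ is either a primal half-edge, forcing $u_{i}=u_{i+1}$, or a dual half-edge, forcing $p_{i}=p_{i+1}$; in both cases the component of $\HH^{2}\setminus I$ holding the $G$-corner does not change. So I may define $P_{I}$ to be the component containing every $u_{i}$ and $D_{I}$ the one containing every $p_{i}$.

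\emph{Escorted infinite contours; parts (3), (4).} If $Q_{i}\to Q_{i+1}$ crosses a primal half-edge then $u_{i}=u_{i+1}$; if it crosses a dual half-edge then that edge is $(u_{i},u_{i+1})\in E(G)$ with $\phi_{\omega}((u_{i},u_{i+1}))=1$. Either way $u_{i}$ and $u_{i+1}$ lie in the same $\omega$-cluster, so \emph{all} $u_{i}$ lie in a single $\omega$-cluster $C$; since $I$ is self-avoiding and each vertex of $G$ lies on only finitely many flags, the $u_{i}$ take infinitely many values and $C$ is infinite. The lift of $C$ to $\ol{G}$ is connected, contains $u_{0}\in P_{I}$, and is crossed by no edge of $I$ (which crosses only absent edges of $\ol{G}$), so it lies in $P_{I}$; thus $P_{I}$ carries the infinite $\phi_{\omega}$-contour $C$, and the symmetric argument with the $p_{i}$ gives an infinite $\phi^{+}_{\omega}$-contour $\Xi$ inside $D_{I}$. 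Parts (3) and (4) now follow at once: a unique infinite $\omega$-cluster must be $C\subset P_{I}$, and a unique infinite $\phi^{+}_{\omega}$-contour must be $\Xi\subset D_{I}$.

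\emph{Parts (1), (2), and the main obstacle.} For (1), since all $u_{i}$ lie in $C$ it is enough to show that every vertex $v$ of a face $F=p_{i}$ crossed by $I$ with $v\in P_{I}$ occurs as some $u_{j}$. The part of $I$ lying in the closed disk $\bar{F}$ is a finite disjoint union of chords (one for each maximal run of flags of $I$ with $G^{+}$-corner $F$), and each such chord separates its run's $G$-vertices from the centre of $F$; applying the separation of the first step inside $\bar{F}$, any vertex of $\partial F$ that is not the $G$-corner of a flag of $I$ lies on the same side of $I$ as the centre, that is in $D_{I}$. Hence $v$ is such a $G$-corner, so $v=u_{j}\in C$; part (2) is the same argument with the $G^{+}$-corners, the faces of $G^{+}$, and $\Xi$. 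The one remaining point, and the step I expect to be the main obstacle, is the word ``exactly'': I must rule out an infinite $\phi_{\omega}$-contour $C'$ inside $D_{I}$. The structural fact I would exploit is that the only present $\ol{\phi}_{\omega}$-edges adjacent to $I$ from the $D_{I}$ side are the $G^{+}$-half-edges making up the lift of $\Xi$ (a present primal half-edge along $I$ is incident to some $u_{i}\in P_{I}$), so $C'$ is ``shielded'' from $I$ by $\Xi$. Choosing a doubly-infinite simple sub-path $\gamma$ of $\Xi$ (which exists since $\Xi$ is an infinite connected subgraph of $G^{+}$ with all vertices of even $\phi^{+}_{\omega}$-degree) and applying the first step to $\gamma$, one side of $\gamma$ contains all of $P_{I}$ and the other, $E\subset D_{I}$, must contain the lift of $C'$; then a crossing-parity count using that each edge of $\gamma$ is dual to a different-state edge of $G$ should show that $E$ cannot carry an infinite $\omega$-cluster, a contradiction. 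Turning this last topological step into a proof, along with the bookkeeping for faces that $I$ enters several times in (1)--(2), is where the real work lies.
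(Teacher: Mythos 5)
Your construction is sound, and everything in it that the paper actually needs is proved correctly: the Jordan-curve separation, the flag/chord analysis showing each chord separates the $G$-corner from the $G^{+}$-corner, the fact that consecutive corners stay on a fixed side and lie in a common $\omega$-cluster (resp.\ common $\phi^{+}_{\omega}$-contour), the infinitude of these escort contours, and the derivation of parts (1)--(4). This is essentially the verification the paper leaves implicit (its proof is the single sentence that the lemma is ``straightforward to check from the construction of $\eta_{\omega}$''), and your bookkeeping for faces visited several times is only a routine extension of what you already wrote: each run's chord enters and exits through primal half-edges adjacent to its extreme corners, so the piece it cuts off contains no vertices of the face other than that run's $G$-corners, and the cut-off pieces of distinct runs cannot nest because their corner sets are disjoint.

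The one place you go astray is the step you yourself single out as the main obstacle: the word ``exactly'' cannot be proved in the literal sense you are aiming at, because it is false for general $\omega$. If $\omega$ has several infinite clusters and several infinite interfaces (for instance, two disjoint bi-infinite paths of $1$'s in a sea of $0$'s), then for an interface $I$ running along one side of one path, the far component $D_{I}$ contains entire other infinite monochromatic clusters, hence infinite $\phi_{\omega}$-contours; indeed the proof of Lemma \ref{l44} works precisely with such configurations, where for disjoint interfaces $I,J$ one has $P_{J}\subset D_{I}$, so $D_{I}$ carries the infinite cluster containing $V_{J}\cap P_{J}$. Consequently your proposed completion cannot work: a doubly infinite simple sub-path $\gamma$ of $\Xi$ does separate the plane, but the fact that every edge of $\gamma$ is dual to a different-state primal edge says nothing about the configuration beyond $\gamma$, and an infinite $\omega$-cluster on the far side of $\gamma$ is perfectly possible. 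The statement should instead be read as a definition of the labels, exactly as your argument sets them up: $P_{I}$ is the side containing the $G$-corners along $I$ (and hence an infinite $\phi_{\omega}$-contour built from them), $D_{I}$ the side containing the $G^{+}$-corners (and hence an infinite $\phi^{+}_{\omega}$-contour); these two sides are distinct by your per-flag separation, and this labeling is all that parts (1)--(4) and the later applications in Lemmas \ref{l44} and \ref{l45} use. With that reading, your proof is already complete; the attempted exactness claim should simply be dropped.
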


\begin{proof}It is straightforward to check the lemma from the construction of $\eta_{\omega}$.
\end{proof}

\begin{lemma}\label{l43}Let $G$ be a graph satisfying the condition of Theorem \ref{m1}. Let $\omega\in \{0,1\}^{V(G)}$ be an invariant percolation on $G$ with distribution $\mu$.  Then a.s. $t^+\neq 1$. 
\end{lemma}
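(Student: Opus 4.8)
The plan is to argue by contradiction and symmetry. Suppose $\mu(t^+ = 1) > 0$; by ergodicity we may assume $t^+ = 1$ a.s. By Lemma \ref{l41}, this unique infinite contour $I$ in $\eta_\omega$ is a doubly infinite self-avoiding path in $\ol G^+$ (it cannot be a cycle, which is finite). By Lemma \ref{l42}, $I$ splits $\HH^2$ into two unbounded pieces $P_I$ and $D_I$, with all infinite contours of $\phi_\omega$ lying in $P_I$ and all infinite contours of $\phi^+_\omega$ lying in $D_I$. In particular $k \geq 1$ and $k^+ \geq 1$, so $t = k + k^+ \geq 2$; I would also note that $t \geq 2$ forces the "two-sided'' picture to be genuinely asymmetric between $G$ and $G^+$. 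The key observation is that the roles of $G$ and $G^+$ (equivalently of $P_I$ and $D_I$) are interchangeable in an automorphism-invariant, insertion-and-deletion-tolerant setting only up to the asymmetry recorded by $t^+$; having $t^+ = 1$ pins down a canonical one-sided structure that should be incompatible with invariance.

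The main steps I would carry out: \textbf{(1)} Establish that on $\{t^+ = 1\}$ we have $k \geq 1$ and $k^+ \geq 1$ (done via Lemma \ref{l42}), hence by Lemma \ref{l27} applied to $\psi = \phi_\omega$ (whose dual contour count is $k^+$) the pair $(k,k^+)$ lies in $\{(1,\infty),(\infty,1),(\infty,\infty)\}$ after excluding $(1,0),(0,1)$. \textbf{(2)} Rule out $(k,k^+) = (1,1)$-type degenerate cases and then analyze the remaining ones. The heart of the argument is a Burton--Keane / mass-transport style contradiction: if $t^+ = 1$, then $I$ is the unique bi-infinite interface, and every vertex of $G$ "knows" which side of $I$ it is on; one can transport mass from each vertex of $\ol G^+$ toward $I$, or use the fact that a unique bi-infinite path in a one-ended nonamenable transitive planar graph cannot be automorphism-invariant (a single bi-infinite geodesic-like object would have to be fixed setwise by a transitive group, contradicting nonamenability — compare the standard fact that invariant percolation on a nonamenable graph has no unique bi-infinite path). \textbf{(3)} Conclude $\mu(t^+ = 1) = 0$.

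The cleanest route for step (2) is probably the indistinguishability machinery: regard $(\omega, \eta_\omega)$ as a percolation with scenery on $\ol G^+$ (the scenery records $\omega$ on $V$), which is $\Gamma$-invariant and insertion-tolerant, so by Proposition \ref{idc} its infinite clusters — here the single infinite contour $I$ — are indistinguishable; but a unique infinite cluster makes every invariant property trivially satisfied, so indistinguishability gives no contradiction directly. Instead I expect the real obstruction to come from combining Lemma \ref{l42}(3)--(4) with Lemma \ref{n1f}: if $t^+ = 1$ then on the $P_I$ side the infinite clusters of $\omega$ and on the $D_I$ side the infinite contours of $\phi^+_\omega$ are cleanly separated by a single line, which should force (via deletion/insertion tolerance) either a unique infinite $1$-cluster coexisting with a unique infinite $0$-cluster — forbidden by Lemma \ref{n1f} — or a configuration with $s_0 = s_1 = \infty$ whose interface structure cannot have exactly one bi-infinite component.

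\textbf{The hard part} will be step (2): turning "a single bi-infinite interface $I$" into an actual contradiction with invariance and the tolerance hypotheses. The difficulty is that $t^+ = 1$ does not immediately collapse the counts $(s_0, s_1, k^+)$ to a forbidden case — one must show that the *existence of a distinguished bi-infinite line $I$* is itself incompatible with $\mu$ being $\Aut(G)$-invariant on a one-ended nonamenable transitive planar graph. I anticipate this requires either a mass-transport argument (the law of $I$ would give an invariant "direction field" along a single line, and counting ends/using nonamenability of $\ol G^+$ yields a contradiction) or reducing to Lemma \ref{l27}/Lemma \ref{n1f} after showing $I$ unique $\Rightarrow$ each of $\phi_\omega$ and $\phi^+_\omega$ has a *unique* infinite contour, i.e. $(k, k^+) = (1,1)$, which then contradicts Lemma \ref{l27} (the pair $(1,1)$ is not in its list). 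Establishing that "$t^+ = 1 \Rightarrow k = k^+ = 1$" is the crux, and I would prove it by observing that any two infinite contours on the same side of the unique $I$ would have to be separated by a further bi-infinite interface component, contradicting uniqueness of $I$.
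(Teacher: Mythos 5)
Your plan does not close the argument: everything hinges on your step (2), and the route you finally commit to rests on an unproven claim. You propose to show that $t^+=1$ forces $(k,k^+)=(1,1)$ and then to contradict Lemma \ref{l27}, justifying the implication by asserting that ``any two infinite contours on the same side of the unique $I$ would have to be separated by a further bi-infinite interface component.'' That assertion is exactly the delicate point, and it is not a near-triviality: an infinite contour can perfectly well be bounded entirely by \emph{finite} interface cycles (take $\omega\equiv 1$, where the unique infinite contour of $\phi_\omega$ meets only the finite $\eta_\omega$-cycles around the faces of $G$; or a proper $2$-colouring, where the single infinite contour of $\phi^+_\omega$ meets only the finite cycles around vertices of $G$). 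So ``infinite contour $\Rightarrow$ adjacent infinite interface'' is false in general, and ruling out, say, $(k,k^+)=(\infty,1)$ in the presence of a single infinite interface would require a genuine topological argument that you do not supply. Your alternative suggestion in step (2) — that a unique bi-infinite path ``would have to be fixed setwise by a transitive group, contradicting nonamenability'' — is also not a valid argument: the interface is random, and invariance of its law does not make the path deterministically invariant. Finally, note that Lemma \ref{l43} is stated for an arbitrary invariant percolation, so the insertion/deletion tolerance you lean on in parts of the plan is not available here.

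The paper's proof is much shorter and is essentially the ``standard fact'' you mention in passing but never develop. If $t^+=1$ a.s.\ (after reducing to $\mu$ ergodic), the unique infinite contour of $\eta_\omega$ is an invariant bond percolation on $\overline{G}^+$ with a single component a.s.; the graph $\overline{G}^+$ is nonamenable, quasi-transitive and unimodular, so Lemma \ref{l06} (Benjamini--Schramm) forces $p_c(\omega)<1$ for that component, whereas by Lemma \ref{l41} the component is a doubly infinite self-avoiding path, which has $p_c=1$ — a contradiction. No analysis of $(k,k^+)$, no tolerance, and no indistinguishability machinery is needed; the key ingredients you were missing are (i) viewing the interface as an invariant percolation on the auxiliary graph $\overline{G}^+$ rather than on $G$, and (ii) invoking Lemma \ref{l06} to make ``no invariant unique bi-infinite path'' rigorous.
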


\begin{proof}Without loss of generality, assume that $\mu$ is ergodic. If $t^+=1$ a.s., then the unique infinite contour in $\eta_{\omega}$ forms an invariant bond percolation on $\overline{G}^+$ which has a single component a.s. It is straightforward to check that $\overline{G}^+$ is non-amenable, quasi-transitive and unimodular (quasi-transitive planar graphs are unimodular, see \cite{LP}). This contradicts Lemma \ref{l06} since the unique infinite contour in $\eta_{\omega}$ is a doubly infinite self-avoiding path by Lemma \ref{l41}, which has critical percolation probability 1. 
\end{proof}

\begin{lemma}\label{l44}Let $G$ be a graph satisfying the condition of Theorem \ref{m1}. Let $\omega\in \{0,1\}^{V(G)}$ be an invariant percolation on $G$ with distribution $\mu$. If $\mu$ is both insertion-tolerant and deletion-tolerant, and $\mu$-a.s.~$s_0+s_1=\infty$, then $\mu$-a.s.~$k^+\neq 1$.
\end{lemma}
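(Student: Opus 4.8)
The plan is to argue by contradiction: suppose $\mu$ is ergodic (without loss of generality) and $\mu$-a.s.\ $s_0+s_1=\infty$ but $k^+=1$. The strategy is to exploit the combinatorial constraint between infinite contours in $\phi_\omega$ and infinite contours in $\phi^+_\omega$ encoded by the interface configuration $\eta_\omega$ on $\ol G^+$, together with Lemma~\ref{l43} which forbids $t^+=1$. First I would recall that $t=k+k^+=(s_0+s_1)+k^+=\infty$ by hypothesis, so $\ol\phi_\omega$ has infinitely many infinite contours; and that $t^+$, the number of infinite contours in $\eta_\omega$, is at least $1$ (a standard consequence of the fact that $\ol\phi_\omega$ fails to percolate fully, so its complementary interface must carry an infinite component — this can be made precise via Lemma~\ref{l27} applied on $\ol G$, whose dual is $\ol G^+$: since $t=\infty$ we have $(t,t^+)\in\{(\infty,1),(\infty,\infty)\}$, hence $t^+\in\{1,\infty\}$, and by Lemma~\ref{l43} necessarily $t^+=\infty$).

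Next I would use Lemma~\ref{l42}: each doubly infinite self-avoiding path $I$ that is an infinite contour in $\eta_\omega$ splits $\HH^2$ into two unbounded halves $P_I$ (containing an infinite contour in $\phi_\omega$) and $D_I$ (containing an infinite contour in $\phi^+_\omega$), and by part (4) of that lemma, if $k^+=1$ then the unique infinite contour $C$ of $\phi^+_\omega$ satisfies $C\subset D_I$ for \emph{every} such $I$. The key geometric point is that this forces all the infinitely many interface paths $I_1,I_2,\dots$ to be ``nested'' around $C$ in a consistent way: each $I_j$ has $C$ on its $D$-side, so the $P$-sides $P_{I_j}$ are pairwise either nested or disjoint but all avoid $C$. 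Combined with $t^+=\infty$, one gets an infinite family of infinite contours of $\eta_\omega$ all ``surrounding'' the fixed end-structure carrying $C$; I would then derive a contradiction with invariance and ergodicity — essentially, such a configuration would single out a distinguished infinite cluster or distinguished direction, contradicting either the indistinguishability of infinite clusters (Proposition~\ref{idc}, after enriching $\omega$ to a percolation with scenery recording the interface data) or a mass-transport/counting argument showing the number of infinite $\phi_\omega$-contours ``adjacent'' to $C$ through the $I_j$'s would have to be simultaneously one (from $C$'s side) and infinite (from the interface side).

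More concretely, here is the cleanest route I would try. Since $k^+=1$, all of $s_0,s_1$ and hence all infinitely many infinite clusters of $\omega$ are, by Lemma~\ref{l42}(1) applied to each $I$, constrained to lie in the $P$-halves; but running Lemma~\ref{l42} in the other direction, the \emph{unique} contour $C$ of $\phi^+_\omega$ is incident (in the sense defined just before Lemma~\ref{l41}) to infinitely many distinct infinite clusters of $\omega$ — one reads off at least one incident infinite cluster from each interface path, and ergodicity plus $s_0+s_1=\infty$ prevents these from collapsing to finitely many. Now apply deletion-tolerance: I would delete a single vertex $v$ adjacent to an edge dual to an edge of $C$, changing $\omega$ on one vertex; this is a tolerated modification, it changes $\phi^+_\omega$ only on the finitely many dual edges incident to $v$, and one checks it cannot destroy the uniqueness of the infinite $\phi^+$-contour while it \emph{can} be used (iterating, or via an insertion/deletion argument on scenery as in Proposition~\ref{idc}) to show the event ``$C$ is the unique infinite $\phi^+$-contour and it is incident to infinitely many infinite $\omega$-clusters'' is incompatible with cluster indistinguishability. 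That contradiction yields $k^+\neq1$.

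The main obstacle I anticipate is the bookkeeping in the middle step: making rigorous the claim that ``$k^+=1$ together with $t^+=\infty$ forces the unique $\phi^+$-contour to be incident to infinitely many distinct infinite $\omega$-clusters,'' since a priori many interface paths could be incident to the \emph{same} infinite cluster. Handling this cleanly likely requires the mass-transport principle on $\ol G^+$ (legitimate since quasi-transitive planar graphs are unimodular, as noted in the proof of Lemma~\ref{l43}) to transport mass from interface paths to incident clusters, showing that if only finitely many clusters were hit then some cluster would receive infinite mass, contradicting that each infinite cluster has finitely many ``sides'' facing a given contour; alternatively one invokes indistinguishability directly. The rest — the topological facts about $\HH^2$, the $\{0,1\}$-valued constraint that each $G^+$-vertex has even degree in $\phi^+_\omega$, and the reduction to ergodic $\mu$ — is routine given Lemmas~\ref{l41}–\ref{l43}.
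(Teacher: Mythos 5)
Your opening reduction is exactly the paper's: assume $k^+=1$, note $t=s_0+s_1+k^+=\infty$, apply Lemma \ref{l27} (on $\ol G$) and Lemma \ref{l43} to conclude $t^+=\infty$, and use Lemma \ref{l42}(4) to place the unique infinite contour $C$ of $\phi^+_{\omega}$ inside $\bigcap_{I}D_I$. You also correctly identify the intermediate target, namely infinitely many distinct infinite $\omega$-clusters sitting on the $P$-sides of the interface contours, but you leave the possibility of nested $P$-sides unresolved and propose mass transport or indistinguishability to patch it. The paper handles this directly and more simply: if $I\subset P_J$ for disjoint interface contours $I,J$, then Lemma \ref{l42}(2) produces two infinite contours of $\phi^+_{\omega}$, one in $D_I$ and one in $D_J$, separated by the infinite cluster containing $V_J\cap P_J$, contradicting $k^+=1$; hence the $P_I$ are pairwise disjoint and the sets $V_I\cap P_I$ lie in infinitely many distinct infinite clusters, with no transport argument needed.

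The genuine gap is your final contradiction. The ``concrete route'' you give --- delete one vertex adjacent to $C$ and appeal to indistinguishability --- does not close: the statement ``$C$ is the unique infinite $\phi^+$-contour and is incident to infinitely many infinite $\omega$-clusters'' is not of the form ``some infinite clusters have an invariant property and others do not,'' so Proposition \ref{idc} gives no contradiction (at most it says all infinite clusters are incident to $C$ or none are, neither of which is absurd), and a single-vertex deletion alters $\phi^+_{\omega}$ only in a bounded region with no mechanism specified for why this conflicts with $k^+=1$ a.s. The missing idea, which is the heart of the paper's proof, is a finite surgery that splits $C$: among the infinitely many distinct infinite clusters in the disjoint regions $P_I$, two of them (say in $P_{I_3}$ and $P_{I_4}$) carry the same spin; choose $x\in P_{I_3}$, $y\in P_{I_4}$ with that spin, join them by a finite path in $G$, and set every vertex of the path to that spin. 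Since $C\subset D_{I_3}\cap D_{I_4}$, the modified configuration has the former contour $C$ cut into at least two infinite contours of $\phi^+$, and insertion-tolerance (state $1$ case) or deletion-tolerance (state $0$ case) gives this modified event positive probability, contradicting $k^+=1$ a.s. Without this step your proposal does not yield a contradiction.
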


\begin{proof}

 Assume that $k^+=1$ a.s., we shall obtain a contradiction.  In this case 
 \begin{eqnarray*}
 t=s_0+s_1+k^{+}=\infty,\ a.s. 
\end{eqnarray*} 
 By Lemma \ref{l27}, a.s.~$t^+\in\{1,\infty\}$. By Lemma \ref{l43}, a.s.~$t^+\in\{\infty\}$. Let $C$ be the unique infinite contour in $\phi^+_{\omega}$. 
Let $\mathcal{I}$ be the collection of all the infinite contours in $\eta_{\omega}$. By Lemma \ref{l42} (4), we have
\begin{eqnarray*}
C\subset \cap_{I\in \mathcal{I}}D_I.
\end{eqnarray*} 

Then we claim that for any $I,J\in \mathcal{I}$ such that $I$ and $J$ are disjoint, we have $I\subset D_J$. Indeed, if there exist two disjoint infinite contours $I,J\in\mathcal{I}$ satisfying $I\subset P_{J}$, since both $I$ and $J$ are doubly infinite self-avoiding paths, then by Lemma \ref{l42} (2) all the vertices in $V_{I}^+\cap D_I$ is in the same infinite contour $C_1$ of $\phi^+_{\omega}$. Similarly, all the vertices in $V_J^{+}\cap D_J$ are also in the same infinite contour of $\phi^+_{\omega}$ denoted by $C_2$.  Moreover, $C_1$ and $C_2$ must be two distinct infinite contours in $\phi^+_{\omega}$ because they are separated by the infinite cluster in $\omega$ including $V_J\cap P_J$. See Figure \ref{fig:curve}. But this contradicts the assumption that $k^+=1$ a.s.

\begin{figure}
\includegraphics[width=.5\textwidth]{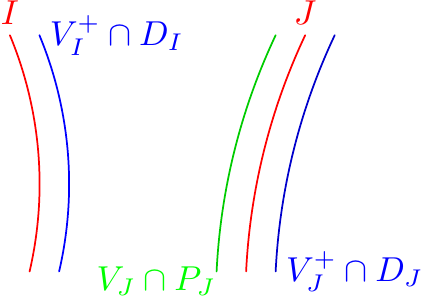}
\caption{Red lines represent two infinite contours $I$ and $J$ in $\eta_{\omega}$. Blue lines represent infinite contours in $\phi^+_{\omega}$. Green lines represent infinite contours in $\phi_{\omega}$.}\label{fig:curve}
\end{figure}

Therefore for any two disjoint $I,J\in \mathcal{I}$, $P_{I}\cap P_{J}=\emptyset$, and $V_I\cap P_I$ and $V_J\cap P_J$ are in two distinct infinite clusters of $\omega$. Hence $\{V_I\cap P_I\}_{I\in \mathcal{I}}$ are in infinitely many infinite clusters in $\omega$.

Note that one of the following two cases must occur.
\begin{enumerate}[label=(\alph*)]
\item There exist at least two distinct infinite contours $I_3, I_4\in \mathcal{I}$, such that each one of $V_{I_3}\cap P_{I_3}$ and $V_{I_4}\cap P_{I_4}$ is contained in an infinite 1-cluster of $\omega$; or
\item There exist at least two distinct infinite contours $I_3, I_4\in \mathcal{I}$, such that each one $V_{I_3}\cap P_{I_3}$ and $V_{I_4}\cap P_{I_4}$ is contained in an infinite 0-cluster of $\omega$; 
\end{enumerate}
 
 We shall prove the conclusion of the lemma when (a) occurs; the conclusion of the lemma under (b) can be proved using similar arguments.

Assume that (a) occurs. Then we can find two distinct infinite contours $I_3$ and $I_4$ in $\eta_{\omega}$ satisfying
\begin{itemize}
\item there exists two vertices $x,y\in  V(G)$, such that there are two edges $(x,c),(y,d)\in \overline{G}$, satisfying $(x,c)^+\in I_3$, $(y,d)^+\in I_4$; and
\item $x\in P_{I_3}$, $y\in P_{I_4}$ and $\omega(x_3)=\omega(x_4)=1$;
\item $C\subset D_{I_3}\cap D_{I_4}$ by Lemma \ref{l42} (4).
\end{itemize} 
Let $l_{xy}$ be a path joining $x$ and $y$ and consisting of edges of $G$. Define a new configuration $\bar{\omega}\in \{0,1\}^{V(G)}$ by 
\begin{eqnarray*}
\bar{\omega}(z)=\begin{cases}1,&\mathrm{if}\ z\in l_{uv};\\ \omega(z)&\mathrm{otherwise.}\end{cases},\ z\in V(G).
\end{eqnarray*}

\begin{figure}
\includegraphics[width=.45\textwidth]{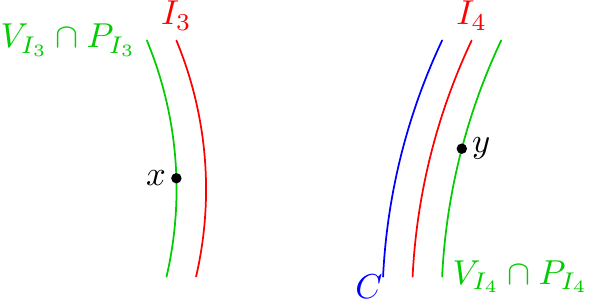}\qquad\qquad\includegraphics[width=.3\textwidth]{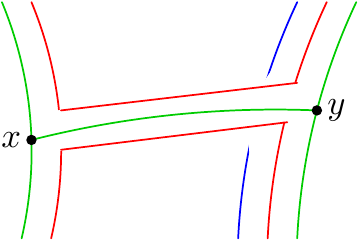}
\caption{In the left (resp.\ right) figure, Red lines represent infinite contours in $\eta_{\omega}$ (resp.\ $\eta_{\ol{\omega}}$). Blue lines represent infinite contours in $\phi^+_{\omega}$ (resp.\ $\phi^+_{\ol{\omega}}$). Green lines represent infinite contours in $\phi_{\omega}$ (resp.\ $\phi_{\ol{\omega}}$).}\label{fig:contoursplit}
\end{figure}

 As in Lemma \ref{l42}(2), the infinite contour  $C$ including $V_{I_3}^+\cap D_{I_3}$ splits into two infinite contours in $\phi^+_{\bar{\omega}}$ by the path $l_{x,y}$.  See Figure \ref{fig:contoursplit}. Therefore in $\phi^+_{\bar{\omega}}$ there exists at least 2 infinite contours. By insertion-tolerance, there exist at least 2 infinite contours in $k^+$ with positive probability, but this is a contradiction to the assumption that $k^+=1$ a.s.
The contradiction implies the lemma.
\end{proof}

\begin{lemma}\label{l45}Let $G$ be a graph satisfying the condition of Theorem \ref{m1}. Let $\omega\in \{0,1\}^{V(G)}$ be an invariant percolation on $G$ with distribution $\mu$. 
\begin{enumerate}
\item If $\mu$ is both deletion-tolerant, and $\mu$-a.s.~$(s_0,s_1)=(0,1)$, then $\mu$-a.s.~$k^+=0$.
\item If $\mu$ is both insertion-tolerant, and $\mu$-a.s.~$(s_0,s_1)=(1,0)$, then $\mu$-a.s.~$k^+=0$.
\end{enumerate}
\end{lemma}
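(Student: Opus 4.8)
The plan is to prove part (1); part (2) then follows by exchanging the roles of $0$ and $1$, of insertion- and deletion-tolerance, and of $G$ and $G^+$. Assume for contradiction that $\mu$ is deletion-tolerant, $\mu$-a.s.\ $(s_0,s_1)=(0,1)$, and $\mu(k^+\geq 1)>0$; after conditioning we may assume $\mu$ is ergodic, so $k^+$ is a.s.\ constant, hence $k^+\geq 1$ a.s. By Lemma~\ref{p25} applied to $\phi^+_\omega$ (or directly by the standard Burton--Keane / nonamenable-planar trichotomy as packaged in the earlier lemmas), $k^+\in\{1,\infty\}$ a.s. The two cases will be handled separately.

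\medskip

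Consider first the case $k^+=1$ a.s. Then $t=s_0+s_1+k^+=0+1+1=2$, so by Lemma~\ref{l27} (applied to $\ol\phi_\omega$ on $\ol G$) we have $t^+\in\{1,\infty\}$, and by Lemma~\ref{l43} in fact $t^+=\infty$ a.s. Let $\xi$ be the unique infinite cluster of $\omega$ and $C$ the unique infinite contour of $\phi^+_\omega$. For any infinite contour $I\in\mathcal{I}$ (the collection of infinite contours of $\eta_\omega$), Lemma~\ref{l41} says $I$ is a doubly infinite self-avoiding path, and Lemma~\ref{l42}(3)--(4) forces $\xi\subset P_I$ and $C\subset D_I$. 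Exactly as in the proof of Lemma~\ref{l44}, for any two disjoint $I,J\in\mathcal{I}$ one cannot have $I\subset P_J$: if so, Lemma~\ref{l42}(2) would produce two distinct infinite contours of $\phi^+_\omega$ separated by the infinite cluster containing $V_J\cap P_J$, contradicting $k^+=1$. Hence $P_I\cap P_J=\emptyset$ for disjoint $I,J$, so the sets $\{V_I\cap P_I\}_{I\in\mathcal{I}}$ lie in infinitely many distinct infinite clusters of $\omega$ --- contradicting $s_0+s_1=1$. This rules out $k^+=1$.

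\medskip

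It remains to rule out $k^+=\infty$, which is where the real work lies and which I expect to be the main obstacle. The idea is to use deletion-tolerance to destroy the single infinite $1$-cluster locally and thereby merge infinitely many infinite $\phi^+_\omega$-contours into finite ones, contradicting $k^+=\infty$; more precisely, I would argue as follows. Since $s_1=1$ and $k^+=\infty$, pick a vertex $v$ whose removal from $\xi$ (i.e.\ setting $\omega(v)=0$) has positive-probability effect on the contour structure: each edge $e^+$ of $G^+$ dual to an edge $e$ incident to $v$ flips its $\phi^+$-state when $v$ flips. The key structural input is that near $v$ the faces of $G$ meeting at $v$, together with $\eta_\omega$, organize the local contours of $\phi^+_\omega$; flipping $v$ from $1$ to $0$ in a configuration where all neighbors of $v$ are in $\xi$ (a positive-probability event, using that $\xi$ is the unique infinite $1$-cluster and insertion/deletion tolerance to arrange the local picture) changes the parity at the dual faces around $v$ and can only \emph{merge} the (at most $d$) contour-strands of $\phi^+_\omega$ passing near $v$ --- it cannot create a new infinite contour, and it identifies strands that were in distinct infinite contours. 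Iterating this along a path connecting two chosen infinite contours of $\phi^+_\omega$ (possible since, $s_0=0$, the single infinite $1$-cluster $\xi$ cannot separate the plane into regions each containing an infinite $\phi^+$-contour without itself being disconnected at infinity --- here I would invoke Lemma~\ref{n1f}(2): a unique infinite $1$-cluster together with deletion-tolerance forces no infinite $0$-clusters, consistent with $s_0=0$, and the planar duality packaged in Lemma~\ref{l27}) produces, with positive probability, a configuration with strictly fewer infinite $\phi^+$-contours. Since $k^+$ is a.s.\ constant and deletion-tolerance only changes measure-positive events to measure-positive events, this contradicts $k^+=\infty$ a.s. The delicate point, and the step I would write out most carefully, is verifying that a single deletion genuinely merges two previously distinct infinite contours of $\phi^+_\omega$ rather than merely shortening one contour --- this requires tracking how the interface $\eta_\omega$ and the faces around $v$ reconnect, essentially a local planar-duality computation analogous to the contour-splitting argument in Lemma~\ref{l44} run in reverse. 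With both cases $k^+=1$ and $k^+=\infty$ excluded, $\mu$-a.s.\ $k^+=0$, proving part (1); part (2) is the dual statement and follows by the symmetry noted at the outset.
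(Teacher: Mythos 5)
Your overall architecture (rule out $k^+=1$, then rule out $k^+=\infty$) starts from a weaker input than needed: Lemma \ref{l27} applied to $\phi_\omega$ with $k=s_0+s_1=1$ already forbids $(k,k^+)=(1,1)$, so the only case to exclude is $k^+=\infty$; moreover your treatment of the $k^+=1$ case misreads Lemma \ref{l27} (it asserts $(t,t^+)$ lies in a specific list, so $t=2$ is itself impossible a.s.\ --- it does not say ``$t=2$ implies $t^+\in\{1,\infty\}$''). That is harmless. The genuine gap is in the main case $k^+=\infty$: your plan is to use deletion tolerance to \emph{merge} infinite contours of $\phi^+_\omega$ and conclude a contradiction because the configuration has ``strictly fewer'' infinite contours. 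But merging two infinite contours among infinitely many still leaves infinitely many, so the event you produce is not $\{k^+<\infty\}$ and there is no contradiction with $k^+=\infty$ a.s.; no finite local modification can turn an infinite count into a finite one, so this route cannot work as stated. In addition, part (1) only grants deletion tolerance, so your step that ``arranges'' all neighbours of $v$ to be in state $1$ via insertion tolerance is not available under the hypotheses.

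The paper's argument instead derives the contradiction from the \emph{uniqueness of the infinite $1$-cluster}, which is exactly the kind of event a local modification can violate. Assuming $k^+=\infty$, one gets $t=\infty$, hence $t^+=\infty$ by Lemmas \ref{l27} and \ref{l43}; then one picks two distinct interface contours $I_1,I_2$ in $\eta_\omega$ touching the unique infinite $1$-cluster $\xi_1$ at vertices $u,v$, shows $D_{I_1}\cap D_{I_2}=\emptyset$ (otherwise Lemma \ref{l42} would force two distinct infinite $1$-clusters), and then sets $\omega\equiv 0$ along a path $l_{uv}$ in $G$ from $u$ to $v$. This deletion, together with the two disjoint regions $D_{I_1},D_{I_2}$, splits $\xi_1$ into at least two infinite $1$-clusters; deletion tolerance then gives $s_1\geq 2$ with positive probability, contradicting $(s_0,s_1)=(0,1)$ a.s. If you want to salvage your write-up, replace the ``merge contours to decrease $k^+$'' step by this ``split the unique $1$-cluster'' step (and drop the reliance on insertion tolerance in part (1)).
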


\begin{proof}We only prove Part (1) here, Part (2) can be proved using similar arguments.

Assume $(s_0,s_1)=(0,1)$ a.s. Then $k=s_0+s_1=1$ a.s. By Lemma \ref{l27}, either $k^+=0$ a.s., or $k^+=\infty$ a.s. 

Assume that $k^+=\infty$ a.s., we shall obtain a contradiction. In this case $t=k+k^+=\infty$ a.s. By Lemma \ref{l27}, a.s.~$t^+\in\{1,\infty\}$. By Lemma \ref{l43} a.s.~$t^+=\infty$. Let $\omega\in \{0,1\}^{V(G)}$ be a configuration such that $(s_0,s_1,k^+,t^+)=(0,1,\infty,\infty)$.  Let $\xi_1$ be the unique infinite 1-cluster in $\omega$. Then we can find two distinct infinite contours $I_1$ and $I_2$ in $\eta_{\omega}$ satisfying
\begin{enumerate}[label=(\alph*)]
\item there exists two vertices $u,v\in \xi_1\cap V(G)$, such that there are two edges $(u,a),(v,b)\in \overline{G}$, satisfying $(u,a)^+\in I_1$, $(v,b)^+\in I_2$;
\item $\xi_1\subset P_{I_1}\cap P_{I_2}$ by Lemma \ref{l42} (3).
\item $u,v\in \xi_1$.
\end{enumerate} 
We claim that $D_{I_1}\cap D_{I_2}=\emptyset$. Indeed, if $D_{I_1}\cap D_{I_2}\neq\emptyset$, either $P_{I_1}\cap P_{I_2}=\emptyset$, which contradicts Part (b) above; or one of $P_{I_1}$ and $P_{I_2}$ is a subset of the other. Without loss of generality, assume that $P_{I_1}\subset P_{I_2}$. Then by Lemma \ref{l42}, $V_{I_1}^+\cap D_{I_1}$ is in an infinite contour of $\phi_{\omega}^+$ separating two infinite 1-clusters in $\omega$, one containing $V_{I_1}\cap P_{I_1}$, the other containing $V_{I_2}\cap P_{I_2}$. But this is a contradiction to the fact that there is a unique infinite 1-cluster.

Let $l_{uv}$ be a path joining $u$ and $v$ and consisting of edges of $G$. Define a new configuration $\tilde{\omega}\in \{0,1\}^{V(G)}$ by 
\begin{eqnarray*}
\tilde{\omega}(z)=\begin{cases}0,&\mathrm{if}\ z\in l_{uv};\\ \omega(z)&\mathrm{otherwise.}\end{cases},\ z\in V(G).
\end{eqnarray*}
 As in Lemma \ref{l42}(1), the infinite 1-cluster $\xi_1$ including $V_{I_1}\cap P_{I_1}$ splits into two infinite 1-clusters in $\tilde{\omega}$ by the path $l_{uv}$. Therefore in $\tilde{\omega}$ there exists at least 2 infinite 1-clusters. See Figure \ref{fig:cfc}.
 \begin{figure}
\includegraphics[width=.32\textwidth]{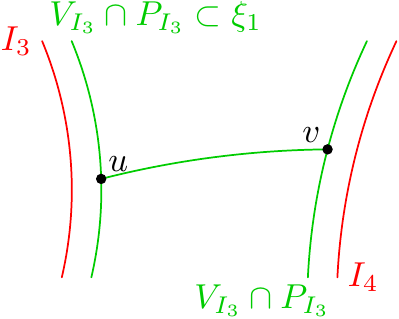}\qquad\qquad\includegraphics[width=.32\textwidth]{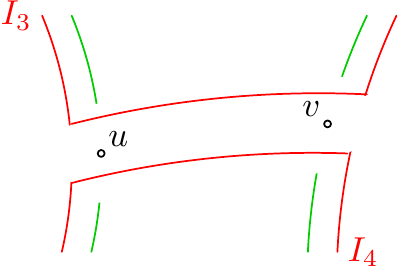}
\caption{In the left (resp.\ right) figure, Red lines represent infinite contours in $\eta_{\omega}$ (resp.\ $\eta_{\tilde{\omega}}$).  Green lines represent infinite contours in $\phi_{\omega}$ (resp.\ $\phi_{\tilde{\omega}}$).}\label{fig:cfc}
\end{figure}

  By deletion-tolerance, there exist at least 2 infinite 1-clusters with positive probability, but this is a contradiction to the assumption that $(s_0,s_1)=(0,1)$ a.s.
 Then we concluded that in this case $(s_0,s_1,k^+)=(0,1,0)$ a.s.
\end{proof}

\subsection{$(s_0,s_1)=(0,0)$, a.s.}\label{ss00} Then $k=0$ a.s.  By Lemma \ref{l27}, $k^+=1$ a.s., hence in this case, $(s_0,s_1,k^+)=(0,0,1)$ a.s.

\subsection{$(s_0,s_1)=(0,1)$, a.s.}\label{ss01} Then $(s_0,s_1,k^+)=(0,1,0)$ a.s.~by Lemma \ref{l45}.

 \subsection{$(s_0,s_1)=(0,\infty)$, a.s.}\label{sszi} Then $k=\infty$ a.s. By Lemma \ref{l27}, either $k^+=1$ a.s., or $k^+=\infty$ a.s. By Lemma \ref{l44} we concluded that in this case $(s_0,s_1,k^+)=(0,\infty,\infty)$ a.s. 
 
\subsection{$(s_0,s_1)=(1,0)$, a.s.} Then $(s_0,s_1,k^+)=(1,0,0)$ a.s.~by Lemma \ref{l45}.

\subsection{$(s_0,s_1)=(\infty,0)$, a.s.} Using the same arguments as in \ref{sszi} and deletion-tolerance, we obtain that in this case $(s_0,s_1,k^+)=(\infty,0,\infty)$ a.s.

\subsection{$(s_0,s_1)=(\infty,\infty)$ a.s.}\label{ssff} Then $k=\infty$ a.s. By Lemma \ref{l27}, either $k^+=1$ a.s., or $k^+=\infty$ a.s.
By Lemma \ref{l44} we concluded that in this case $(s_0,s_1,k^+)=(\infty,\infty,\infty)$ a.s. 

Then the theorem follows from Sections \ref{ss00}-\ref{ssff}.

\section{Ising Percolation }\label{p123}

In this section, we prove Theorems \ref{ipl} and \ref{tm3} about percolation properties of the Ising model on the hyperbolic plane. We shall compare the Ising measure with the Bernoulli($p$) percolation measure, and obtain stochastic domination result to study infinite ``$+$''-clusters and infinite ``$-$''-clusters in the Ising configuration.

\begin{lemma}\label{l51}Let $G$ be an infinite, connected, vertex-transitive graph with finite vertex-degree $d$. Let $0\leq p_2<p_1\leq 1$, and let $J\geq 0$. Let $\nu_1$ (resp.\ $\nu_2$) be the probability measure for the i.i.d.~Bernoulli site percolation on $G$ in which each vertex takes the value ``$+$'' with probability $p_1$ (resp.\ $p_2$) and the value ``$-$'' with probability $1-p_1$ (resp.\ $1-p_2$) satisfying 
\begin{eqnarray*}
p_2<\frac{e^{-dJ}}{e^{dJ}+e^{-dJ}}<\frac{e^{dJ}}{e^{dJ}+e^{-dJ}}<p_1
\end{eqnarray*}
Let $\mu^+$ (resp.\ $\mu^-$) be the probability measure for the Ising model on $G$ with coupling constant $J$ on each edge and ``$+$'' boundary conditions (resp.\ ``$-$'' boundary conditions). Let $\mu$ be an arbitrary $\mathrm{Aut}(G)$-invariant probability measure for the Ising model on $G$ with coupling constant $J$.  Then we have
\begin{eqnarray*}
\nu_2\prec \mu^-\prec \mu\prec \mu^+\prec \nu_1.
\end{eqnarray*}
\end{lemma}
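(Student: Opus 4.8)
The plan is to prove the chain of stochastic dominations by establishing each link separately, using the Holley inequality (Lemma \ref{hln}) on finite boxes and then passing to the infinite-volume limit. The symmetric links $\nu_2 \prec \mu^-$ and $\mu^+ \prec \nu_1$ are the substantive ones; the middle link $\mu^- \prec \mu \prec \mu^+$ is the classical fact that the free (or any) Ising Gibbs measure is sandwiched between the minus and plus states, which follows from monotonicity of boundary conditions in the ferromagnetic Ising model (FKG / Holley on finite volumes with $J\ge 0$), so I would either cite this or sketch it in one line. Note that to apply Holley one must compare measures on the same state space; I would identify the spin space $\{\pm 1\}^{V}$ with $\{0,1\}^{V}$ via $+1 \leftrightarrow 1$, so that ``$+$'' is the large value and increasing events are those favored by more pluses.

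For the link $\mu^+ \prec \nu_1$: work on a finite subgraph $G_0 = (V_0, E_0)$ exhausting $G$, with ``$+$'' boundary condition on $\partial G_0$. The Ising measure $\mu^+_{G_0}$ has the form $\mu^+_{G_0}(\sigma) \propto \exp(J\sum_{e=(u,v)} \sigma_u \sigma_v)$ with the boundary spins pinned to $+1$. I would verify the Holley condition (\ref{hli}) comparing $\mu_1 = \mu^+_{G_0}$ against $\mu_2 = \nu_1$ restricted to $V_0$. Since $\nu_1$ is a product measure, the Holley criterion reduces to a pointwise (single-site conditional) inequality: for every vertex $v$ and every configuration $\xi$ on $V_0 \setminus \{v\}$,
\begin{eqnarray*}
\nu_1(\sigma_v = 1 \mid \xi) \geq \mu^+_{G_0}(\sigma_v = 1 \mid \xi).
\end{eqnarray*}
The left side equals $p_1$ regardless of $\xi$. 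The right side is $\frac{e^{Jm}}{e^{Jm}+e^{-Jm}}$ where $m = m(\xi,v)$ is the sum of the spins (in $\{\pm 1\}$) at the neighbors of $v$ (boundary neighbors counted as $+1$); since $v$ has degree $d$ we have $m \le d$, so $\frac{e^{Jm}}{e^{Jm}+e^{-Jm}} \le \frac{e^{Jd}}{e^{Jd}+e^{-Jd}} < p_1$ by hypothesis. This gives $\mu^+_{G_0} \prec \nu_1|_{V_0}$ for each $G_0$; taking the weak limit along the exhaustion preserves stochastic domination (increasing events are limits of increasing cylinder events), yielding $\mu^+ \prec \nu_1$. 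The link $\nu_2 \prec \mu^-$ is obtained by the mirror-image argument: compare $\nu_2$ against $\mu^-_{G_0}$, reduce to $p_2 \le \frac{e^{-Jm}}{e^{Jm}+e^{-Jm}}$ uniformly, and use $m \ge -d$ together with the hypothesis $p_2 < \frac{e^{-dJ}}{e^{dJ}+e^{-dJ}}$; then pass to the limit.

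The main obstacle, and the only place requiring care, is the single-site conditional computation: one must check that the worst case over the neighboring configuration $\xi$ is exactly the all-$+$ (resp.\ all-$-$) neighborhood, which is where the vertex degree $d$ enters and why the strict inequalities in the hypothesis are the right ones. A secondary technical point is justifying that weak convergence of the finite-volume Ising measures to $\mu^+$ (resp.\ $\mu^-$) preserves $\prec$, and that an \emph{arbitrary} $\mathrm{Aut}(G)$-invariant Gibbs measure $\mu$ indeed satisfies $\mu^- \prec \mu \prec \mu^+$ — this last uses that every Gibbs measure, being a mixture of configurations whose conditional distributions on finite sets are the finite-volume Ising specifications, is dominated in each finite window according to boundary monotonicity, and then one takes limits. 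None of these steps involves more than routine monotonicity arguments once the single-site inequality is in hand.
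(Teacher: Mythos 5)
Your proposal is correct and follows essentially the same route as the paper: a finite-volume comparison via the Holley inequality (Lemma \ref{hln}) on exhausting boxes, followed by passing to the weak limit, with your single-site conditional computation simply making explicit the F.K.G.\ lattice-condition verification (and the sandwich $\mu^-\prec\mu\prec\mu^+$) that the paper leaves implicit. One cosmetic remark: in the $\nu_2\prec\mu^-$ step the uniform bound should be written as $\mu^-_{G_0}(\sigma_v=+1\mid\xi)=\frac{e^{Jm}}{e^{Jm}+e^{-Jm}}\geq\frac{e^{-dJ}}{e^{dJ}+e^{-dJ}}>p_2$ for $m\geq -d$, which is the quantity your argument in fact uses.
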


\begin{proof}

Fix a face $F_0$ of $G$. Let $B_R=(V(B_R),E(B_R))$ be the finite subgraph of $G$ consisting of all the faces of $G$ whose graph distance to $F_0$ is at most $R$. Let $\nu_{1,R}$ (resp.\ $\nu_{2,R}$) be the restriction of $\nu_1$ (resp.\ $\nu_2$) on $B_R$. Let $\mu_{R}^{+}$ (resp. $\mu_{R}^{-})$ be the probability measure for the Ising model on $B_R$ with respect to the coupling constant $J$ and the ``+'' boundary condition (resp.\  the ``$-$'' boundary condition). Let $\omega_1$, $\omega_2$ be two configurations in $\{-1,1\}^{V(B_R)}$.  Then by Lemmas \ref{hln}, we can check the F.K.G.~lattice conditions below
\begin{eqnarray*}
\nu_{1,R}(\max\{\omega_1,\omega_2\})\mu_{R}^+(\min\{\omega_1,\omega_2\})\geq \nu_{1,R}(\omega_1)\mu_R^+(\omega_2)\\
\mu_{R}^-(\max\{\omega_1,\omega_2\})\nu_{2,R}(\min\{\omega_1,\omega_2\})\geq \mu_R^-(\omega_1)\nu_{2,R}(\omega_1).
\end{eqnarray*}
Then we obtain the following stochastic domination result:
\begin{eqnarray*}
\nu_{2,R}\prec\mu_R^{-}\prec \mu_R^+\prec\nu_{1,R}.
\end{eqnarray*}
Letting $R\rightarrow\infty$, then the theorem follows.
\end{proof}

\subsection{Proof of Theorem \ref{ipl}(1)}

First note that if (\ref{psh}) and (\ref{pch}) holds, then 
\begin{eqnarray}
\frac{e^{h}}{e^{h}+e^{-h}}< 1-p_c^{site}(G).\label{puh}
\end{eqnarray}

Assume that $J$ satisfies (\ref{jh}). Let $\nu_1$ (resp.\ $\nu_2$) be the probability measure for the i.i.d.~Bernoulli site percolation on $G$ in which each vertex takes the value ``$+$'' with probability $p_1$ (resp.\ $p_2$) satisfying 
\begin{eqnarray*}
&&\frac{e^{dJ}}{e^{dJ}+e^{-dJ}}<p_1<1-p_c^{site}(G)\\
&& p_c^{site}(G)<p_2<\frac{e^{-dJ}}{e^{dJ}+e^{-dJ}}
\end{eqnarray*}
and the value ``$-$'' with probability $1-p_1$ (resp.\ $1-p_2$). Such $p_1$ and $p_2$ exist by (\ref{jh}).

By Lemma $\ref{l51}$ we have $\nu_2\prec \mu^{-}\prec \mu\prec \mu^+\prec\nu_{1}$.

Since $p_2>p_c^{site}(G)$, $\nu_2$-a.s. there are infinite ``$+$''-clusters. By stochastic domination $\mu$-a.s. there are infinite ``$+$''-clusters. Similarly, by Lemma \ref{lc1} $\nu_1$-a.s.~there are infinite ``$-$''-clusters, and therefore $\nu$-a.s. there are infinite ``$-$''-clusters. By Theorem \ref{m1}, we conclude that when (\ref{jh}) hold, $\mu$-a.s. there are infinitely many infinite ``$+$''-clusters and infinitely many infinite ``$-$''-clusters in $\omega$, and infinitely many infinite contours in $\phi^+_{\omega}$. This completes the proof of Part (1).

\subsection{Proof of Theorems \ref{ipl}(2)}

We first prove Part (2)(a). Let $\omega\in \{\pm 1\}^{V(G)}$. Let $s_{+}$ (resp.\ $s_-$) be the number of infinite ``$+$''-clusters (resp.\ infinite ``$-$''-clusters) in $\omega$. Since the measure $\mu^{f}$ is ergodic and symmetric with respect to interchanging the ``$+$'' states and the ``$-$'' states, we obtain by Theorem \ref{m1} that $\mu$.a.s.~
\begin{eqnarray}
(s_+,s_-)\in\{(0,0),(\infty,\infty)\}\label{pmif}
\end{eqnarray}

Let $h$ be given by (\ref{pch}). If $|J|<\frac{h}{d}$ and $\mu^f$ is $\Aut(G)$-ergodic, then the conclusion follows form Theorem \ref{ipl}(1). Hence it suffices to prove the conclusion when $J\geq \frac{h}{d}$ and $\mu^f$ is $\Aut(G)$-ergodic. But when $J\geq \frac{h}{d}$, we can find $0<J'<\frac{h}{d}$, such that 
\begin{eqnarray*}
\mu_{J'}^f\prec\mu_{J}^f
\end{eqnarray*}
Since $\mu_{J'}^f$-a.s.~there exist infinite ``$+$''-clusters, $\mu_{J}^f$ there exist infinite ``$+$''-clusters as well. Then $\mu_J^f$-a.s.~there exist infinitely many infinite ``$+$''-clusters and infinitely many infinite ``$-$''-clusters by (\ref{pmif}). This completes the proof of Part (2)(a).

It remains to prove $(c)\Rightarrow (b)\Rightarrow (a)$.
The statement $(b)\Rightarrow (a)$ follows from Theorem 4.1 of \cite{SR01}.

The fact that $(c)\Rightarrow (a)$ follows from Theorem 3.2 (v) of \cite{HJL02}; while the fact that $(c)\Rightarrow (b)$ follows from Theorem 4.1 and Lemma 6.4 of \cite{LS99}.


\subsection{Proof of Theorem \ref{tm3}(1)}

Let $d\geq 3$ be the vertex degree of the graph $G$. Let $h>0$ be such that 
\begin{eqnarray}
p_1:=\frac{e^h}{e^h+e^{-h}}<p_c^{site}\label{hu}
\end{eqnarray}
and assume that the coupling constant $J\geq 0$ of the Ising model on $G$ satisfies
\begin{eqnarray}
0<J<\frac{h}{d}\label{ju}
\end{eqnarray}
Let $p_2=1-p_1> 1-p_c^{site}$. Let $\nu_1$ (resp.\ $\nu_2$) be the probability measure for the i.i.d.~Bernoulli site percolation on $G$ in which each vertex takes the value ``$+$'' with probability $p_1$ (resp.\ $p_2$), and takes the value ``$-$'' with probability $1-p_1$ (resp.\ $1-p_2$). By Lemma $\ref{l51}$ we have $\nu_2\prec \mu^{-}\prec \mu\prec \mu^+\prec\nu_{1}$. Since $p_1<p_c^{site}$, $\nu_1$-a.s.~there are no infinite ``$+$''-clusters, therefore $\mu^{+}$-a.s.~there are no infinite ``$+$''-clusters in the Ising model. Since $\mu^-\prec\mu^+$, $\mu^-$-a.s.~there are no infinite ``$+$''-clusters either. By symmetry we obtain that $\mu^{+}$-a.s.~there are neither infinite ``$+$"-clusters nor infinite ``$-$''-clusters; and $\mu^{-}$-a.s.~there are neither infinite ``$+$"-clusters nor infinite ``$-$''-clusters either.

  Let
\begin{eqnarray*}
p:=1-e^{-2J}>0;
\end{eqnarray*}
By the coupling of the Ising model and the random cluster model in Lemma \ref{l213}, each infinite 1-cluster in the random cluster representation of the Ising model must be a subset of an infinite (``$+$'' or ``$-$'') cluster in the Ising configuration. Since $\mu^{+}$-a.s.~there are neither infinite ``$+$"-clusters nor infinite ``$-$''-clusters; and $\mu^{-}$-a.s.~there are neither infinite ``$+$"-clusters nor infinite ``$-$''-clusters, by Lemma \ref{l213}(B), $WRC_{p,2}$-a.s.~there are no infinite 1-clusters in the random cluster representation of the Ising model. Since $p>0$, we obtain that $p_{c,2}^w\geq p>0$. Taking supreme over all the $(j,h)$'s satisfying (\ref{hu}), (\ref{ju}), we obtain (\ref{pcwl}).

\subsection{Proof of Theorem \ref{tm3}(2)}

 By Lemma \ref{liff}, if (\ref{jj3}) holds, then there is a unique infinite-volume Gibbs measure for the Ising model on $G$ with coupling constant $J$. Since
\begin{eqnarray*}
p_{c,2}^{w}\leq p_{c,2}^{f}\leq p_{u,2}^{f},
\end{eqnarray*}
(\ref{jj3}) implies Condition (c). Then Theorem \ref{ipl} (2)(c) implies $\mu^f$ a.s.~there are infinitely many infinite ``$+$''-clusters and infinitely many ``$-$''-clusters.
By the uniqueness of the infinite-volume Gibbs measure, there are infinitely many infinite ``$+$''-clusters and infinitely many ``$-$''-clusters under any infinite-volume Gibbs measure for the Ising model. But there are no infinite 1-clusters in the random cluster representation of the Ising model by (\ref{wrc}). 

\section{XOR Ising Percolation}\label{p5}

In this section, we prove Theorems \ref{t16} and \ref{xorc} about percolation properties of the XOR Ising model in the hyperbolic plane. In the proof of Theorem \ref{t16},  we shall again compare the XOR Ising measure with the Bernoulli($p$) percolation measure, and obtain stochastic domination result to study infinite ``$+$''-clusters and infinite ``$-$''-clusters in the XOR Ising configuration. In the proof of Theorem \ref{xorc}, we shall apply the planar duality of the XOR Ising model, which was first proved for the XOR Ising model on the Euclidean square grid in \cite{BT12}.

\begin{lemma}\label{l61}Let $G$ be an infinite, connected, vertex-transitive graph with finite vertex-degree $d$. Let $0\leq p_2<p_1\leq 1$, and let $J\geq 0$. Let $\nu_1$ (resp.\ $\nu_2$) be the probability measure for the i.i.d.~Bernoulli site percolation on $G$ in which each vertex takes the value ``$+$'' with probability $p_1$ (resp.\ $p_2$) and the value ``$-$'' with probability $1-p_1$ (resp.\ $1-p_2$) satisfying 
\begin{eqnarray}
p_2<\frac{2}{(e^{dJ}+e^{-dJ})^2}<\frac{e^{2dJ}+e^{-2dJ}}{(e^{dJ}+e^{-dJ})^2}<p_1\label{xoc}
\end{eqnarray}
 Let $\mu$ be an arbitrary $\mathrm{Aut}(G)$-invariant probability measure for the Ising model on $G$ with coupling constant $J$.  Then we have
\begin{eqnarray*}
\nu_2\prec[ \mu\times \mu]\prec \nu_1.
\end{eqnarray*}
\end{lemma}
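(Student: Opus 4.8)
The plan is to mimic the proof of Lemma \ref{l51}, replacing the single Ising measure by the product measure $\mu\times\mu$ of two i.i.d.\ copies, and to check the Holley/FKG lattice condition for the XOR-induced site percolation against the two Bernoulli measures $\nu_1$ and $\nu_2$. The key point is that the relevant quantity controlling whether a vertex $v$ has spin $+1$ in the XOR configuration $\sigma_{XOR}=\sigma_1\sigma_2$, conditioned on the worst-case configuration on the rest of the graph, can be bounded using only the local interaction at $v$. As in Lemma \ref{l51} I would first work on the finite truncation $B_R$ (the union of faces within graph distance $R$ of a fixed face $F_0$) and then pass to the limit $R\to\infty$.

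First I would make precise what ``the XOR percolation'' means as an element of $\{\pm1\}^{V}$: it is the push-forward of $\mu_R\times\mu_R$ under the map $(\sigma_1,\sigma_2)\mapsto(\sigma_1(v)\sigma_2(v))_{v}$, where $\mu_R$ is any $\Aut(G)$-invariant Ising measure restricted to $B_R$ (with the appropriate boundary condition). Then I would compute, for a vertex $v$ with neighbors $w_1,\dots,w_d$, the conditional probability that $\sigma_{XOR}(v)=+1$ given the spins $\sigma_1,\sigma_2$ everywhere except at $v$. Summing over the $2\times 2$ choices of $(\sigma_1(v),\sigma_2(v))$ with Ising weights $e^{\pm J\sum \sigma_i(w_j)}$, one finds that this conditional probability is a ratio whose numerator is a sum of products of the form $e^{J(\pm m_1)}e^{J(\pm m_2)}$ with the two signs equal (for $\sigma_{XOR}(v)=+1$) versus opposite, where $m_i=\sum_j\sigma_i(w_j)\in[-d,d]$. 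Since $e^{Jm}\in[e^{-dJ},e^{dJ}]$, the worst cases give exactly the pinching in (\ref{xoc}): the conditional probability of $+1$ at $v$ is bounded above by $\frac{e^{2dJ}+e^{-2dJ}}{(e^{dJ}+e^{-dJ})^2}$ and below by $\frac{2}{(e^{dJ}+e^{-dJ})^2}$, uniformly in the surrounding configuration.

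With these bounds in hand, I would verify the two Holley lattice conditions on $\{-1,1\}^{V(B_R)}$, one comparing $\mu_R\times\mu_R$ to $\nu_{1,R}$ from above and one comparing it to $\nu_{2,R}$ from below, exactly in the shape used in Lemma \ref{l51}:
\begin{eqnarray*}
\nu_{1,R}(\max\{\omega_1,\omega_2\})\,(\mu_R\times\mu_R)(\min\{\omega_1,\omega_2\})&\geq& \nu_{1,R}(\omega_1)\,(\mu_R\times\mu_R)(\omega_2),\\
(\mu_R\times\mu_R)(\max\{\omega_1,\omega_2\})\,\nu_{2,R}(\min\{\omega_1,\omega_2\})&\geq& (\mu_R\times\mu_R)(\omega_1)\,\nu_{2,R}(\omega_2),
\end{eqnarray*}
which reduce, after cancellation, to one-site inequalities between the single-site conditional $+1$-probabilities and the Bernoulli parameters $p_1,p_2$; these hold precisely because of (\ref{xoc}). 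Lemma \ref{hln} then yields $\nu_{2,R}\prec \mu_R\times\mu_R\prec \nu_{1,R}$, and letting $R\to\infty$ gives the claim.

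The main obstacle I anticipate is twofold. First, the product measure $\mu\times\mu$ is not itself an Ising-type (monotone, single-spin-update) measure, so one must be careful that the relevant FKG-type lattice condition genuinely reduces to a single-site check; concretely, one should verify that the conditional law of $\sigma_{XOR}(v)$ given the rest depends on the rest only through the pair $(m_1,m_2)$ of signed neighbor-sums and is monotone enough for Holley to apply — or, alternatively, realize $\mu\times\mu$ on the disjoint union $G\sqcup G$ (which is still vertex-transitive of degree $d$) as an honest Ising measure and then observe that $\sigma_{XOR}$ is a deterministic function pairing up the two copies, so that the monotone structure is inherited. Second, one must ensure the direction of monotonicity in the XOR map is correct, since flipping $\sigma_2$ flips $\sigma_{XOR}$; this is handled by using the $\pm$-symmetry of the Ising measure $\mu$, so that $\mu\times\mu$ is invariant under $\sigma_2\mapsto-\sigma_2$ and the XOR configuration has a well-defined symmetric law to which Holley applies on both sides. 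Once these points are settled, the computation of the extremal single-site probabilities giving the constants in (\ref{xoc}) is routine.
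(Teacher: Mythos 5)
Your proposal is correct and follows essentially the same route as the paper: finite-volume approximation on $B_R$, the uniform pinch of the single-site conditional probability of $\sigma_{XOR}(v)=+1$ between $\frac{2}{(e^{dJ}+e^{-dJ})^2}$ and $\frac{e^{2dJ}+e^{-2dJ}}{(e^{dJ}+e^{-dJ})^2}$, verification of the Holley lattice conditions against the product measures $\nu_{1,R}$ and $\nu_{2,R}$, and passage to the limit $R\to\infty$. Your treatment of the conditioning (first on $(\sigma_1,\sigma_2)$ off $v$, then noting the bounds survive averaging to the coarser XOR conditioning) is in fact more explicit than the paper's own write-up, and the monotonicity worries you raise are not needed once one observes that domination against a Bernoulli product measure only requires these uniform one-site conditional bounds.
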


\begin{proof}

Fix a face $F_0$ of $G$. Let $B_R=(V(B_R),E(B_R))$ be the finite subgraph of $G$ consisting of all the faces of $G$ whose graph distance to $F_0$ is at most $R$. Let $\nu_{1,R}$ (resp.\ $\nu_{2,R}$) be the restriction of $\nu_1$ (resp.\ $\nu_2$) on $B_R$.  Let $\mu_R$ be a probability measure for an Ising model on $B_R$ with coupling constant $J$ and certain boundary conditions such that $\lim_{R\rightarrow\infty}\mu_R=\mu$. Let $\sigma_{XOR}=\sigma_1\sigma_2$. Then for each $v\in V(B_R)$,
\begin{eqnarray*}
&&\mu_{R}\times \mu_{R}(\sigma_{XOR}(v)=``+")\\
&=&\mu_{R}(\sigma_1=``+")\mu_{R}(\sigma_2=``+")+\mu_{R}(\sigma_1=``-")\mu_{R}(\sigma_2=``-")\\
&&\mu_{R}\times \mu_{R}(\sigma_{XOR}(v)=``-")\\
&=&\mu_{R}(\sigma_1=``+")\mu_{R}(\sigma_2=``-")+\mu_{R}(\sigma_1=``-")\mu_{R}(\sigma_2=``+")
\end{eqnarray*}
Hence we have
\begin{eqnarray*}
\frac{2}{(e^{dJ}+e^{-dJ})^2}\leq \mu_{R}\times \mu_{R}(\sigma_{XOR}(v)=``+")\leq \frac{e^{2dJ}+e^{-2dJ}}{(e^{dJ}+e^{-dJ})^2}\\
\frac{2}{(e^{dJ}+e^{-dJ})^2}\leq \mu_{R}\times \mu_{R}(\sigma_{XOR}(v)=``-")\leq \frac{e^{2dJ}+e^{-2dJ}}{(e^{dJ}+e^{-dJ})^2}
\end{eqnarray*}

 Let $\omega_1$, $\omega_2$ be two configurations in $\{-1,1\}^{V(B_R)}$.  By (\ref{xoc}), we can check the F.K.G.~lattice conditions below
\begin{eqnarray*}
&&\nu_{1,R}(\max\{\omega_1,\omega_2\})[\mu_R\times\mu_R](\min\{\omega_1,\omega_2\})\geq \nu_{1,R}(\omega_1)[\mu_{R}\times\mu_R](\omega_2)\\
&&[\mu_R\times\mu_R](\max\{\omega_1,\omega_2\})\nu_{2,R}(\min\{\omega_1,\omega_2\})\geq[\mu_R\times\mu_R](\omega_1)\nu_{2,R}(\omega_1).
\end{eqnarray*}
Then by Lemmas \ref{hln} we obtain the following stochastic domination result:
\begin{eqnarray*}
\nu_{2,R}\prec[\mu_R\times\mu_R]\prec\nu_{1,R}.
\end{eqnarray*}
Letting $R\rightarrow\infty$, then the theorem follows.
\end{proof}

\subsection{Proof of Theorem \ref{t16}}

First note that if (\ref{psh}) and (\ref{pch1}) holds, then 
\begin{eqnarray}
\frac{e^{2h}+e^{-2h}}{(e^{h}+e^{-h})^2}=1-p_c^{site}(G).\label{puh1}
\end{eqnarray}

Assume that $J$ satisfies (\ref{jh}). Let $\nu_1$ (resp.\ $\nu_2$) be the probability measure for the i.i.d.~Bernoulli site percolation on $G$ in which each vertex takes the value ``$+$'' with probability $p_1$ (resp.\ $p_2$) satisfying 
\begin{eqnarray*}
&&\frac{e^{2dJ}+e^{-2dJ}}{(e^{dJ}+e^{-dJ})^2}<p_1<1-p_c^{site}(G)\\
&& p_c^{site}(G)<p_2<\frac{2}{(e^{dJ}+e^{-dJ})^2}
\end{eqnarray*}
and the value ``$-$'' with probability $1-p_1$ (resp.\ $1-p_2$). Such $p_1$ and $p_2$ exist by (\ref{jh}).

By Lemma $\ref{l51}$ we have $\nu_2\prec [\mu\times\mu]\prec\nu_{1}$.

Since $p_2>p_c^{site}(G)$, $\nu_2$-a.s. there are infinite ``$+$''-clusters. By stochastic domination $[\mu\times\mu]$-a.s. there are infinite ``$+$''-clusters in the XOR Ising configuration. Similarly, by Lemma \ref{lc1} $\nu_1$-a.s.~there are infinite ``$-$''-clusters, and therefore $\nu$-a.s. there are infinite ``$-$''-clusters. By Theorem \ref{m1}, we conclude that when (\ref{jh}) hold, $\mu$-a.s. there are infinitely many infinite ``$+$''-clusters and infinitely many infinite ``$-$''-clusters in $\sigma_{XOR}$, and infinitely many infinite contours in $\phi^+_{\sigma_{XOR}}$. This completes the proof of the Theorem.  $\hfill\Box$

\subsection{Proof of Theorem \ref{xorc}} 

Let $\Lambda=(V_{\Lambda},E_{\Lambda})$ be a subgraph of $G^+$ consisting of faces of $G^+$. Let $\Lambda_{*}=(V_{\Lambda_*},E_{\Lambda_*})$ be the dual graph of $\Lambda$, such that there is a vertex in $V_{\Lambda}$ corresponding to each face in $\Lambda$, as well as the unbounded face; the edges in $E_{\Lambda}$ and $E_{\Lambda_*}$ are in 1-1 correspondence by duality.
 
 Consider an XOR Ising model on $\Lambda$ with respect to two i.i.d.~Ising models $\sigma_3$, $\sigma_4$ with free boundary conditions and coupling constants $J\geq 0$ satisfying (\ref{pch1}) and (\ref{jh1}). The partition function of the XOR Ising model can be computed by
 \begin{eqnarray*}
 Z_{\Lambda,f}=\sum_{\sigma_3,\sigma_4\in \{\pm 1\}^{V_{\Lambda}}}\prod_{(u,v)\in E_{\Lambda}}e^{J(\sigma_{3,u}\sigma_{3,v}+\sigma_{4,u}\sigma_{4,v})}.
 \end{eqnarray*}
 
 Following the same computations as in \cite{BT12}, we obtain
 \begin{eqnarray}
 Z_{\Lambda,f}= C_1\sum_{P_*\in \mathcal{P}_{*},P\in \mathcal{P},P\cap P_*=\emptyset}\left(\frac{2 e^{-2J}}{1+e^{-4J}}\right)^{|P_*|}\left(\frac{1-e^{-4J}}{1+e^{-4J}}\right)^{|P|}.\label{zlf}
 \end{eqnarray}
 where $\mathcal{P}_*$ (resp.\ $\mathcal{P}$) consists of all the contour configurations on $E_{\Lambda_*}$ (resp.\ $E_{\Lambda}$) such that each vertex of $V_{\Lambda_*}$  (resp.\ $V_{\Lambda}$) has an even number of incident present edges, and $C_1=2^{|V_{\Lambda}|-|E_{\Lambda}|+2}(e^{2J}-e^{-2J})^{|E_{\Lambda}|}$ is a constant. 
 
 When $J,K$ satisfies (\ref{jkr}), we have
 \begin{eqnarray*}
 \frac{2e^{-2J}}{1+e^{-4J}}&=&\frac{1-e^{-4K}}{1+e^{-4K}};\\
 \frac{2e^{-2K}}{1+e^{-4K}}&=&\frac{1-e^{-4J}}{1+e^{-4J}}.
 \end{eqnarray*}
Thus the partition function $Z_{\Lambda,f}$, up to a multiplicative constant, is the same as the partition function of the XOR Ising model on $\Lambda_*$ with coupling constant $K$. 

Recall that there is exactly one vertex $v_{\infty}\in V_{\Lambda_{*}}$ corresponding to the unbounded face in $\Lambda$. The XOR Ising model $\sigma_{XOR}=\sigma_1\sigma_2$ on $\Lambda_*$, corresponds to an XOR Ising model on $\Lambda_{*}\setminus \{v_{\infty}\}$ (which is a subgraph of $G$) with the boundary condition that all the boundary vertices have the same state in $\sigma_1$ and all the boundary vertices have the same state in $\sigma_2$. Since $\sigma_1$ and $\sigma_2$ are i.i.d., the boundary condition must be one of the following two cases:
\begin{enumerate}
\item ``$+$'' boundary condition in $\sigma_1$ and ``$+$'' boundary condition in $\sigma_2$, denoted by ``$++$'' boundary condition for the XOR Ising model;
\item ``$-$'' boundary condition in $\sigma_1$ and ``$-$'' boundary condition in $\sigma_2$, denoted by ``$--$'' boundary condition for the XOR Ising model.
\end{enumerate}

 Note that each one of the 2 possible boundary conditions gives the same distribution of contours in the XOR Ising model. From the expression (\ref{zlf}), we can see that there is a natural probability measure on the set of contours $\Phi=\{(P,P_*):P\in\mathcal{P},P_*\in \mathcal{P}_*,P\cap P_*=\emptyset\}$, such that the probability of each pair of contours $(P,P_*)\in \Phi$ is proportional to $\left(\frac{2 e^{-2J}}{1+e^{-4J}}\right)^{|P_*|}\left(\frac{1-e^{-4J}}{1+e^{-4J}}\right)^{|P|}$, and the marginal distribution on $\mathcal{P}$ is the distribution of contours for the XOR Ising model on $G$ with coupling constant $K$ and free boundary conditions, while the marginal distribution on $\mathcal{P}_*$ is the distribution of contours for the XOR Ising model on $G^+$ with coupling constant $J$ and free boundary conditions. 

We let $\Lambda$ and $\Lambda_*\setminus \{v_{\infty}\}$ increase and approximate the graph $G^+$ and $G$, respectively. If with a positive $\mu^+\times \mu^+$ probability, there exists exactly one infinite contour $C$ consisting of edges of $G^+$ for the XOR Ising model on $G$ with coupling constant $K$, then $\mu^f\times \mu^f$-a.s. there exists an infinite cluster consisting of vertices of  $G^+$ containing all the vertices in $C$, since contours in $G$ and $G^+$ are disjoint. Consider the XOR Ising spin configuration $\sigma_{XOR}=\sigma_1\sigma_2$ as a site percolation on $G^+$, with scenery given by contour configurations in $\{0,1\}^{E(G^+)}$ within the ``$+$'' clusters of $\sigma_{XOR}$. In the notation of Definition \ref{idd}, $Q=\{0,1\}$, and $X=E(G^+)$. An edge in $E(G^+)$ is present (has state ``1'') if and only if both of its endpoints are in a ``$+$''-cluster of the XOR Ising configuration on $G^+$ and the edge itself present in the contour configuration of the XOR Ising model on $G$. This way we obtain an automorphism-invariant and insertion-tolerant percolation with scenery. Let $\mathcal{A}\subset 2^{V(G^+)}\times 2^{V(G^+)}\times 2^{E(G^+)}$ be the triple $(C,\omega, q)$ such that 
\begin{itemize}
\item $\omega$ is an XOR Ising spin configuration on $G^+$; and
\item $C$ is an infinite ``$+$''-cluster in $\omega$; and
\item $q$ is the $G^+$-contour configuration for the XOR Ising model on $G$ such that each contour is within  a ``$+$''-cluster of $\omega$; and
\item $C$ contains an infinite contour in $q$.
\end{itemize}
We can see that $\mathcal{A}$ is invariant under diagonal actions of automorphisms. 
By Theorem \ref{t16}, $\mu^f\times\mu^f$-a.s. there exists infinitely many infinite ``$+$''-clusters in $\omega$. By Proposition \ref{idc}, either all the infinite clusters are in $\mathcal{A}$, or no infinite clusters are in $\mathcal{A}$. Similar arguments applies for ``$-$''-clusters in $\omega$. Hence almost surely the number of infinite contours in $G^+$ is 0 or $\infty$. Since the distribution of infinite contours in $G^+$ is exactly that of contours for the XOR Ising model on $G$ with coupling constant $K$ and $++$ (or $--$) boundary condition, we obtain that
\begin{eqnarray*}
\mu^+\times \mu^+(k^+\in\{0,\infty\})=\mu^-\times \mu^-(k^+\in\{0,\infty\})=1.
\end{eqnarray*}
The identity $\mu^f\times \mu^f(k^+\in\{0,\infty\})=1$ can be proved in a similar way.
  $\hfill\Box$




\bigskip
\noindent\textbf{Acknowledgements.} Z.L.'s research is supported by National Science Foundation grant 1608896 and Simons Collaboration Grant 638143.

\bibliography{perc}
\bibliographystyle{amsplain}
\end{document}